\documentclass[a4paper,reqno]{article}
\usepackage[english]{babel}
\usepackage{a4wide}
\usepackage{graphicx}
\usepackage{amsmath,amssymb,amsthm, amsgen,mathrsfs}
\usepackage{stmaryrd}
\usepackage[latin1]{inputenc}
\usepackage{listings}
\usepackage{color}
\usepackage[usenames,dvipsnames]{xcolor}
\usepackage{rotating}
\usepackage{hhline}
\usepackage{subfigure}
\usepackage{multirow}
\usepackage{enumerate}
\usepackage{enumitem}
\usepackage{dsfont}
\usepackage{tikz} 
\usepackage{url}
\usepackage{mathtools}
\usepackage{algorithm}
\usepackage{algorithmic}
\usepackage[normalem]{ulem}
\usepackage{dsfont}

\numberwithin{equation}{section}
\newtheorem{thm}{Theorem}[section]
\newtheorem{prop}[thm]{Proposition}
\newtheorem{lem}[thm]{Lemma}

\newtheorem{cor}[thm]{Corollary}
\newtheorem{defn}[thm]{Definition}

\theoremstyle{definition}
\newtheorem{rem}[thm]{Remark}

\newcommand{\R}{\mathbb R}

\newcommand{\N}{\mathbb N}
\renewcommand{\P}{\mathbb P}
\newcommand{\Q}{\mathbb Q}

\newcommand{\E}{\mathbb E}
\newcommand{\e}{\varepsilon}  
\newcommand{\ig}{[\![}
\newcommand{\id}{]\!]}

\newcommand{\Dopp}{D^{\operatorname{opp}}}
\newcommand{\Dsame}{D^{\operatorname{same}}}

\newcommand{\dist}{\operatorname{dist}}

\newcommand{\SB}{SqB_0}
\newcommand{\SC}{SC}

\newcommand{\dd}{{\rm d}}

\newcommand{\bK}{\mathbf K}

\newcommand{\bL}{\mathbf L}

\newcommand{\PP}{\mathbb{P}}

\newcommand{\cE}{\mathcal E}
\newcommand{\cF}{\mathcal F}
\newcommand{\cG}{\mathcal G}
\newcommand{\cH}{\mathcal H}

\newcommand{\cM}{\mathcal M}
\newcommand{\cN}{\mathcal N}

\newcommand{\cP}{\mathcal P}

\newcommand{\cS}{\mathcal S}

\newcommand{\indiq}{\hbox{\rm 1}{\hskip -2.8 pt}\hbox{\rm I}}
\newcommand{\1}{\indiq}

\newcommand{\bbrack}[1]{\left\llbracket {#1} \right\rrbracket }
\newcommand{\ov}[1]{\overline{#1}}

\long\def\comm#1{}

\usepackage[colorlinks=true, linkcolor=black, citecolor=black, urlcolor=black]{hyperref}

\title{Strong well-posedness of a singular SDE for signed Coulomb particles}
\author{Patrick van Meurs\thanks{Faculty of Mathematics and Physics, Kanazawa University, Kakuma, Kanazawa 920-1192, Japan.
e-mail: pjpvmeurs@staff.kanazawa-u.ac.jp} 
\
and Yoan Tardy\thanks{CMAP, \'Ecole Polytechnique, Route de Saclay, 92160 Palaiseau,
Paris. e-mail: yoan.tardy@polytechnique.edu}
}
\date{}

\begin{document} 

\maketitle

\comm{Additional comments that we want to keep for now, but which we remove later. The black text should be self-consistent when the orange comments are removed.}

\begin{abstract}
We consider an SDE system for signed Coulomb particles moving in $\mathbb R^2$. Due to the singular Coulomb interaction force, collisions between particles of opposite sign will happen in finite time. Upon collision, the colliding particles are removed from the system. Our main results are the existence and uniqueness of strong global solutions and the characterization of all possible collisions. The challenge of the proofs is to deal with the singularity of the interactions. We overcome this by using scaling invariance of the process and by putting together several tools from \cite{FournierTardy25} developed for the similar Keller--Segel particle system.
\\
\\
\textbf{Keywords} \ Stochastic particle system, Coulomb interactions, collisions
\\
\textbf{MSC 2020} \ 60H10, 60K35
\end{abstract}

\tableofcontents

\section{Introduction}
\label{s:intro}

\subsection{The particle system}
\label{s:intro:SC}

The particle system is formally described by the following system of SDEs:
\begin{align} \label{SDE}
  (\SC) \qquad 
  \left\{ \begin{aligned}
    &X_t^i = X^i_0 +  B_t^i + \gamma \sum_{j =1 }^N b^i b^j \int_0^t f(X_s^i - X_s^j) \dd s, && t > 0, \ i = 1,\ldots,N \\
    &\text{+ collision rule},
    &&
  \end{aligned} 
  \right.
\end{align}
where we describe the `collision rule' further below, `$\SC$' stands for signed Coulomb particle system, $N \geq 1$ is the number of particles, the process $(X^i_t)_{t\ge 0}$ represents the position in $\R^2$ of particle $i \in \bbrack{1,N}$, $(B^1_t,\dots, B^N_t)_{t\ge 0}$ is a standard $2N$-dimensional Brownian motion, $\gamma > 0$ is a fixed parameter which determines the strength of the interactions, $b^i \in \{-1,1\}$ is a fixed parameter that represents the sign of particle $i$, and 
\begin{equation*}
  f : \R^2 \to \R^2, \qquad
  f(x) = \frac x{|x|^2}, \qquad f(0) := 0
\end{equation*}
is the Coulomb interaction force. Figure \ref{fig:X} shows a schematic of the particle configuration $X_t = (X_t^1, \ldots, X_t^N) \in (\R^2)^N$. We impose that the initial positions $( X^i_0 )_{i=1}^N$ are distinct. The choice $f(0) = 0$ is made for convenience to exclude self-interaction.

\begin{figure}[h]
\centering
\begin{tikzpicture}[scale=1]
    \def \r {0.2}
    \def \lgray {black!20!white}

   \foreach \x/\y in {0/0, 1/.9, -.9/.2, 1.2/.1}{
     \begin{scope}[shift={(\x,\y)}] 
     \filldraw[draw=red, fill=white] (0, 0) circle (\r); 
     \draw[red] (-.7*\r, 0) -- (.7*\r, 0);
     \draw[red] (0, -.7*\r) -- (0, .7*\r);
     \end{scope}
   }  

   \draw[red] (1.4,.1) node[right]{$X_t^i, \ b^i = 1$};  
   \draw[blue] (.7,-.55) node[below, right]{$X_t^j, \ b^j = -1$};  
   
   \foreach \i in {0,120,240}{
     \begin{scope}[rotate=\i] 
     \begin{scope}[shift={(0,.7)}, rotate=-\i] 
       \filldraw[draw=blue, fill=white] (0, 0) circle (\r); 
       \draw[blue] (-.7*\r, 0) -- (.7*\r, 0);
     \end{scope}
     \end{scope}
   }               
\end{tikzpicture}
\caption{Snapshot of $X_t = (X_t^1, \ldots, X_t^N) \in (\R^2)^N$ with signs $b = (b^1, \ldots, b^N)$.}
\label{fig:X}
\end{figure}

The system \eqref{SDE} was recently proposed and studied in \cite{VanMeursPeletierSlangen25}. A simple interpretation of \eqref{SDE} is that of electrically charged point particles that evolve by a `velocity = noise + force' law. In actual applications, \eqref{SDE} appears as a model of vortices or as a model for dislocations in metals. We refer to \cite[Section 1.1]{VanMeursPeletierSlangen25} for a detailed description.

One interesting observation that was mentioned in \cite{VanMeursPeletierSlangen25} is that the drift in \eqref{SDE} is the negative of the gradient of the Kirchhoff-Onsager functional given by
$$
    H(X) := -\gamma \sum_{1\le i < j\le N} b^ib^j\log (|X^i-X^j|).
    $$
Then, in vector notation in $(\R^2)^N$, the SDE system in \eqref{SDE} reads as 
\begin{equation} \label{SDE:vec-form}
  \dd X_t = \dd B_t + F(X_t)\dd t,   
\end{equation}
where $F(X) := -\nabla H(X)$ with
\[
  F^i(X) = -\nabla_{X^i} H(X) = \gamma \sum_{j =1 }^N b^i b^j f(X^i - X^j).
\]
The same functional $H$ is also studied in 
\cite{LebleSerfatyZeitouni17,BoursierSerfaty24ArXiv,BoursierSerfaty25ArXiv}. There, it is considered as the energy of the Gibbs measure, and the main interest is in the behavior as $N \to \infty$.

The main interesting feature of \eqref{SDE} is that particles can collide in finite time. Indeed, the occurrence of particle collisions is equivalent to the singularity of $f$ in the drift being reached. It is a matter of modeling how the collision is handled. In \cite{VanMeursPeletierSlangen25} no particular collision rule is applied; for $\gamma$ small enough the equation \eqref{SDE} can be interpreted in a weak sense that does not break down when particles collide. Then, particles instantaneously separate upon collision.

However, for the applications to vortices and dislocations mentioned above, it is more natural to \textit{remove} particles upon collision. Indeed, vortices and dislocations can be interpreted as topological defects that cancel each other out upon collision if their signs are opposite. This motivates us to apply the following collision rule: upon a collision (possibly between more than two particles), pairs of particles of opposite sign are instantaneously removed until all particles at the collision site have the same sign. Then, the remaining particles continue to evolve according to \eqref{SDE} (with $N$ reduced). We refer to this collision rule as the \textit{removal rule}. Since particles are always removed in pairs of opposite sign, the `net charge' given by $\sum_{i=1}^N b^i$ is conserved during collisions. 

A second motivation for removing particles upon collision is that for $\gamma \geq 1$ the drift in \eqref{SDE} is too strong for accommodating separation of particles after collision; see \cite[Section 6]{FournierJourdain17}. Therefore, \cite{VanMeursPeletierSlangen25} has not considered $(\SC)$ for $\gamma \geq 1$. If one would desire to keep the particles in the system, then colliding particles would have to stick together. This would mean that their combined force on a third particle sums to $0$ (because they are of opposite sign), and vice versa, the forces on both particles induced by a third particle cancel each other out. Hence, the collided particles would have to move as an independent Brownian motion, which is effectively the same as removing them from the system.

\subsection{Aim}
\label{s:intro:aim}

Henceforth, we consider \eqref{SDE} with the removal rule described above.
To the best of our knowledge there is no global-in-time well-posedness result on \eqref{SDE} that can handle collisions. 
The goal of this paper is to establish strong well-posedness of \eqref{SDE}, and to study the kind of collisions that may occur. The main challenges to overcome are that if a collision happens at $\tau$, then we need to prove that:
\begin{enumerate}[label=(\Alph*)]
  \item \label{proA} $X_{\tau-} := \lim_{t \nearrow \tau} X_t$ exists a.s., and 
  \item \label{proB} the particles in $X_\tau$ are distinct, where $X_\tau$ is the configuration obtained after applying the collision rule to $X_{\tau-}$.
\end{enumerate}
Property \ref{proB} limits the type of collisions that can occur; it only allows for collisions for which $|n^+ - n^-| \leq 1$, where $n^+$ ($n^-$) is the number of positive (negative) colliding particles. Property \ref{proB} is not strictly necessary for continuing the evolution of $X_\tau$ after $\tau$. However, if Property \ref{proB} holds, then the drift is smooth in an open region around $X_\tau$, for which it is simple to continue the solution in the strong sense.

\subsection{Extremal cases of \eqref{SDE}}
\label{s:intro:extre}
To get a sense of the kind of collisions one may expect, we first consider the asymptotic ends of the parameter range of the interaction strength $\gamma \in (0,\infty)$. We do not claim that the corresponding systems arise from \eqref{SDE} by taking limits of $\gamma$. We interpret the system corresponding to $\gamma = 0$ as $(X^1_t)_{t\ge 0},\dots,(X^N_t)_{t\ge 0}$ being independent Brownian motions in $\R^2$. In particular, there are no collisions. The system corresponding to $\gamma = \infty$ is formally obtained by rescaling \eqref{SDE} in time to obtain a drift independent of $\gamma$, and then formally taking $\gamma \to \infty$. This results in the system of deterministic ODEs given by
\begin{equation} \label{ODE:x}
  \frac d{dt} x^i = \sum_{j =1 }^N b^i b^j f(x^i - x^j) \qquad \text{for } t \in (0, \tau), \ i = 1,\ldots,N,
\end{equation}
where $\tau \geq 0$ is the first collision time.
This system is of interest on its own right, e.g.\ as a model for vortices or dislocations.
While it is known that Property \ref{proA} holds for \eqref{ODE:x} (see e.g.\ \cite{BethuelOrlandiSmets07}), Property \ref{proB} fails for certain initial conditions (to appear in \cite{VanMeursMurokawaOshikawaXX}), which shows that there is no natural unique manner to continue the solution after collision. 
To demonstrate the complexity of collisions of \eqref{ODE:x}, we give two examples. First, for the circular configuration in Figure \ref{fig:spec:X}(a), all particles will collide in finite time at the center point. The choice of the radius is irrelevant, because solutions to \eqref{ODE:x} satisfy the following scaling invariance: if $x(t)$ is a solution, then $y(t) := \frac1L x(L^2 t)$ is too for any $L > 0$.
Also, the number of particles $N$ is irrelevant; $N$ only needs to be even. Hence, this example shows that there is no limit to the number of particles that can collide for \eqref{ODE:x}. Second, the configuration in Figure \ref{fig:spec:X}(b) shows a stationary point of \eqref{ODE:x}. By the scaling invariance it follows that particles can get arbitrarily close without colliding. 

\begin{figure}[h]
\centering
\begin{tikzpicture}[scale=1]
    \def \r {.2}
    
    \draw (0,0) circle (1);
    
    \foreach \i in {0,...,3}{
    \begin{scope}[rotate = \i*90]
      \begin{scope}[shift={(1,0)}] 
      \filldraw[draw=red, fill=white] (0, 0) circle (\r); 
      \draw[red] (-.7*\r, 0) -- (.7*\r, 0);
      \draw[red] (0, -.7*\r) -- (0, .7*\r);
      \end{scope}
    \end{scope}
    }
    
    \foreach \i in {0,...,3}{
    \begin{scope}[rotate = 45 + \i*90]
      \begin{scope}[shift={(1,0)}, rotate = 45 - \i*90] 
        \filldraw[draw=blue, fill=white] (0, 0) circle (\r); 
        \draw[blue] (0,-.7*\r) -- (0,.7*\r);
      \end{scope}
    \end{scope}   
    }   
    
    \draw (0,-1-\r) node[below]{(a)};        
    
    \begin{scope}[shift={(4,-.333)}, scale = 1.333]
    \draw (0,1) -- (0.866,-.5) -- (-0.866,-.5) -- cycle;
    \foreach \i in {0,1,2}{
    \begin{scope}[rotate = 90 + \i*120]
      \draw (0,0) -- (1,0);
      \begin{scope}[shift={(1,0)}, rotate = - 90 - \i*120, scale = .75] 
      \filldraw[draw=red, fill=white] (0, 0) circle (\r); 
      \draw[red] (-.7*\r, 0) -- (.7*\r, 0);
      \draw[red] (0, -.7*\r) -- (0, .7*\r);
      \end{scope}
    \end{scope}
    }
    \begin{scope}[scale = .75]
    \filldraw[draw=blue, fill=white] (0, 0) circle (\r); 
    \draw[blue] (-.7*\r, 0) -- (.7*\r, 0);
    \draw (0,-.75-\r) node[below]{(b)}; 
    \end{scope}
    \end{scope}
\end{tikzpicture}
\caption{(a) A particle configuration with all particles equispaced on a circle with alternating signs. (b) 3 positive particles placed on an equilateral triangle with a negative particle in the center.}
\label{fig:spec:X}
\end{figure}

\subsection{Main results} 
\label{s:intro:main:results}
Our main result, Theorem \ref{t:SDE}, shows that the behavior of the SDE system \eqref{SDE} is somewhere in between the two extremal cases described in Section \ref{s:intro:extre}. In more detail,
for each $\gamma > 0$, each $N \geq 2$, each $b$ and each initial condition $X_0$ of distinct particles,  
\begin{enumerate}[label=(\alph*)]
  \item \label{thma} there exists a strong, global solution,
  \item \label{thmb} the solution is path-wise unique, 
  \item \label{thmc} each collision happens at a distinct time,
  \item \label{thmd} each collision is \textit{simple}, i.e.\ between two particles of opposite sign,
  \item \label{thme} collisions keep on happening in finite time until only particles of the same sign are left.
\end{enumerate}

Let us compare Properties \ref{thma}--\ref{thme} with the extremal cases from Section \ref{s:intro:extre}. 
With respect to the extremal case $\gamma = 0$ of independent Brownian motions, \ref{thma} and \ref{thmb} are shared, \ref{thmc} and \ref{thmd} become empty statements (because collisions do not occur), but \ref{thme} is completely different. With respect to the extremal case $\gamma = \infty$ given by the ODE system \eqref{ODE:x}, \ref{thma} and \ref{thmb} are shared
until the first multi-particle collision at which two or more particles remain at the collision site after applying the collision rule. Moreover, each of \ref{thmc}, \ref{thmd} and \ref{thme} are different due to the complicated collisions that may (or will not) occur in \eqref{ODE:x}. In particular, with \ref{thmc}--\ref{thme} on \eqref{SDE} established in this paper, we obtain a precise meaning to the \textit{instability} of stationary solutions and multi-particle collisions of \eqref{ODE:x}. Indeed, when \eqref{ODE:x} is perturbed with noise, our results show that multi-particle collisions have to break down into several simple collisions, and that stationary solutions (which only exist when both signs appear) break down and have a collision in finite time, irrespective of the size of the noise.
As a side result, we also show (Proposition \ref{p:ODE}) that the solution $x(t)$ of the ODE system \eqref{ODE:x} is left-continuous at its first collision. This was already proved in \cite{SmetsBethuelOrlandi07}, but we use a completely different, more direct, ODE-based 
proof\footnote{The proof in \cite{SmetsBethuelOrlandi07} starts from the complex parabolic Ginzburg-Landau equation with small parameter $\e$. It is shown that \eqref{ODE:x} appears in the limit $\e \to 0$.}. This proof serves as a blueprint for the proof of the left-continuity of the process $X_t$ at collision. The proof is inspired by \cite[Section 5]{Tardy24}, which we describe below in Sections \ref{s:intro:lit} and \ref{s:ODE:ex}.

\subsection{Building blocks of the proof} 
\label{s:intro:proof}

We prove Theorem \ref{t:SDE} by iterating over the collision times. Then, the main challenge is to establish Properties \ref{proA} and \ref{proB} from Section \ref{s:intro:aim} at the first collision time, which we do in Proposition \ref{p:cty:tau} and Theorem \ref{t:3plus}. This requires control on the singularity of the drift. Yet, there is no monotonicity or convexity available for this.

To get control on the drift, we use five tools from the literature that go back until at least \cite{SmetsBethuelOrlandi07}. 
We briefly recall them. For simplicity, we first consider the ODE system \eqref{ODE:x}. Similar to \eqref{SDE:vec-form} we write it in vector form as 
\begin{equation} \label{ODE:vec-form}
  \frac d{dt} x = F(x), \qquad F^i(x) = \gamma \sum_{j =1 }^N b^i b^j f(x^i - x^j),
\end{equation}
where we consider $\gamma > 0$ for later use, and consider the solution $x$ until the first collision time $\tau$.

The first tool is the following observation on the drift $F$:
\begin{equation*}
  \sum_{i=1}^N F^i(y) = 0 \qquad \text{for all } y \in (\R^2)^N.
\end{equation*}
This is a simple consequence from the summand in \eqref{ODE:vec-form} being odd in $i,j$. As a consequence, we get that the empirical mean (or center of mass) $M(x)$ defined by 
\begin{equation*}
  M : (\R^2)^N \to \R^2,  \qquad
    M(y) := \frac1N \sum_{i =1 }^N y^i
\end{equation*}
is stationary, i.e.\ $\frac d{dt} M(x) = 0$.

The second tool is also a property of $F$. Recalling $f(0) = 0$ and noting that $z \cdot f(z) = 0$ for all $z \in \R^2 \setminus \{0\}$, a simple computation show that for all  $y \in (\R^2)^N$
\begin{align} \notag
   y \cdot F(y) 
   &= \gamma \sum_{i,j=1}^N  b^i b^j y^i \cdot f(y^i - y^j)
   = \frac\gamma2 \sum_{i,j=1}^N  b^i b^j (y^i - y^j) \cdot f(y^i - y^j) \\\label{yFy}
   &= \frac\gamma2 \Big( \sum_{i,j=1}^N  b^i b^j - \sum_{i=1}^N (b^i)^2 \Big)
   = \frac12 \gamma \Big( \Big( \sum_{i=1}^N b^i \Big)^2 - N \Big),
\end{align}
which is constant in $y$. As a consequence, we obtain
\begin{equation*}
  \frac d{dt} R(x) = \gamma\Big( \Big( \sum_{i=1}^N b^i \Big)^2 - N \Big),
\end{equation*}
where
\begin{equation*} 
  R : (\R^2)^N \to \R,  \qquad
  R(y) 
  := \sum_{i=1}^N |y^i - M(y)|^2 
  = \frac1{2 N} \sum_{i=1}^N \sum_{j=1}^N |y^i - y^j|^2
\end{equation*} 
is, up to a multiplicative factor, the dispersion (or the second moment centered at $M(y)$).
Hence, $R(x(t))$ is affine in time. In particular, it does not blow up, no matter how close $x(t)$ is to the singularity of $F$. Also, if $R(x(t))$ is nondecreasing, then there cannot be an all-particle collision. Thus, if $( \sum_{i=1}^N b^i )^2 \geq N$, then the all-particle collision does not happen.
The reason for centering $R(y)$ at $M(y)$, is that $R(x(t))$ reaching $0$ is equivalent to the all-particle collision.

The third tool is to realize that $M$ and $R$ can also be constructed for any particle \textit{cluster} 
\begin{equation} \label{xK}
    x^K := \{x^i : i \in K\} \in (\R^2)^{|K|},
\end{equation}
where $K \subset \bbrack{1,N}$ is nonempty and $|K|$ is the number of elements in $K$. Imagine a configuration $x$ for which there exists a cluster $x^K$ such that for each $i,j \in K$ and each $k \notin K$, $x^i$ and $x^j$ are close to each other while $x^i$ and $x^k$ are sufficiently separated. Set
\begin{equation} \label{RK}
  M^K(x) := \frac1{|K|} \sum_{i \in K } x^i, \qquad
  R^K(x) := \sum_{i \in K } |x^i - M^K(x)|^2 
  = \frac1{2 |K|} \sum_{i \in K} \sum_{j \in K} |x^i - x^j|^2
\end{equation} 
as the local empirical mean and local dispersion.
Because all particles in the cluster are close to each other, $R^K(x)$ is small and each particle $x^i$ is close to $M(x)$. Because all particles in the cluster are separated from all other particles, we obtain from similar computations done above that $\frac d{dt} M^K(x)$ and $\frac d{dt} R^K(x)$ remain bounded. This gives control on both the position and the diameter of the cluster, irrespective of how close $x$ gets to the singularity of $F$. This is the key tool for handling the singularity of $F$. Moreover, as a consequence of the second tool, $R^K(x(t))$ reaching $0$ is equivalent to the occurrence of the multi-particle collision of all particles in the cluster. Thus, we can characterize any multi-particle collision by $R^K(x(t))$ reaching $0$ for the corresponding set $K$ of colliding particles. 

The fourth tool is the extension of the previous three tools to the SDE system \eqref{SDE}. A simple, formal computation based on It\^o's lemma shows that $M(X_t)$ is a Brownian motion and that $R(X_t)$ is a squared Bessel process of dimension 
  \begin{equation} \label{delta}
  \delta := \gamma \bigg( \Big( \sum_{i = 1}^N b^i \Big)^2 - N \bigg) + 2(N-1).
\end{equation}
We recall Bessel processes and their properties in Section \ref{s:BP}. In particular, if $\delta < 2$, then $R(X_t)$ a.s.\ hits $0$ in finite time, but otherwise ($\delta \geq 2$), $R(X_t)$ never hits $0$. This sharp phase transition at $\delta = 2$ is the key for proving whether the all-particle collision can (and then will) or cannot occur. 
As in the third tool, we will consider the general, localized versions $M^K(X_t)$ and $R^K(X_t)$. These processes are more complicated, but with a careful application of Girsanov's Theorem we can still show that $M^K(X_t)$ behaves sufficiently as a Brownian motion and that $R^K(X_t)$ behaves sufficiently as a squared Bessel process of dimension 
  \begin{equation} \label{deltaK}
  \delta_K := \gamma \bigg( \Big( \sum_{i \in K} b^i \Big)^2 - |K| \bigg) + 2(|K|-1).
\end{equation}

The fifth tool is the space-time scaling invariance of $X_t$ (see Proposition \ref{p:invar}\ref{p:invar:scale}), which roughly says that $Y_s := \frac1L X_{L^2 s}$ is also a solution for any $L > 0$. This tool allows us to study clusters $X_t^K$ of close particles on a unit spatial scale, from which we can construct events on the behavior of $X_t^K$ that happen with positive probability uniformly over the closeness of the particles.

Our proofs of Properties \ref{thma}--\ref{thme} make repeated and iterative use of these five tools to untangle the most complex particle configurations and show that the system \eqref{SDE} behaves in the orderly manner dictated by \ref{thma}--\ref{thme}.  

\subsection{Contribution to the literature} 
\label{s:intro:lit}

We start by mentioning three special cases. First, in the single-sign case, i.e.\ when $b^i = 1$ for all $1 \leq i \leq N$, \cite{LiuYang16} has proved \ref{thma} and \ref{thmb} from Section \ref{s:intro:main:results}, and showed that collisions do not occur, which is consistent with \ref{thmc}--\ref{thme}. Second, when $N=2$, \ref{thma}--\ref{thme} are established in \cite[Section 6]{FournierJourdain17}. Third, in one spatial dimension, \cite{VanMeursPeletierPozar22} studies the ODE system \eqref{ODE:x} globally in time with the removal rule. For this system, they show that \ref{thma} and \ref{thmb} hold, and that \ref{thmc} and \ref{thmd} do not hold in general. It remains unclear whether \ref{thme} holds.
\medskip

Next we review recent work on the closely related Keller-Segel system, which is obtained from \eqref{SDE} by replacing `$b^i b^j$' by `$-1$', i.e.
\begin{equation} \label{KS}
  (KS) \qquad dX_t^i = dB_t^i - \gamma \sum_{j =1 }^N f(X_t^i - X_t^j) dt,
\end{equation}
and applying instant separation upon collision.
In $(KS)$, all particles are indistinguishable (i.e.\ no sign is allocated to the particles) and attract each other. The well-posedness of $(KS)$ is studied in 
\cite{CattiauxPedeches16,
FournierJourdain17,
FournierTardy25,
Tardy24}.

The instant separation rule raises two additional challenges with respect to $(\SC)$ from \eqref{SDE}:
\begin{itemize}
  \item the set of collision times is infinite,
  \item collisions of more than 2 particles can occur. 
\end{itemize}
The infinite set of collision times requires a weaker notion of solutions. In \cite{FournierTardy25} this notion is based on Dirichlet form theory.
Moreover, due to the pairwise attraction between all particles, the number of colliding particles may exceeds a critical number (which depends on $\gamma$), in which case the colliding particles cannot separate anymore. When this happens, the particle system is said to explode, and the solution ceases to exist. 

Thanks to the similarity of $(\SC)$ with $(KS)$, we will prove Proposition \ref{p:cty:tau} by modifying the corresponding proofs in \cite[Section 5]{Tardy24} or \cite[Theorem 5-(i)]{FournierTardy25} on $(KS)$. We expect that Theorem \ref{t:3plus} can also be proved by modifying the arguments in \cite[Section 9]{FournierTardy25}. Along this route, $x$ is decomposed as the triple $(M(x), R(x), U(x))$, where the `angle' $U(x) \in \cS \subset (\R^2)^N$ is given by $U^i(x) := (x^i - M(x))/\sqrt{R(x)}$ for $i = 1,\ldots,N$, and $\cS$ is the intersection of the unit sphere in $(\R^2)^N$ and the hyperplane of co-dimension $2$ given by $\{ y \in (\R^2)^N : M(y) = 0 \}$. For $(KS)$, the processes $M(X_t)$, $R(X_t)$ and $U(X_{\rho_t})$ are independent for a suitable time change $\rho_t$. A similar computation that demonstrates this independence also applies to $(\SC)$, and hence the same decomposition into independent processes holds for $(\SC)$.

However, this route turned out to be less convenient for $(\SC)$ than for $(KS)$: most of the difficulties of $X_t$ get transferred to $U(X_t)$ while the SDE for $U(X_t)$ is more complicated. 
The reason it works well for
$(KS)$ is that $(KS)$ has a certain monotonicity property that $(\SC)$ does not have. Indeed, while for both systems $R^K(X_t)$ behaves as a squared Bessel process for $K$ as described below \eqref{xK}, the dimension $\delta_K$ of $(KS)$ is decreasing in $|K|$. This allows for an iterative argument over the size of $K$ to get control on multi-particle collisions. For $(\SC)$ this monotonicity of $\delta_K$ does not hold because of the signs $b$ (\eqref{deltaK} clarifies the dependence). 

In our proof of Theorem \ref{t:3plus}, we do not apply the splitting above as done for $(KS)$. Instead, we rely on the fifth tool given by the space-time scaling invariance.
This results in quite a different proof construction than that in \cite[Section 9]{FournierTardy25}; see Section \ref{s:it}. We consider this the main technical novelty of our paper.

Actually, the scaling invariance of $(\SC)$ also holds for $(KS)$. In fact, the decomposition of $X_t$ into  $M(X_t)$, $R(X_t)$ and $U(X_{\rho_t})$ involves a similar space-time scaling.
We expect that our proof method can be adapted to $(KS)$, which potentially results in a simpler proof of the collision properties for $(KS)$.
\medskip

Next we turn to \cite{VanMeursPeletierSlangen25}. There, $(\SC)$ \textit{with the instant separation rule} is studied with the techniques developed in \cite{FournierJourdain17} on $(KS)$. The first result \cite[Theorem 3.1]{VanMeursPeletierSlangen25} states that if a weak solution exists and if $b$ contains both signs, then collisions happen with positive probability on any time interval. We upgrade that proof here to a more streamlined version, which moreover gives a quantitative result; see Proposition \ref{p:2coll} below. It becomes part of our proof of Theorem \ref{t:3plus}, and results in the stronger Property \ref{thmd} that each collision has to be simple.

The second result \cite[Theorem 4.1]{VanMeursPeletierSlangen25} is the construction of a weak solution, which works only for $\gamma < \frac12$. In addition, it is shown that this solution satisfies \ref{thmd}. There is no statement on the uniqueness of solutions; the only result available on this is \cite[Section 6]{FournierJourdain17} for the special case $N=2$.

In comparison to the results in this paper, we observe that $(\SC)$ with the removal rule has several advantages over the instant separation rule. Indeed, the removal rule fits better to the applications mentioned in Section \ref{s:intro:SC} and results into stronger properties (i.e.\ \ref{thma}--\ref{thme}).

\subsection{Discussion}

Here we address possible extensions of our results. First, the extension from the Coulomb interaction force $f(x) = x/|x|^2$ to the Riesz interaction force $f(x) = x/|x|^\alpha$ for $\alpha > 1$ would be very interesting given the large literature on the Riesz interaction force; see e.g.\ the review \cite{Lewin22}. The main challenge is that the second tool breaks down, i.e.\ \eqref{yFy} will depend on $y$, and thus the equation for the dispersion $R(X_t)$ will not decouple. 

Second, \eqref{yFy} does hold in higher dimensions for $f(x) = x/|x|^2$ with $x \in \R^d$ and $d \geq 3$. Then, $f$ is sub-Coulomb. The five tools from Section \ref{s:intro:proof} will still hold with the modification that the `$2$' in the formula's for $\delta$ and $\delta_K$ is replaced by $d$ (see the proof of Lemma \ref{l:R:sBP} for details). This suggests that our proofs may be modifiable to the case $d \geq 3$. We do not pursue this here and leave it for future work.

Third, we address the question on a possible mean-field limit of \eqref{SDE}, i.e.\ the limit $N \to \infty$ after a proper rescaling is applied. For the collision rule given by instant separation, such mean-field limits are established in \cite{FournierJourdain17,VanMeursPeletierSlangen25} for $\gamma = \frac1N \tilde \gamma$ with $\tilde \gamma$ small enough. For $(\SC)$ with instant separation, the mean-field limit is a coupled set of two equations for the particle densities $\rho^\pm$ of the positive and the negative particles. Both equations are conservative and both contain  a diffusion term and a nonlocal interaction term. For $(\SC)$ with the removal rule, we do not expect the mean-field limit equations to be conservative. The expected equations are studied for regular initial conditions in \cite[Theorem 3.1]{MasmoudiZhang05} and by a variational approach in \cite{AmbrosioMaininiSerfaty11}. However, neither of these results provide well-posedness for weak solutions, which poses a second obstacle for establishing the mean-field limit of $(\SC)$.

\medskip
The paper is organized as follows. Section \ref{s:ODE:ex} establishes the continuity of solutions to the ODE system \eqref{ODE:x} at collision. In Section \ref{s:BP} we recall several basic properties of Bessel processes. In Section \ref{s:SC} we develop the tools mentioned in Section \ref{s:intro:proof} for handling $(\SC)$. In Section \ref{s:SDE:cty} we prove Proposition \ref{p:cty:tau} on the continuity of the solution at the first collision time. In Section \ref{s:simple-coll} we prove Theorem \ref{t:3plus} on the characterization of the first collision. In Section \ref{s:mainT} we put these results together to prove our main Theorem \ref{t:SDE} on the global solution of \eqref{SDE}.

\section{Continuity of the ODE system at collisions}
\label{s:ODE:ex}

Consider the ODE system \eqref{ODE:vec-form}, i.e.\ the deterministic counterpart of the SDE \eqref{SDE}. By rescaling time we may set $\gamma =1$. The main result in this section is Proposition \ref{p:ODE} below on the continuity of the solution at collision. We consider this both as an interesting result on its own and as a blueprint for the stochastic version of this result (i.e.\ Proposition \ref{p:cty:tau}). The proof is inspired by \cite[Section 5]{Tardy24} and \cite[Theorem 5-(i)]{FournierTardy25}. 

We set
\begin{equation} \label{cE}
  \cE := \{ x \in (\R^2)^N : x^i \neq x^j \text{ for all } 1 \leq i < j \leq N \}
\end{equation}
as the open set where $F$ is regular (smooth actually). Note that $\cE$ is the set of all particle configurations with distinct particle positions. 

\begin{prop}[Continuity of the ODE solution at collision] \label{p:ODE}
    Given $N \geq 1$, $b \in \{-1,1\}^N$ and $x_\circ \in \cE$, let $x$ be the solution of \eqref{ODE:vec-form} with $x(0) = x_\circ$ and $\tau \in (0, \infty]$ the maximal time of existence. If $\tau < \infty$, then the limit $x(\tau-) := \lim_{t \to \tau} x(t)$ exists and $x(\tau-) \notin \cE$.
\end{prop}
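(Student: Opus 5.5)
The plan is to use the third tool (local dispersion of clusters) together with a compactness/induction argument on cluster size. Take $\gamma=1$ and assume $\tau<\infty$. First, the global dispersion $R(x(t))$ is affine in $t$ and $M(x(t))$ is constant, so $x(t)$ stays bounded on $[0,\tau)$. Hence the set $\Omega$ of limit points of $x(t)$ as $t\nearrow\tau$ is nonempty and compact. The claim $x(\tau-)\notin\cE$ is easy once the limit exists. If the limit $y$ lay in $\cE$, then $F$ would be smooth near $y$, and the ODE could be continued beyond $\tau$, contradicting maximality. Moreover, if $x$ stayed in a compact subset of $\cE$, then $\tau=\infty$. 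So the whole difficulty is the existence of the limit.

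To prove existence, I would argue by strong induction on $N$, with the statement applied to subclusters. The base cases $N\le 2$ are explicit: for $N=2$ the difference $x^1-x^2$ moves radially and the relative dispersion is affine.

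Fix a limit point $y\in\Omega$ and partition $\bbrack{1,N}$ into the classes $K_1,\dots,K_m$ of indices with coinciding positions in $y$. Let $d>0$ be the minimal distance between distinct points of $y$, and take times $t_n\nearrow\tau$ with $x(t_n)\to y$. On each time window during which the clusters $K_\ell$ stay mutually separated by at least $d/2$, the following holds. The local mean $M^{K_\ell}$ has derivative equal to the force from outside particles, which is bounded by $C/d$. The local dispersion $R^{K_\ell}$ has derivative equal to a constant, from the internal computation \eqref{yFy}, plus a cross term bounded by $C\sqrt{R^{K_\ell}}/d$. Both are therefore Lipschitz on such windows with a constant depending only on $d$ and $N$.

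Now start at a time $t_n$ close to $\tau$, at which every $R^{K_\ell}$ is tiny and every $M^{K_\ell}$ is within a small distance of the common position $y^{K_\ell}$. By the Lipschitz bounds, on $[t_n,\tau)$ these quantities move by at most $C(\tau-t_n)$. A continuity (bootstrap) argument then shows that the separation between clusters never drops below $d/2$ on $[t_n,\tau)$. Consequently $M^{K_\ell}(x(t))\to y^{K_\ell}$ and $R^{K_\ell}(x(t))\to R^{K_\ell}(y)=0$. Since each particle satisfies $|x^i-M^{K_\ell}|\le\sqrt{R^{K_\ell}}$, we get $x(t)\to y$. The limit thus exists, and in fact no induction on $N$ is needed.

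The main obstacle is the bootstrap step: we must ensure that the bound on $\frac{d}{dt}R^{K}$ and $\frac{d}{dt}M^{K}$ does not deteriorate as particles inside a cluster approach each other. The key point is the second tool. The internal interactions contribute exactly $\bigl((\sum_{i\in K}b^i)^2-|K|\bigr)$ to $\frac{d}{dt}R^K$, because $z\cdot f(z)=0$ is replaced by the identity $(y^i-y^j)\cdot f(y^i-y^j)=1$, and they contribute zero to $\frac{d}{dt}M^K$. The remaining terms involve only $f$ evaluated at differences between particles of distinct clusters, which are bounded while the separation is at least $d/2$. I expect this computation to be routine. The care lies in closing the bootstrap, by choosing $n$ so that $C(\tau-t_n)$ and the initial discrepancy are both $<d/8$.
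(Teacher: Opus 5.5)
Your argument is correct, and it takes a genuinely different and shorter route than the paper's.

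\textbf{What the paper does.} It first proves, in Lemma \ref{l:ODE:ex}, the dichotomy $\liminf_{t\nearrow\tau}R^K(x(t))>0$ or $\lim_{t\nearrow\tau}R^K(x(t))=0$ for every cluster $K$. This goes by backward induction on $|K|$: the separation of $K$ from the outside particles is extracted from the induction hypothesis on the enlarged clusters $K\cup\{j\}$. It then builds the partition $\bK$ from the equivalence relation $\liminf R^{\{i,j\}}=0$. Only at the end does it integrate the bounded drift \eqref{pfzv} of the cluster means.

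\textbf{What you do instead.} You use compactness (boundedness from $M$ constant and $R$ affine) to fix one subsequential limit $y$. You read the partition directly off the coincidences in $y$, so separation of the clusters is automatic at the times $t_n$. You then close a single bootstrap on $[t_n,\tau)$. This works because the remaining time $\tau-t_n$ is small, while \eqref{pfzv} and \eqref{pfzr} give Lipschitz bounds that are uniform in the intra-cluster distances. The result is $x(t)\to y$ directly, with no induction on $|K|$ and no dichotomy lemma. The paper's heavier structure is chosen because it serves as a blueprint for the stochastic counterpart, Proposition \ref{p:R:alte}, where the dichotomy is established by a crossing and law-of-large-numbers argument rather than a pathwise bootstrap.

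\textbf{Small points to tidy.}
\begin{itemize}
\item If all particles coincide in $y$, there is only one class and $d$ is undefined. In that case $M$ constant and $R$ affine give the conclusion immediately.
\item The Lipschitz bound for $R^{K_\ell}$ requires $R^{K_\ell}$ to be bounded, since the cross term is of order $\sqrt{R^{K_\ell}}/d$. Build this into the bootstrap condition, e.g.\ $R^{K_\ell}(x(t))\le (d/8)^2$ and $|M^{K_\ell}(x(t))-M^{K_\ell}(y)|\le d/8$. Via $|x^i-M^{K_\ell}|\le\sqrt{R^{K_\ell}}$, this also yields the inter-cluster separation $d/2$.
\item The opening sentence about strong induction on $N$ is unused and can be dropped.
\end{itemize}
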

 
The difficulty in proving Proposition \ref{p:ODE} is that $F$ is singular at $\partial \cE$, which contains all possible kinds of (multi-)particle collisions, and that $F$ is unbounded near $\partial \cE$. Hence, a priori it is unclear whether any of the limits $x^i(\tau-)$ exists; the positions $x^i$ may oscillate or become unbounded. To make matters worse, it is known (recall e.g.\ Figure \ref{fig:spec:X}) that collisions between any number of particles can occur.

Based on these difficulties, our proof strategy is to analyze the liminf and limsup as $t \to \tau$ of $R^K(x(t)) \geq 0$ for \textit{each} nonempty cluster index set $K \subset \bbrack{1,N}$, where we recall $R^K(x)$ from \eqref{RK} to be the local dispersion. In particular, we use $R^K(x)$ as a measure for how close together the particles in $K$ are, where $R^K(x) = 0$ corresponds to all particles in $K$ being at the same position, which is then given by the local empirical mean $M^K(x)$. As can be guessed from the computations in Section \ref{s:intro:proof}, $R^K(x(t))$ and $M^K(x(t))$ are regular in time if the particles inside $K$ remain separated from those outside of $K$, even if the particles within $K$ get arbitrarily close. Motivated by this, we will construct a partition $\bK$ of $\bbrack{1,N}$ (we interpret $\bK \subset \{K : K \subset \bbrack{1,N}\}$) such that $\liminf_{t \to \tau } R^K(x(t)) = 0$ for each $K \in \bK$, while particles belonging to different $K, K' \in \bK$ remain separated. This will give us enough control to show that for each $K \in \bK$ the limit $R^K(x(\tau-))$ exists (and equals $0$). By the separation property, this implies that the points $(M^K(x))_{K \in \bK}$ are separated. This will give sufficient control to prove Proposition \ref{p:ODE}. 

\begin{proof}[Proof of Proposition \ref{p:ODE}]
Note from $\tau$ being the maximal time of existence and $\tau < \infty$ that either 
\begin{equation} \label{pfyj}
\limsup_{t \to \tau} |x(t)| = \infty
\quad \text{or} \quad
\liminf_{t \to \tau} \dist(x(t), \cE) = 0.
\end{equation}

To build $\bK$ as above, we start with preparations. We recall $M(x), R(x), M^K(x), R^K(x)$ and their properties from Section \ref{s:intro:proof}. In particular,
\begin{equation} \label{pfxd}  
\frac d{dt} M(x) = 0, \qquad \frac d{dt} R(x) = C
  \qquad \text{on } (0,\tau)
\end{equation} 
for some constant $C \in \R$. 
Hence, the limits $M(x(\tau-))$ and $R(x(\tau-))$ exist. Thus, the first of the two cases in \eqref{pfyj} cannot hold. Then, from the remaining second case we obtain
\begin{equation} \label{pfyi}
  \liminf_{t \to \tau} |x^i - x^j|(t) = 0
  \qquad \text{for some } i \neq j.
\end{equation} 
In addition, for any $K \subset \bbrack{1,N}$ with $|K| \leq N-1$, we have
\begin{align} \label{pfzv}
  \frac d{dt}  M^K(x)
  = \frac1{|K|} \sum_{i \in K} \sum_{j \notin K} b^i b^j f(x^i - x^j)
  \qquad \text{on } (0,\tau). 
\end{align} 
The main ingredient of the proof is stated in Lemma \ref{l:ODE:ex} below. It states that for any nonempty $K \subset \bbrack{1,N}$
\begin{equation} \label{pfxw}
  \liminf_{t \nearrow \tau} R^K(x (t)) > 0 
  \quad \text{or} \quad 
  \lim_{t \nearrow \tau} R^K(x (t)) = 0.
\end{equation}

Next we build $\bK$ and complete the proof. On $\bbrack{1,N}$ let the relation $i \sim j$ be defined by $\liminf_{t \nearrow \tau} R^{\{i,j\}}(x(t)) = 0$. Clearly, this is an equivalence relation. Take $\bK$ as the family of equivalence classes. Note from \eqref{pfxw} that for all $K \in \bK$ and all $j \notin K$ 
\begin{equation*} 
  R^K(x(\tau-)) = 0
  \quad \text{and} \quad
  \liminf_{t \nearrow \tau} R^{K \cup \{j\}}(x(t)) > 0.
\end{equation*}
Hence, for each distinct $K,K' \in \bK$, each $i \in K$ and each $j \in K'$
\begin{equation} \label{pfxv}
  \lim_{t \nearrow \tau}  |x^i(t) - M^K(x(t))| = 0
  \quad \text{and} \quad\liminf_{t \nearrow \tau} |x^i(t) - x^j(t)| > 0.
\end{equation}
Fix $K \in \bK$. By \eqref{pfzv} and \eqref{pfxv}, $\frac d{dt} M^K(x(t))$ is bounded on $(0, \tau)$. Thus, $M^K(x(\tau-))$ exists for each $K \in \bK$. Then, by \eqref{pfxv}, $x^i(\tau-)$ exists for all $i \in K$. Since $K \in \bK$ is arbitrary, we obtain that $x(\tau-)$ exists. Then, \eqref{pfyi} guarantees that $x(\tau-) \notin \cE$. 
\end{proof}

\begin{lem} \label{l:ODE:ex} 
Let $N,b,x_\circ, x, \tau$ be as in Proposition \ref{p:ODE} and $K \subset \bbrack{1,N}$ be nonempty. Then
\begin{equation*}
  \liminf_{t \nearrow \tau} R^K (x(t)) > 0 
  \quad \text{or} \quad 
  \lim_{t \nearrow \tau} R^K (x(t)) = 0.
\end{equation*}
\end{lem}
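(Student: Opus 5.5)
We argue by strong induction on $|K|$. For $|K| = 1$ we have $R^K \equiv 0$, so the claim is trivial. Now fix $K$ with $|K| \geq 2$ and assume the dichotomy holds for every nonempty $K'$ with $|K'| < |K|$. Suppose $\liminf_{t \nearrow \tau} R^K(x(t)) = 0$. We must show that $\limsup_{t \nearrow \tau} R^K(x(t)) = 0$. Suppose instead that $\limsup_{t\nearrow\tau} R^K(x(t)) > 0$. Then we can choose $0 < \e < \eta$ small, with $\eta$ below this limsup, and find infinitely many disjoint time intervals $[s_n, t_n] \subset (0,\tau)$ with $s_n, t_n \to \tau$ such that
\[
R^K(x(s_n)) = \e, \qquad R^K(x(t_n)) = \eta, \qquad R^K(x(t)) \in [\e,\eta] \text{ on } [s_n,t_n].
\]
Since $\tau < \infty$, we have $t_n - s_n \to 0$. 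The contradiction will come from a bound $|\frac{d}{dt} R^K(x(t))| \leq C$ on such intervals, where $C$ is independent of $n$. Such a bound forces $t_n - s_n \geq (\eta - \e)/C$, which is incompatible with $t_n - s_n \to 0$.

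To get the bound, we compute as in \eqref{yFy}, now restricted to the cluster $K$. The internal interactions contribute a constant, and only the cross terms remain:
\[
\frac{d}{dt} R^K(x) = \Big(\Big(\sum_{i\in K} b^i\Big)^2 - |K|\Big) + 2\sum_{i\in K}\sum_{j\notin K} b^ib^j\,(x^i - M^K(x))\cdot f(x^i-x^j).
\]
Each cross term is bounded by $\sqrt{\eta}/|x^i-x^j|$, since $|x^i - M^K(x)|^2 \leq R^K(x) \leq \eta$. So it suffices to keep all $j\notin K$ at distance at least some $c > 0$ from all $i\in K$ whenever $R^K \le \eta$, for times near $\tau$.

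This separation is the main obstacle. There are two possible ways it can fail. The first is that some outside particle $j$ approaches the cluster $K$. If $j$ comes within distance $r$ of some $x^i$ with $i \in K$ while $R^K \le \eta$, then $R^{K\cup\{j\}} \lesssim \eta + r^2$. This is small, but $K\cup\{j\}$ is larger than $K$, so the induction hypothesis does not apply to it. The second is that a pair inside $K$ gets close. That does no harm, because the internal terms only enter through a constant.

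To handle the first issue, I would strengthen the statement being proved to the following \emph{maximal} version. Consider the set of all $K$ with $\liminf_{t\nearrow\tau} R^K = 0$. This set is closed under unions of sets that share a common index: if $i \in K \cap K'$, then $R^{K\cup K'}$ is controlled by $R^K + R^{K'}$ along a common subsequence. That last step fails, however, unless both $R^K$ and $R^{K'}$ are small along the \emph{same} times.

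A cleaner route is the following. Choose $\eta$ so small that, at every time where $R^K \le \eta$, either all outside particles are at distance at least $\sqrt{\eta}$ from the cluster, or $R^{K'}\le 4\eta$ for some strictly larger $K'\supset K$. Then run a downward induction on $|K|$, starting from $K = \bbrack{1,N}$. That case follows directly from \eqref{pfxd}: $R$ is affine in time, hence it has a limit. Assume the dichotomy for all sets larger than $K$. Every $K' \supsetneq K$ then has either $R^{K'} \to 0$ or $\liminf R^{K'} > 0$.

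If some $K' \supsetneq K$ has $R^{K'}\to 0$, then $R^K \le R^{K'} \to 0$ and we are done. Otherwise, each $K'\supsetneq K$ has $\liminf R^{K'} \ge 2\kappa > 0$. Take $\eta < \kappa/4$. Then near $\tau$, whenever $R^K \le \eta$, no outside particle can be within $\sqrt{\kappa}/4$ of the cluster. This is because such a particle $j$ would give $R^{K\cup\{j\}} \le C(\eta + \kappa/16) < 2\kappa$, with the constant adjusted by shrinking $\eta$ further. This gives the separation we need, with $c \sim \sqrt{\kappa}$. The derivative bound then follows, and the crossing argument above yields the contradiction.

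Running the induction downward in $|K|$, rather than upward, is what removes the obstacle.
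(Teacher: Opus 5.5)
Your final argument is correct and is essentially the paper's proof. You can drop the abandoned upward induction and the union-closure attempt. What remains is the same structure: backward induction on $|K|$ starting from the affine $R$, and a case split on whether some $K\cup\{j\}$ has vanishing $\liminf$. In that case the induction hypothesis and the ordering \eqref{pfzt} finish. Otherwise a separation estimate gives a uniform bound on $\frac{d}{dt}R^K$ on $\{R^K\le\eta\}$ near $\tau$. The only difference is at the end: you conclude by counting upcrossings of $[\e,\eta]$, whereas the paper bootstraps along a sequence $t_k\nearrow\tau$ with $R^K(x(t_k))\to0$. Your upcrossing version is the one the paper itself uses in the stochastic analogue, Proposition \ref{p:R:alte}.
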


\begin{proof} 
In addition to the preparations done in the proof of Proposition \ref{p:ODE}, we make the following observations.
First, when $K = \{i\}$ is a singleton, we simply have $ M^{\{i\}}(x) = x^i$ and $ R^{\{i\}}(x) = 0$.
Second, for any $K \subset \bbrack{1,N}$ with $2 \leq |K| \leq N-1$, we have
\begin{align} \label{pfzr}
  \frac d{dt}  R^K(x) 
  = \Big( \sum_{i \in K} b^i \Big)^2 - |K| + 2 \sum_{i \in K} \sum_{j \notin K} b^i b^j (x^i -  M^K(x)) \cdot f(x^i - x^j).
\end{align} 
Third, we note from the second expression of $R^K$ in \eqref{RK} that it is ordered in the following sense: for all $y \in (\R^2)^N$
\begin{equation} \label{pfzt}
  K \subset K' \implies  R^K(y) \leq \frac{|K'|}{|K|} R^{K'}(y).
\end{equation}
In particular, if $R^{K'}(x) = 0$, then $R^K(x) = 0$ for all $K \subset K'$.

To prove Lemma \ref{l:ODE:ex} it is enough to show that the following induction hypothesis holds for all $n \in \bbrack{1,N}$:
\begin{equation} \tag{IH} \label{pfzu} 
  \liminf_{t \nearrow \tau} R^K(x(t))  = 0
  \implies
  \limsup_{t \nearrow \tau} R^K(x(t)) = 0
  \qquad
  \text{for all } K \subset \bbrack{1,N} \text{ with } |K| = n.
\end{equation}
From the properties of $R^{\{i\}}(x)$ and $R(x)$ established in the preparations we see that \eqref{pfzu} holds for $n \in \{1,N\}$. We prove \eqref{pfzu} for the remaining values of $n$ by backward induction. Suppose \eqref{pfzu} holds for some $n \in \bbrack{3,N}$. Take any $K \subset \bbrack{1,N}$ with $|K| = n-1$. We will show that the implication in \eqref{pfzu} holds for $K$. 

Set $K^j := K \cup \{j\}$ for any $j \notin K$.
Suppose that there exists $j \notin K$ such that $\liminf_{t \nearrow \tau} R^{K^j}(x(t))$ $= 0$. Then, by \eqref{pfzt} and \eqref{pfzu} applied with $n$ we obtain
\begin{equation*}
  \limsup_{t \nearrow \tau}  R^{K}(x(t))
  \leq \frac n{n-1} \limsup_{t \nearrow \tau} R^{K^j}(x(t))
  = 0,
\end{equation*}
and thus $K$ satisfies \eqref{pfzu}. 

Hence, we may assume that
\begin{equation*}
  \min_{j \notin K} \liminf_{t \nearrow \tau}  R^{K^j}(x(t)) > 0.
\end{equation*}
Then, there exist constants $\alpha, u > 0$ such that
\begin{equation} \label{pfzs}
   R^{K^j}(x(t)) \geq 7 \alpha
  \qquad
  \text{for all } t \in [\tau - u, \tau) \text{ and all } j \notin K.
\end{equation} 
For a given $j \notin K$, let $\iota \in K$ be a minimizer of $d^j(t) := \min_{i \in K} |x^i(t) - x^j(t)|$. For any $i \in K \setminus \{\iota\}$
\begin{equation*}
  |x^j(t) - x^i(t)|
  \leq |x^j(t) - x^\iota(t)| + |x^\iota(t) - M^K(x(t))| + |M^K(x(t)) - x^i(t)|
  \leq d^j(t) + 2 \sqrt{R^K(x(t))}.
\end{equation*}
Thus, for any $i \in K$ and any $j \notin K$, we have $|x^j(t) - x^i(t)|^2 \leq 2(d^j(t) )^2 + 8 R^K(x(t))$. Using this, we obtain on $[\tau - u, \tau)$ that, by \eqref{pfzs} and $n \geq 3$, for any $j \notin K$
\begin{equation} \label{pfzp}
  7 \alpha
  \leq R^{K^j}(x(t))
  = \frac n{n-1} R^K(x(t)) + \frac1{2n} \sum_{i \in K} |x^i(t) - x^j(t)|^2
  \leq 6 R^K(x(t)) + (d^j(t))^2.
\end{equation}

Next we conclude by showing that for any sequence $s_\ell \nearrow \tau$ as $\ell \to \infty$, we have $R^K (x(s_\ell)) \to 0$ as $\ell \to \infty$. We apply a bootstrap argument. On time intervals $[a,b) \subset [\tau - u, \tau)$ on which $R^K(x) < \alpha$, we have by \eqref{pfzp} that $d^j > \sqrt \alpha$ for any $j \notin K$. This yields (recall \eqref{pfzr})
\begin{align} \notag
  \Big| \frac d{dt} R^K(x) \Big| 
  &= \Big| \Big( \sum_{i \in K} b^i \Big)^2 - n + 2 \sum_{i \in K} \sum_{j \notin K} b^i b^j (x^i - z^K) \cdot f(x^i - x^j) \Big|  \\\label{pfxu}
  &\leq \Big|\Big( \sum_{i \in K} b^i \Big)^2 - n \Big| + 2 n \sum_{j \notin K} \frac{\sqrt{R^K(x)}}{d^j} 
  < C + 2 n(N-n) \frac{\sqrt{\alpha}}{\sqrt{\alpha}}
  =: C', 
\end{align} 
which is uniformly bounded in $a,b$. To use this, we employ $\liminf_{t \nearrow \tau } R^K(x(t)) = 0$ to obtain a 
sequence $t_k \nearrow \tau$ with $R^K (x(t_k)) \to 0$ as $k \to \infty$. Let $k_* \geq 1$ be such that for all $k \geq k_*$ we have $R^K (x(t_k)) \leq  \frac\alpha2$ and $t_{k+1} - t_k \leq \alpha / (2C')$. Then, for any $k \geq k_*$, applying \eqref{pfxu} on $[a,b)$ with $a = t_k$ and some $b$ close enough to $a$ to ensure that $R^K(x) < \alpha$ on $[a,b)$, we obtain that 
\[
R^K(x(t)) \leq R^K(x(a)) + C'(t-a) \leq \frac\alpha2 + C'(t - t_k)
\qquad \text{on } [t_k,b).
\]
In particular, for any $t < t_k + \alpha / (2C')$, this estimate yields $R^K(x(t)) < \alpha$, and thus we may take $b = t_{k+1}$. 
Finally, recalling $s_\ell$, we have for all $\ell$ large enough that $s_\ell \in [t_{k_*}, \tau)$. For any such $\ell$, let $k_\ell$ be such that $s_\ell \in [t_{k_\ell}, t_{k_\ell + 1})$. Clearly, $k_\ell \to \infty$ as $\ell \to \infty$. Then, by \eqref{pfxu} on $[t_{k_\ell}, t_{k_\ell + 1})$, we obtain
\[
  R^K (x(s_\ell)) 
  \leq R^K (x(t_{k_\ell})) + C' (s_\ell - t_{k_\ell})
  \leq R^K (x(t_{k_\ell})) + C' (t_{k_\ell + 1} - t_{k_\ell}),
\]
which vanishes as $\ell \to \infty$.
\end{proof}

\section{Squared Bessel processes stopped at $0$}
\label{s:BP}

In this section we recall several established properties on squared Bessel processes and introduce related notation.
Let $\delta \in \R$ be a parameter, $r \geq 0$ be the initial condition, and $(\beta_t)_{t \ge 0}$ be a $1$-dimensional Brownian Motion. We consider the following SDE
\begin{equation} \label{R:Bessel:SDE}
  R_t = r + \delta t + \int_0^t 2 \sqrt{ R_s } d \beta_s.
\end{equation}
The typical example of a process that satisfies \eqref{R:Bessel:SDE} for all $t \geq 0$ for some $(\beta_t)_{t \ge 0}$ with $\delta = n$ a positive integer, is $(| B_t |^2)_{t \ge 0}$, where $(B_t)_{t \ge 0}$ is an $n$-dimensional Brownian motion. Based on this, $\delta$ is interpreted as the dimension, even when $\delta \in \R \setminus \N$.

\begin{lem}[Well-posedness of \eqref{R:Bessel:SDE} for $\delta \in \R$; {\cite[\S XI.1]{RevuzYor99}}]
  \label{l:sqBessel}
\comm{[ RY05 refers for this to Chap IX Sect 3]}  
  \noindent
\begin{enumerate}
\item   \label{i:l:sqBessel:pos-dim}
For $\delta > 0$ there exists a unique strong solution $(R_t)_{t \ge 0}$ of \eqref{R:Bessel:SDE}.
\item  For $\delta\le0$ there exists a unique strong solution $(R_t)_{t \in [0, \sigma)}$ of \eqref{R:Bessel:SDE}, where $\sigma$ is the hitting time at $0$. 
\label{i:l:sqBessel:neg-dim}
\end{enumerate}
\end{lem}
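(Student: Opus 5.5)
Since the lemma is cited from \cite[\S XI.1]{RevuzYor99}, the plan is to rebuild it from the Yamada--Watanabe theory for one-dimensional SDEs. The diffusion coefficient of \eqref{R:Bessel:SDE} is $\sigma(x) = 2\sqrt{|x|}$, which is H\"older continuous with exponent $\frac12$. It therefore satisfies $|\sigma(x)-\sigma(y)|^2 \le 4|x-y| =: h(|x-y|)$ with $\int_{0+} h(u)^{-1}\,du = \infty$. The drift $\delta$ is constant, hence Lipschitz. The Yamada--Watanabe criterion then gives pathwise uniqueness for the extended SDE
\begin{equation*}
  R_t = r + \delta t + \int_0^t 2\sqrt{|R_s|}\, d\beta_s
\end{equation*}
on $\R$, for every $\delta \in \R$. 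Weak existence holds by Skorokhod's theorem, because the coefficients are continuous with linear growth. Combining pathwise uniqueness with weak existence, the Yamada--Watanabe theorem yields a unique strong solution defined for all $t \ge 0$.

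\textbf{Case $\delta > 0$.} It remains to show $R_t \ge 0$ for all $t$, so that $\sqrt{|R_s|} = \sqrt{R_s}$ and the solution solves \eqref{R:Bessel:SDE} itself. I would use a comparison theorem, again of Yamada--Watanabe type: with the same diffusion coefficient, drift $\delta \ge 0$ dominates drift $0$, and the drift-$0$ solution started at $r \ge 0$ is identically $0$ once it hits $0$. Alternatively, one can apply Tanaka's formula to $R_t^- $. The local time of $R$ at $0$ vanishes, since $\int_0^t \1_{\{R_s=0\}} \, d\langle R\rangle_s = 0$, and the quadratic variation density $4|R_s|$ is zero on $\{R_s \le 0\}$ only at the point $0$. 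This leaves $R_t^- = -\int_0^t \1_{\{R_s \le 0\}}(\delta\, ds + 2\sqrt{|R_s|}\, d\beta_s)$, which is a supermartingale-type process started at $0$ and nonpositive in drift. Hence $\E[R_t^-] \le 0$, so $R_t^- = 0$. Uniqueness within nonnegative solutions follows from the pathwise uniqueness above.

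\textbf{Case $\delta \le 0$.} Take the global solution of the extended SDE and restrict it to $[0,\sigma)$, where $\sigma$ is the hitting time of $0$. On this interval $R > 0$, so it solves \eqref{R:Bessel:SDE}. Uniqueness up to $\sigma$ follows by localizing pathwise uniqueness at the stopping times $\inf\{t : R_t \le \e\}$ and letting $\e \downarrow 0$. On $\{R \ge \e\}$ the coefficient is even Lipschitz, so classical uniqueness suffices there.

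I expect the main obstacle to be the nonnegativity argument for $\delta > 0$, specifically the handling of local time at $0$. The cleanest route is probably to cite the comparison theorem directly rather than running Tanaka's formula by hand.
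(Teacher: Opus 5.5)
Your proposal is correct and is essentially the standard argument behind the citation (Revuz--Yor, \S XI.1 together with \S IX.3): Yamada--Watanabe pathwise uniqueness for the coefficient $2\sqrt{|x|}$, weak existence, strong existence by the Yamada--Watanabe theorem, and nonnegativity for $\delta>0$ by comparison with the identically zero solution; the paper itself gives no proof beyond that reference, and your localization at $\inf\{t : R_t \le \e\}$ is a clean way to handle the stopped case $\delta \le 0$. The only slip is in your Tanaka alternative: $\int_0^t \1_{\{R_s=0\}}\, d\langle R\rangle_s = 0$ holds for every continuous semimartingale (e.g.\ Brownian motion) and does not give $L^0_t(R)=0$; the correct reason is that $\int_0^t \1_{\{R_s>0\}} R_s^{-1}\, d\langle R\rangle_s \le 4t < \infty$, which by the occupation times formula and right-continuity of $a \mapsto L^a_t$ forces $L^0_t = 0$, so citing the comparison theorem, as you suggest, is indeed the safer route.
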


For our purpose, it is convenient, for $\delta \leq 0$, to extend $(R_t)_{t \in [0, \sigma)}$ in time by freezing it at $0$, i.e.\ by defining $R_t = 0$ for all $t \ge \sigma$. To make this consistent with the case $\delta > 0$, we also freeze $R_t$ at $0$ upon its first $0$-hitting time $\sigma$. This defines, for all $\delta \in \R$, the squared Bessel process $(R_t)_{t \ge 0}$ stopped at $0$; we will refer to such a process as a $\SB(\delta, r)$-process, omit the second entry if no initial condition is specified, and omit the wording `stopped at $0$' (we will rarely work with squared Bessel process not stopped at $0$, and indicate such cases clearly).

\begin{thm}[Properties] \label{t:BP}
Let $(\Omega, \cF,(\cF_t)_{t\ge 0},\PP)$ be a filtered probability space, $\delta \in \R$, $r > 0$ and $(R_t)_{t \ge 0}$ be a $\SB(\delta, r)$-process with $0$-hitting time $\sigma$ defined on $(\Omega, \cF,(\cF_t)_{t\ge 0},\PP)$. Then, for any $T, \alpha > 0$:
\begin{enumerate}[label=(\roman*)]
  \item \label{t:BP:scal} $(\frac 1\alpha R_{\alpha s})_{s \ge 0}$ is a $\SB(\delta, \frac r\alpha)$-process,
  \item \label{t:BP:geq2} if $\delta \geq 2$, then $\PP(\sigma < \infty ) = 0$. Moreover, 
  \[
    \PP \Big( \sup_{0 \leq t \leq T} R_t \geq \alpha  \Big) > 0.
  \]
  \item \label{t:BP:less2} if $\delta < 2$, then $\PP(\sigma < \infty) = 1$.
  Moreover, if also $r < \alpha$, then 
  \[
    \PP \Big( \sigma \le T, \ \sup_{0 < t < T} R_t < \alpha \Big) > 0.
  \]
  \item \label{t:BP:time-int} if $\delta < 2$, then 
  $$
    \int_0^\sigma \frac{\dd s}{\sqrt{ R_s }}< \infty. 
    $$
\end{enumerate}
\end{thm}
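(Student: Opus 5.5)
Each of the four properties will be reduced to standard facts on squared Bessel processes from \cite[\S XI.1]{RevuzYor99}, applied up to the hitting time $\sigma$; recall that the process is frozen at $0$ after $\sigma$.

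\emph{Scaling \ref{t:BP:scal}.} Set $\tilde R_s := \frac1\alpha R_{\alpha s}$ and $\tilde\beta_s := \alpha^{-1/2}\beta_{\alpha s}$. Then $\tilde\beta$ is a Brownian motion with respect to $(\cF_{\alpha s})_{s\ge0}$. Substituting into \eqref{R:Bessel:SDE} gives, for $s<\sigma/\alpha$,
\[
\tilde R_s = \tfrac r\alpha + \delta s + \int_0^s 2\sqrt{\tilde R_u}\,\dd\tilde\beta_u .
\]
The $0$-hitting time of $\tilde R$ is $\sigma/\alpha$, and freezing at $0$ commutes with the rescaling. Hence $\tilde R$ is a $\SB(\delta,\frac r\alpha)$-process, by the uniqueness in Lemma \ref{l:sqBessel}.

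\emph{Hitting \ref{t:BP:geq2}, \ref{t:BP:less2}.} Use the scale function of the diffusion $\dd R = \delta\,\dd t + 2\sqrt R\,\dd\beta$ on $(0,\infty)$, which is $s(x)=x^{1-\delta/2}$ for $\delta\neq2$ and $s(x)=\log x$ for $\delta=2$.
\begin{itemize}
\item For $\delta\ge2$ we have $s(0+)=-\infty$ (or $+\infty$ when $\delta>2$, after a sign change), so $0$ is not reached and $\sigma=\infty$ a.s.
\item For $\delta<2$ we have $s(0+)=0$ finite. For $\delta\le 0$ the process is a nonnegative supermartingale whose drift is nonpositive, and it must hit $0$ because it cannot converge to a positive limit with quadratic variation growing like $4\int R$. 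For $0<\delta<2$, use that $s(R_t)$ is a nonnegative local martingale with $s(\infty)=\infty$, together with recurrence (the process is not transient to $\infty$ when $\delta\le2$); together these give $\sigma<\infty$ a.s.
\end{itemize}
Alternatively, I simply cite \cite{RevuzYor99} for these hitting facts.

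The quantitative positivity statements follow from a support argument. Before $\sigma$, and while $R$ stays in a compact $[\epsilon,M]\subset(0,\infty)$, the coefficients are smooth and the diffusion coefficient is nondegenerate. By the Stroock--Varadhan support theorem, or by Girsanov comparison with the Brownian motion $\sqrt{\cdot}$-transform, any continuous path in that strip is followed within any tube with positive probability.
\begin{itemize}
\item For \ref{t:BP:geq2}, pick a deterministic path from $r$ to $2\alpha$ within time $T$; this gives $\PP(\sup_{[0,T]} R\ge\alpha)>0$.
\item For \ref{t:BP:less2}, first follow with positive probability a path that moves from $r$ to a small level $\epsilon$ by time $T/2$ while staying below $\alpha$. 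Then use the Markov property at that time. Starting from $\epsilon$, scaling \ref{t:BP:scal} gives $\PP_\epsilon(\sigma\le T/2,\ \sup R<\alpha) = \PP_1(\sigma\le T/(2\epsilon),\ \sup R<\alpha/\epsilon)$. This tends to $\PP_1(\sigma<\infty)=1$ as $\epsilon\to0$, so it is positive for small $\epsilon$.
\end{itemize}

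\emph{Integrability \ref{t:BP:time-int}.} This is the main obstacle. Away from $0$ the integrand is bounded on $[0,\sigma-\eta]$, so only the behaviour near $\sigma$ matters. I would apply It\^o to $\sqrt{R}$ before $\sigma$:
\[
\dd\sqrt{R_t} = \frac{\delta-1}{2\sqrt{R_t}}\,\dd t + \dd\beta_t .
\]
\begin{itemize}
\item For $\delta\ne1$ this yields $\int_0^{t}R_s^{-1/2}\,\dd s = \frac{2}{\delta-1}\bigl(\sqrt{R_t}-\sqrt r-\beta_t\bigr)$ for $t<\sigma$. Letting $t\nearrow\sigma<\infty$, the right side converges to a finite limit, so the integral is finite.
\item For $\delta=1$, $\sqrt R$ is a reflected Brownian motion up to $\sigma$. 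The occupation-density formula then gives $\int_0^\sigma R_s^{-1/2}\,\dd s = \int_0^\infty y^{-1}L^y_\sigma\,\dd y$ with local times $L^y$. Near $y=0$, $L^y_\sigma$ is Hölder and behaves like $L^0_\sigma$; some care is needed here (Ray--Knight or direct comparison), and alternatively one can use the occupation formula for general $\delta<2$, with local time vanishing at $0$ suitably.
\end{itemize}
The case $\delta=1$ is the delicate point; the generic cases follow directly from the It\^o identity.
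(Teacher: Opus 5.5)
Parts (i)--(iii) are handled essentially as in the paper. The paper cites Revuz--Yor for the scaling and the hitting dichotomy, and Fournier--Jourdain for the quantitative bound in (iii). Your support/Girsanov argument for the ``moreover'' in (ii) and your ``descend to level $\epsilon$, then rescale'' argument for (iii) are valid self-contained substitutes. For (iii), apply the strong Markov property at the hitting time of $\epsilon$, so that the second window lies inside $[0,T]$. Your own sketches of the hitting facts are loose: for $\delta=0$ the supermartingale argument only gives $R_t\to0$, not $\sigma<\infty$. So there the citation carries the argument, exactly as in the paper.

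For (iv) your route genuinely differs from the paper's, and it leaves a gap at $\delta=1$.

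\emph{The paper's route.} It time-changes by $V_t=\int_0^t R_s^{-1}\,\dd s$, which turns $R$ into the geometric Brownian motion $Z_v=r\exp(2B_v+(\delta-2)v)$. It then bounds $\int_0^\sigma R_s^{-1/2}\,\dd s=\int_0^{V_\sigma}\sqrt{Z_u}\,\dd u$ by $\sqrt r\int_0^\infty \exp(B_u+(\frac\delta2-1)u)\,\dd u<\infty$. This is uniform in $\delta<2$ and does not even use $\sigma<\infty$.

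\emph{Your route.} Your It\^o identity for $\sqrt R$ is correct, and for $\delta\neq1$ it is shorter. The left side is increasing in $t$ and $\sigma<\infty$ a.s., so letting $t\nearrow\sigma$ even yields $\int_0^\sigma R_s^{-1/2}\,\dd s=\frac{2}{1-\delta}(\sqrt r+\beta_\sigma)$. The price is that the identity degenerates at $\delta=1$, and there the proposal stops short. As written, (iv) is not proved for $\delta=1$, which the statement includes.

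\emph{Closing the case $\delta=1$.}
\begin{itemize}
\item Nothing is reflected. On $[0,\sigma)$, $W:=\sqrt R=\sqrt r+\beta$ is a plain Brownian motion, and $\sigma$ is its first hitting time of $0$.
\item The decisive fact is not that $L^y_\sigma$ ``behaves like $L^0_\sigma$'', but that $L^0_\sigma=0$. This holds because $W>0$ on $[0,\sigma)$ and $L^0$ only increases on $\{W=0\}$. If $L^0_\sigma$ were positive, $\int_0 y^{-1}L^y_\sigma\,\dd y$ would diverge.
\item Given this, use H\"older continuity of $a\mapsto L^a_t$ of order $\frac14$, uniformly for $t$ in compacts (Revuz--Yor, Ch.~VI). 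It gives $L^y_\sigma\le C_\omega y^{1/4}$ on $[0,1]$, so $\int_0^1 y^{-1}L^y_\sigma\,\dd y<\infty$.
\item For large $y$, $\int_1^\infty y^{-1}L^y_\sigma\,\dd y\le\int_0^\infty L^y_\sigma\,\dd y=\sigma<\infty$.
\end{itemize}
Alternatively, simply use the paper's time change at $\delta=1$. There it gives $Z_v=r\exp(2B_v-v)$ with no special treatment.
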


\begin{proof}
\ref{t:BP:scal} is \cite[Proposition 1.10, p.446]{RevuzYor99}.
\ref{t:BP:geq2} and the first part of \ref{t:BP:less2} are proved on \cite[p.442]{RevuzYor99}. The second part of \ref{t:BP:less2} is established in Step 5 of the proof of \cite[Proposition 4]{FournierJourdain17}. 

Finally, for \ref{t:BP:time-int} we provide the proof in full.
    We apply a time change from $t$ (and $s$) to a new time variable $v$ (and $u$). For all $t\in [0,\sigma)$, set $V_t = \int_0^t R^{-1}_s\dd s$ and for all $v \in [0, V_{\sigma})$, set $T_v = \inf \{ s \ge 0 : V_s \ge v \}$ as the generalized inverse. Setting $Z_v = R_{T_v}$ for all $v \in [0,V_{\sigma})$, we get that for all $v\in [0,V_{\sigma})$,
    \begin{align*}
     Z_v = Z_0 + \int_0^v 2 Z_u \dd B_u + \delta \int_0^v Z_u \dd u, 
    \end{align*}
    where $(B_v)_{v\ge 0}$ is a standard Brownian motion. This SDE has an explicit solution. It is given by
    \begin{align*}
        Z_v = \exp \Big( 2B_v + (\delta -2)v\Big) \quad \mbox{ for all } v\in [0, V_{\sigma}).
    \end{align*}
    Then, performing the change of variable $u = V_s$ we get
    $$
    \int_0^{\sigma} \frac{\dd s}{\sqrt{ R_s} } = \int_0^{V_{\sigma}} \sqrt{ Z_u } \dd u \le \int_0^\infty \exp \Big( B_u + \Big( \frac\delta2 -1 \Big) u\Big) \dd u.
    $$
    The last quantity is a.s.\ finite since a.s.\ $B_u/u \to 0$ as $u\to \infty$.
\end{proof}

\begin{rem} \label{r:BP}
From the same proof it follows that Theorem \ref{t:BP}\ref{t:BP:geq2} also holds for squared Bessel processes that start at $r = 0$ and are not stopped at $0$.
\end{rem}

\section{Strong well-posedness prior to collision}
\label{s:SC}

\subsection{The solution to the SDE system}
\label{s:SC:sol}

In this section, we build a precise solution concept for the SDE system \eqref{SDE} up to the first collision, and show that it has strong existence and uniqueness. We will be pedantic in the statements of our lemmas in Sections \ref{s:SC:sol} and \ref{s:SC:prop}, because we apply them in various settings in the subsequent sections, where they play a crucial role in the proofs. 

Let $(\Omega, \cF)$ be a measurable space. For any probability $\PP$ that we introduce (possibly with sub- and superscripts) on $(\Omega, \cF)$ we denote by either $\E_\P$ or $\E$ (with the same sub- and superscripts) the corresponding expectation.

Recall $\cE$ from \eqref{cE} as the set of particle configurations with distinct positions. For any $\e > 0$, we introduce
\begin{align} \label{cEe} 
    \cE^\e := \Big\{x\in (\R^2)^N : \min_{1\le i\ne j \le N}\|x^i-x^j\| \ge \e \Big\} \subset \cE
\end{align}
as the set of separated particle configurations. Note that $F$ from \eqref{SDE:vec-form} is smooth on $\ov \cE^\e$ for each $\e$.
We also introduce the one-point compactification of $\cE$ (recall \eqref{cE}) given by $\cE \cup \{ \triangle \}$, which can be thought of as that any sequence $(x_k) \subset \cE$ that either tends to $\partial \cE$ or to $\infty$, converges in $\cE \cup \{ \triangle \}$ to the abstract point $\triangle$.
\comm{ChatGPT says r.v. is OK (see below), and r.v.s for the plural.}

\begin{defn}[Strong solution of \eqref{SDE}] \label{d:s-sol}
   Let $\gamma > 0$, $N \geq 1$, $b \in \{-1,1\}^N$ and $(\Omega,\cF,(\cF_t)_{t\ge 0},\P)$
   be a filtered probability space.
   Let $(B_t)_{t\ge 0}$ be a $2N$-dimensional $(\cF_t)_{t\ge 0}$-adapted Brownian motion.
   Let $X_\circ$ be an $\cE\cup \{\triangle\}$-valued $\cF_0$-measurable r.v. Let $(\cF^B_t)_{t\ge 0}$ be the canonical filtration of $(B_t)_{t\ge 0}$. We say that an $\cE\cup \{\triangle\}$-valued process $(X_t)_{t\ge 0}$ is a strong solution of \eqref{SDE} starting from $X_\circ$ if
   \begin{itemize}
       \item $X_t$ is $\sigma(X_\circ, \cF^B_t)$-measurable for all $t\ge 0$, \comm{[For this property we want $\cF^B$ in addition to $\cF$]}
       \item $\displaystyle X_t = X_\circ + B_t + \int_0^t F(X_s)\dd s$ \quad for all $t\in [0,\tau)$, where
       \begin{align} \label{taue} 
    \tau_\e := \inf\{t\ge 0 : X_t \notin \cE^\e\}, \qquad 
    \tau := \lim_{\e \to 0} \tau_\e \ \text{ a.s., and}
\end{align}
       \item $X_t = \triangle$ for all $t\ge \tau$.
   \end{itemize}   
\end{defn}

The limit in \eqref{taue} exists because $\cE^\e \nearrow \cE$ as $\e \to 0$ and therefore $(\tau_\e)_\e$ is monotone.

\begin{lem}\label{l:exun:X}
Let $\gamma > 0$, $N \geq 1$, $b \in \{-1,1\}^N$, and $(\Omega,\cF,(\cF_t)_{t\ge 0},\PP)$ be a filtered probability space on which is defined a $2N$-dimensional $(\cF_t)_{t\ge 0}$-adapted Brownian motion $(B_t)_{t\ge 0}$. For all $x\in \cE \cup \{\triangle\}$,
there exists a unique strong solution $(X_t)_{t\ge 0}$ to \eqref{SDE} with $X_\circ = x$. Moreover, setting $\PP_x$ as the law of $(X_t)_{t\ge 0}$, the tuple $(\Omega, \cF, \PP, (\cF_t)_{t\ge 0}, (X_t)_{t\ge 0},(\PP_x)_{x\in \cE\cup\{\triangle\}})$ is an $\cE\cup\{\triangle\}$-valued strong Markov process which is a.s.\ continuous on $[0,\tau)$.
\end{lem}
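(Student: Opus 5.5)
The proof is a localization argument: on each separated set $\cE^\e$ the drift is regular, so we solve a truncated SDE there and patch the solutions together as $\e \to 0$. The trivial case $x = \triangle$ is handled by setting $X_t = \triangle$ for all $t$; then $\tau = 0$ and all requirements are void. For $x \in \cE$, fix a sequence $\e_k \searrow 0$ with $\e_1 < \min_{i \neq j}|x^i - x^j|$, so that $x \in \cE^{\e_1}$.

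\textbf{Step 1: truncated equations.} For each $k$, choose a globally Lipschitz and bounded $F_k : (\R^2)^N \to (\R^2)^N$ with $F_k = F$ on $\cE^{\e_k}$. To build it, replace $f$ by a smooth, bounded, globally Lipschitz $f_k$ that agrees with $f$ on $\{|z| \geq \e_k\}$, and define $F_k$ by the same pairwise formula.

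\textbf{Step 2: solve and patch.} By the classical Itô theorem, the SDE $\dd X^{(k)} = \dd B + F_k(X^{(k)})\dd t$, $X^{(k)}_0 = x$, has a unique strong solution. This solution is adapted to the completed filtration generated by $B$ (and $x$), since it can be obtained by Picard iteration. Let $\tau^{(k)} := \inf\{t : X^{(k)}_t \notin \cE^{\e_k}\}$. Since $F_k = F_{k+1}$ on $\cE^{\e_k}$, pathwise uniqueness of the $(k+1)$-equation stopped at $\tau^{(k)}$ gives $X^{(k+1)} = X^{(k)}$ on $[0,\tau^{(k)}]$ a.s. Hence $\tau^{(k)}$ is nondecreasing. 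We set $X_t := X^{(k)}_t$ for $t \leq \tau^{(k)}$, $\tau := \lim_k \tau^{(k)}$, and $X_t := \triangle$ for $t \geq \tau$. By construction, $\tau^{(k)}$ coincides with $\tau_{\e_k}$ from \eqref{taue}, and $\tau_\e$ is monotone in $\e$, so the limit along $\e_k$ equals the limit in \eqref{taue}. The integral equation holds on each $[0,\tau_{\e_k})$, hence on $[0,\tau)$. Measurability with respect to $\sigma(X_\circ,\cF^B_t)$ is inherited from the $X^{(k)}$; on $\{t \geq \tau\}$ it follows because $\tau$ is a stopping time of that filtration. Continuity on $[0,\tau)$ is clear.

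\textbf{Step 3: uniqueness.} Let $Y$ be any strong solution with $Y_\circ = x$, and let $\tau^Y_{\e_k}$ be its exit time from $\cE^{\e_k}$. Up to $\tau^Y_{\e_k} \wedge \tau_{\e_k}$, both $X$ and $Y$ solve the Lipschitz equation with drift $F_k$, so they agree there. Consequently the exit times coincide, and letting $k \to \infty$ gives $X = Y$ on $[0,\tau)$, with $\tau^Y = \tau$. After $\tau$ both equal $\triangle$.

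\textbf{Step 4: strong Markov property.} This is the step requiring the most care. Write $X$ as a measurable functional $\Phi(x, B)$, which strong existence and uniqueness provide (Yamada–Watanabe type reasoning, or directly from the Picard construction of each $X^{(k)}$). For a finite stopping time $S$, the shifted process $\tilde B_t := B_{S+t} - B_S$ is a Brownian motion independent of $\cF_S$. The process $(X_{S+t})_{t\ge0}$ solves the equation driven by $\tilde B$ and started at $X_S$; this uses that $\triangle$ is absorbing and that the exit times shift consistently. By uniqueness, $X_{S+\cdot} = \Phi(X_S,\tilde B)$, and therefore $\E[G(X_{S+\cdot}) \mid \cF_S] = \E_{X_S}[G]$.

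The main technical point is the joint measurability of $x \mapsto \PP_x$ on $\cE \cup \{\triangle\}$. Since the measurable-functional representation of $\Phi$ is needed for this, I would obtain it by first applying the strong Markov property to each Lipschitz-driven $X^{(k)}$, which is standard for SDEs with Lipschitz coefficients. The property then transfers to $X$ on the events $\{S < \tau_{\e_k}\}$, and we pass to the limit in $k$.
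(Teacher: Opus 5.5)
Your proposal is correct and follows essentially the same route as the paper. Both arguments truncate the drift so that it agrees with $F$ on $\cE^\e$, patch the truncated strong solutions by pathwise uniqueness up to the exit times $\tau_\e$, prove uniqueness by the same localization, and obtain the strong Markov property by transferring it from the truncated processes on $\{\sigma+t<\tau_\e\}$ and letting $\e\to0$; your alternative via a measurable solution map $\Phi(x,B)$ is the standard equivalent argument. Your explicit bounded, globally Lipschitz pairwise kernel $f_k$ (taken odd, so the self-interaction term still vanishes) also makes precise the paper's unspecified ``smooth extension'' $F_\e$.
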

\comm{[$\PP_x$ is on $\Omega$, and 'law' and 'distribution' mean the same thing]}

\begin{proof}[Proof of Lemma \ref{l:exun:X}]
As preparation, we consider for every $x\in (\R^2)^N$ and every $\e > 0$ the smoothed SDE
\begin{align} \label{SDE:Xe}
    X^\e_t = x + B_t + \int_0^t F_\e (X^\e_s)\dd s
    \qquad \mbox{for all } t\ge 0,
\end{align}
where $F_\e : (\R^2)^N \to (\R^2)^N$ is a smooth extension of $F|_{\cE^\e}$ to $(\R^2)^N$. 
It is classical (see for example \cite[Chapter 5]{KaratzasShreve98}) that \eqref{SDE:Xe} has a unique strong solution $(X^\e_t)_{t\ge 0}$.
Moreover, for each $\e > 0$, $(\Omega,\cF,\PP,(\cF_t)_{t\ge 0}, (X^\e_t)_{t\ge 0},(\PP^\e_x)_{x\in (\R^2)^N})$ is a $(\R^2)^N$-valued strong Markov process, where for all $x\in (\R^2)^N$, $\PP^\e_x$ is the law of $(X^\e_t)_{t\ge 0}$ starting from $x$, with corresponding expectation $\E^\e_x$. 
 
Next we prove the existence of a strong solution $(X_t)_{t \ge 0}$ for any $x \in \cE \cup \{\triangle\}$ by constructing one. If $x  = \triangle$, then $X_t := \triangle$ for all $t \ge 0$ clearly defines a strong solution. If $x \in \cE$, then, for any $\e$, let $(X^\e_t)_{t\ge 0}$ be as defined from \eqref{SDE:Xe}.
According to $F = F_\e$ on $\cE^\e$, the definition of $\tau_\e$ and the pathwise uniqueness property of the processes $(X^\e_t)_{t\ge 0}$ for all $\e >0$, we have for all $\e>\eta>0$, a.s. \comm{[This interval is open at $\tau_\e$ only because it may be infinite.]}
$$
\mbox{ for all } t\in [0,\tau_\e) \quad X^\e_t = X^{\eta}_t.
$$
Recall that $(\tau_\e)_{\e>0}$ is monotone, and thus the a.s.\ limit $\tau := \lim_{\e \to 0} \tau_\e$ exists. This allows us to define for all $t\in [0,\tau)$, $X_t := \lim_{\e \to 0} X^\e_t$, since this sequence is a.s.\ stationary. Moreover, passing to the limit $\e \to 0$ in \eqref{SDE:Xe}, first for a fixed stopping time $\tau_\eta$, and then $\eta \to 0$, we obtain
$$
\mbox{ for all } t\in [0,\tau), \quad X_t = x + B_t + \int_0^t F (X_s)\dd s.
$$   
We further set $X_t := \triangle$ for all $t\ge \tau$. This shows that $(X_t)_{t \ge 0}$ is a strong solution.

The uniqueness of strong solutions is trivial for $x = \triangle$. For $x \in \cE$, we observe that any strong solution $(X_t)_{t \ge 0}$ satisfies the smoothed SDE \eqref{SDE:Xe} on $[0, \tau_\e)$ for all $\e$. From the uniqueness of $(X_t^\e)_{t \ge 0}$, it follows that $(X_t)_{t \ge 0}$ is unique on $[0, \tau_\e)$. Then, taking $\e \to 0$ as above, it follows that $(X_t)_{t \ge 0}$ is unique on $[0, \tau)$. The uniqueness on $[\tau, \infty)$ is obvious. 

Finally, we prove the strong Markov property of $(\Omega, \cF, \PP, (\cF_t)_{t\ge 0}, (X_t)_{t\ge 0},(\PP_x)_{x\in \cE\cup\{\triangle\}})$. By the strong Markov property of $(\Omega, \cF, \PP, (\cF_t)_{t\ge 0}, (X_t^\e)_{t\ge 0},(\PP_x^\e)_{x\in (\R^2)^N})$ for any $\e$, we have for any $x \in (\R^2)^N$, any $(\cF_t)_{t\ge 0}$-stopping time $\sigma$,
any $t\ge 0$, any $f\in C_c(\cE)$ (with $0$-valued extension to $(\R^2)^N$ and $\triangle$), any $A \in \cF_\sigma$, and any $\e >0$ \comm{[I did the MP wrong before. Lesson: $\sigma, \tau_\e, X_t^\e$ etc are RVs, i.e.\ fcts on $\Omega$. Their distr-ns are fixed wrt $\PP$, but they can ALSO be integrated against other distr-ns, such as $\PP_x^\e$ at various $x$; even to random $x$.]} 
\comm{\quad [$f$ needs to be extended (to some finite value), because the below is supposed to be defined for all $t$.]}
$$
\E_x^\e[\indiq_Af(X^\e_{\sigma+t})\indiq_{\sigma+t<\tau_\e}] 
= \E_x^\e[\indiq_A \indiq_{\sigma<\tau_\e} \E_{X^\e_\sigma}^\e [f(X^\e_t)\indiq_{t<\tau_\e}]].
$$
Since $X_t = X^\e_t$ for all $t\in [0,\tau_\e)$, we directly get that 
$$
\E_x[\indiq_Af(X_{\sigma+t})\indiq_{\sigma+t<\tau_\e}] = \E_x[\indiq_A \indiq_{\sigma <\tau_\e}\E_{X_\sigma} [f(X_t)\indiq_{t<\tau_\e}]],
$$
where $\E_x$ is the expectation with respect to $\P_x$.
Letting $\e\to 0$ and applying the dominated convergence theorem we obtain the result.
\end{proof}

Definition \ref{d:s-sol} and Lemma \ref{l:exun:X} allow for convenient terminology.
Given the setting in Definition \ref{d:s-sol}, we call $(X_t)_{t\ge 0}$ an $\SC(b,X_\circ)$-process and $\tau$ from \eqref{taue} its lifetime. If the Brownian motion is not specified, then we call $(X_t)_{t\ge 0}$ an $\SC(b,X_\circ)$-process if there exists an $(\cF_t)_{t \ge 0}$-adapted Brownian motion $(B_t)_{t \ge 0}$ for which $(X_t)_{t\ge 0}$ is a strong solution, and refer to $(B_t)_{t \ge 0}$ as the associated Brownian motion. If the initial condition is not specified, then we call $(X_t)_{t\ge 0}$ an $\SC(b)$-process if it is a strong solution with $X_\circ = X_0$.

Obviously, $\SC(b,x)$-processes depend on $\gamma$, $N$ and $(\Omega,\cF,(\cF_t)_{t\ge 0},\PP)$ too. We do not adopt this dependence in the notation, because $\gamma$ is kept fixed in all arguments, $N$ follows from the number of coordinates in $b$, and we only specify $(\Omega, \cF, (\cF_t)_{t\ge 0}, \PP)$ whenever this level of detail is helpful for understanding theorems and proofs.

\subsection{Properties of $\SC(b)$-processes}
\label{s:SC:prop}

In this section we build the five tools of $\SC(b)$-processes $(X_t)_{t \ge 0}$ that we mentioned in Section \ref{s:intro:proof}. These tools are given by Proposition \ref{p:invar}\ref{p:invar:scale}, Lemma \ref{l:R:sBP} and Lemma \ref{l:Girsanov}. 
 
First, we collect some simple observations on $\SC(b)$-processes \cite[Section 2.1 and Proposition 2.2]{VanMeursPeletierSlangen25}:
\begin{prop}[Invariances of $\SC(b)$-processes] \label{p:invar}
Let $\gamma > 0$, $N \geq 1$, $b \in \{-1,1\}^N$ and \\ $(\Omega,\cF,(\cF_t)_{t\ge 0},\PP)$ be a filtered probability space on which is defined a standard Brownian motion $(B_t)_{t\ge 0}$ and an associated $\SC(b)$-process $(X_t)_{t\ge 0}$ with lifetime $\tau$. Then:
\begin{enumerate}[label=(\roman*)]
  \item \label{p:invar:b} (invariance with respect to flipping signs) $(X_t)_{t\ge 0}$ is also a $\SC(-b)$-process with respect to the same Brownian motion $(B_t)_{t\ge 0}$,
  \item \label{p:invar:label} (invariance with respect to relabelling) Let $\sigma$ be a permutation on $\bbrack{1,N}$. Let $b^\sigma := (b^{\sigma(i)})_{i=1}^N$ and define similarly $X_t^\sigma$ and $B_t^\sigma$.
  Then, $(X_t^\sigma)_{t\ge 0}$ is an $\SC(b^\sigma)$-process with respect to the Brownian motion $(B_t^\sigma)_{t\ge 0}$,
  \item \label{p:invar:scale} (scale invariance) For any finite $\cF_0$-measurable r.v.\ $L > 0$, the process $(\tilde X_s)_{s \ge 0}$ defined for all $s\in [0,\tau/L^2)$ by $\tilde X_s := \frac1L X_{L^2 s}$ and for all $s\ge \tau/L^2$ by $\tilde X_s := \triangle$, is an $\SC(b)$-process generated by the rescaled Brownian motion $(L^{-1}B_{s L^2})_{s \ge 0}$. 
  \comm{[1.\ Splitting two cases of $s$ is needed cause $\triangle$ is not divisible (by $L$). In general, it is better to treat all $\triangle$ stuff separately. 2.\ Proof: use $F(\alpha x) = \frac1\alpha F(x)$ and a simple application of the time-change formula for SDEs]}
\end{enumerate}
\end{prop}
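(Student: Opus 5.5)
Each of the three invariances comes down to checking that the transformed process satisfies the three defining properties of a strong solution in Definition~\ref{d:s-sol}. The only structural facts needed about the drift are three identities for $F$: $F$ is unchanged under $b \mapsto -b$, it commutes with relabelling, and it is homogeneous of degree $-1$. All three follow from the formula $F^i(x)=\gamma\sum_j b^ib^j f(x^i-x^j)$ and from $f(\lambda z)=\lambda^{-1}f(z)$ for $\lambda>0$.

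For \ref{p:invar:b}, the product $b^ib^j$ is invariant under $b\mapsto -b$. Hence the drift $F$, the sets $\cE^\e$, and the stopping times $\tau_\e,\tau$ are literally identical for $b$ and $-b$, and there is nothing left to prove.

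For \ref{p:invar:label}, we have $F^{\sigma(i)}(x)=\gamma\sum_j b^{\sigma(i)}b^{\sigma(j)} f(x^{\sigma(i)}-x^{\sigma(j)})$ after reindexing $j\mapsto\sigma(j)$. This gives $F_{b}(x)^\sigma=F_{b^\sigma}(x^\sigma)$. A permutation of coordinates of a standard $2N$-dimensional Brownian motion is again an adapted standard Brownian motion with the same canonical filtration. Moreover, $\cE^\e$ is permutation invariant, so the lifetime is unchanged. Measurability is preserved because $X^\sigma_t$ is a fixed linear function of $X_t$.

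For \ref{p:invar:scale}, we first check that $\tilde B_s:=L^{-1}B_{L^2s}$ is a standard Brownian motion with respect to the time-changed filtration $\tilde\cF_s:=\cF_{L^2 s}$. This holds because $L$ is $\cF_0$-measurable and positive: conditionally on $\cF_0$, Brownian scaling applies, and L\'evy's characterization gives continuity of paths, the martingale property and quadratic variation $s\,\mathrm{Id}$. Next, for $s<\tau/L^2$ we substitute $u=L^2 s'$ in the integral equation:
\[
\tilde X_s=\frac1L X_0+\tilde B_s+\frac1L\int_0^{L^2s}F(X_u)\,\dd u=\tilde X_0+\tilde B_s+\int_0^{s}L\,F(L\tilde X_{s'})\,\dd s',
\]
and $L F(L\tilde X)=F(\tilde X)$ by homogeneity. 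The stopping times satisfy $\tilde\tau_\e=\tau_{L\e}/L^2$, so $\tilde\tau=\tau/L^2$, which is exactly the definition of $\tilde X$ on $[\tau/L^2,\infty)$ via $\triangle$. The main obstacle is the measurability requirement: $\tilde X_s$ must be $\sigma(\tilde X_0,\cF^{\tilde B}_s)$-measurable. The issue is that the random $L$ enters both $\tilde X_0$ and $\tilde B$, so one cannot directly transport the measurability of $X_{L^2 s}$ with respect to $\sigma(X_0,\cF^B_{L^2 s})$. I would resolve this through the construction in Lemma~\ref{l:exun:X}. Pathwise uniqueness there shows that the strong solution driven by $(\tilde X_0,\tilde B)$ exists and is unique, and it is a measurable functional of $(\tilde X_0,\tilde B)$. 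Since $\tilde X$ solves the same equation with the same data, the two coincide a.s., which yields the required measurability up to null sets.
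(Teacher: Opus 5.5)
Your proposal is correct and follows the route the paper indicates. The paper does not write out a proof: it cites \cite[Section 2.1 and Proposition 2.2]{VanMeursPeletierSlangen25}, and its scaling part rests on $F(\alpha x)=\alpha^{-1}F(x)$ together with a time change in the integral equation, which is exactly what you do. Your use of the time-changed filtration $(\cF_{L^2 s})_{s\ge 0}$ and your pathwise-uniqueness argument for the $\sigma(\tilde X_0,\cF^{\tilde B}_s)$-measurability when $L$ is random supply details the paper leaves implicit. Lemma \ref{l:exun:X} is stated only for deterministic initial data, but since the noise is additive the solution is a pathwise functional of the initial datum and the driving path, so the extension to random $\tilde X_0$ is immediate.
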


The results in the remainder of this section are inspired by \cite[Section 5]{Tardy24} and \cite{FournierTardy25}. There, similar statements are proved for the Keller--Segel system \eqref{KS}. It turns out that these proofs apply with relatively minor modifications to $\SC$-processes. Nevertheless, we present the proofs in full detail, and reformulate the statements for later use.

First, we examine the processes $(M(X_t))_{t \ge 0}$ and $(R(X_t))_{t \ge 0}$, which we recall from \eqref{RK} to be the local empirical mean $(M^K(X_t))_{t \ge 0}$ and the local dispersion $(R^K(X_t))_{t \ge 0}$ with $K = \bbrack{1,N}$. Since $X_t$ may equal the abstract one-point compactification point $\triangle$ of $\cE$, the definitions of these processes require an extension of the functions $M^K$ and $R^K$. For all $K \subset \bbrack{1,N}$ we define $M^K(\triangle) := \triangle$ and $R^K(\triangle) := \triangle$ with the following understanding: these values are abstract points different from the one-point compactification of $\cE$ such that $(M^K(X_t))_{t \ge 0}$ and $(R^K(X_t))_{t \ge 0}$ are always discontinuous at $t = \tau$ on the event $\{\tau < \infty\}$.

In the deterministic setting, the evolutions of $(M(X_t))_{t \ge 0}$ and $(R(X_t))_{t \ge 0}$ are very simple (recall \eqref{pfxd}). 
The following lemma (counterpart of \cite[Lemma 5.2]{Tardy24}) treats the stochastic case. It states that, roughly speaking, for an $\SC(b)$-process $(X_t)_{t \ge 0}$ with lifetime $\tau$, $(M(X_t))_{t \ge 0}$ is a $2$-dimensional Brownian motion scaled by $N^{-1/2}$ and killed at $\tau$, $(R(X_t))_{t \ge 0}$ is a $\SB(\delta)$-process killed at $\tau$, where we recall from \eqref{delta} and \eqref{deltaK} that
\[
 \mbox{ for all } K\subset \{ 1,\dots, N\}, \quad \delta_K = \gamma \bigg( \Big( \sum_{i \in K} b^i \Big)^2 - |K| \bigg) + 2(|K|-1) 
 \quad \mbox{ and }\quad 
  \delta = \delta_{\bbrack{1,N}},
\]
and that both processes are independent up to $\tau$ knowing the initial condition. The rough part of this statement is the `independence up to $\tau$'. 
 
\begin{lem}[Properties of $(M(X_t))_{t \ge 0}$ and $(R(X_t))_{t \ge 0}$] \label{l:R:sBP}
Let $\gamma > 0$, $N \geq 1$, $b \in \{-1,1\}^N$, $x \in \cE$, and $(\Omega,\cF,(\cF_t)_{t\ge 0},\PP)$ be a filtered probability space on which is defined a standard Brownian motion $(B_t)_{t\ge 0}$ and an associated $\SC(b,x)$-process $(X_t)_{t\ge 0}$ with lifetime $\tau$.
Then, even if it means to enlarge the probability space,
there exists two independent processes $(S_t)_{t\ge 0}$ and $(Z_t)_{t\ge 0}$ such that
\begin{itemize}
    \item $(S_t)_{t\ge 0}$ is a $2$-dimensional Brownian motion scaled by $N^{-1/2}$ and started at $M(x)$,
    \item $(Z_t)_{t\ge 0}$ is $\SB(\delta, R(x))$-process (recall that for $\delta < 0$, $(Z_t)_{t\ge 0}$ is stopped at $0$),
    \item for all $t\in [0,\tau)$, $M(X_t) = S_t$ and $R(X_t) = Z_t$.
\end{itemize} 
\end{lem}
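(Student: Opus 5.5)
Apply It\^o's formula to $M(X_t)$ and $R(X_t)$ on $[0,\tau_\e)$, where $X$ stays in $\cE^\e$ and $F$ is smooth, and then let $\e\to0$. For the mean, $\sum_i F^i\equiv 0$ gives
\[
M(X_t)=M(x)+\tfrac1N\sum_{i=1}^N B^i_t \qquad\text{for } t<\tau .
\]
So we may take $S_t := M(x)+\frac1N\sum_i B^i_t$ for all $t\ge0$. This is a 2-dimensional Brownian motion with covariance $\frac1N I_2$, i.e.\ a standard Brownian motion scaled by $N^{-1/2}$.

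For the dispersion, write $R(y)=|Py|^2$, where $P$ is the orthogonal projection of $(\R^2)^N$ onto the subspace $\{M(y)=0\}$, which has dimension $2N-2$. Then $\nabla R(y)=2Py$ and $\Delta R = 2\,\mathrm{tr}P = 2(2N-2)$. Since $\sum_iF^i=0$, we have $F=PF$ and $Py\cdot F(y)=y\cdot F(y)$, which by \eqref{yFy} equals $\frac\gamma2((\sum_i b^i)^2-N)$. It\^o's formula then gives, for $t<\tau$,
\[
R(X_t)=R(x)+\delta t+\int_0^t 2\,PX_s\cdot \dd B_s .
\]
The martingale part has quadratic variation $4\int_0^t R(X_s)\,\dd s$.

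Next define a 1-dimensional Brownian motion
\[
\beta_t:=\int_0^{t\wedge\tau}\frac{PX_s}{|PX_s|}\cdot \dd B_s+W_{t}-W_{t\wedge\tau}.
\]
Here $W$ is an independent Brownian motion obtained by enlarging the space, which continues the noise after $\tau$. Note that $R(X_s)>0$ for $s<\tau$, since $X_s\in\cE$ and $N\ge2$; the case $N=1$ is trivial. Then $R(X_t)=R(x)+\delta t+\int_0^t2\sqrt{R(X_s)}\,\dd\beta_s$ on $[0,\tau)$.

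Let $Z$ be the strong solution of \eqref{R:Bessel:SDE} driven by $\beta$ and stopped at $0$, given by Lemma \ref{l:sqBessel}. Pathwise uniqueness, applied up to $\tau_\e$ and then letting $\e\to0$, gives $Z_t=R(X_t)$ on $[0,\tau)$. Here we use that $R(X_t)>0$ before $\tau$, so $Z$ does not hit $0$ before $\tau$ and stopping at $0$ plays no role there. When $\delta\le0$, Lemma \ref{l:sqBessel} only yields a solution up to the hitting time $\sigma$, and we extend it by freezing at $0$. The identity holds up to that hitting time, and $\sigma\ge\tau$ on this interval.

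The remaining point is the independence of $S$ and $Z$. The processes $S$ and $\beta$ are continuous local martingales with respect to the common enlarged filtration. Their cross-variation is
\[
\dd\langle S^k,\beta\rangle_t=\frac1N\sum_i\frac{(PX_t)^{i,k}}{|PX_t|}\,\dd t=0 ,
\]
because $\sum_i (PX)^i = N\,M(PX)=0$; after $\tau$ it vanishes since $W$ is independent of $B$. By L\'evy's characterization, $(\sqrt N (S-M(x)),\beta)$ is therefore a standard 3-dimensional Brownian motion. Hence $S$ and $\beta$ are independent. Since $Z$ is a strong solution and thus a measurable functional of $\beta$ and the deterministic $R(x)$, $Z$ is independent of $S$.

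The main obstacle I expect is the care needed around $\tau$ and the filtration: checking that the defined $\beta$ is a Brownian motion in a filtration for which $S$ is also a martingale, and making the pathwise identification $Z=R(X)$ rigorous up to the possibly non-stopping limit $\tau$ via the stopping times $\tau_\e$.
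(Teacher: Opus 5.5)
Your proposal is correct and follows the paper's proof essentially step by step: It\^o on $[0,\tau_\e)$ using $\sum_i F^i=0$ and the constancy of $y\cdot F(y)$; continuation of $\beta$ after $\tau$ by an independent Brownian motion; identification $Z=R(X)$ on $[0,\tau)$ by pathwise uniqueness; and independence of $S$ and $Z$ from the vanishing cross-variation together with L\'evy's characterization. Your projection $P$ is only a compact way to compute $\nabla R$ and $\Delta R$, and you additionally handle $N=1$. The one inaccuracy is in your closing worry: $\tau=\sup_{n}\tau_{1/n}$ is an increasing limit of stopping times and hence itself a stopping time, so $W_t-W_{t\wedge\tau}$ causes no filtration issue.
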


\begin{proof}
First, writing the SDE in Definition \ref{d:s-sol} component-wise and then summing the components, we obtain (recalling that $M(F) = 0$) $M(X_t) = M(x) + M(B_t)$ for all $t \in [0,\tau)$. Thus, taking $S_t := M(x) + M(B_t)$ for all $t \ge 0$, we get both that $M(X_t) = S_t$ on $[0,\tau)$ and that $(S_t)_{t \ge 0}$ is as stated. 

\vspace{0.3cm}

The case of $R$ requires more care; we cannot directly apply It\^o's lemma to $R(X_t)$ because of the singularity in $F$ and the stopping time $\tau$. Instead, we fix $\e > 0$, and recall the smoothed SDE \eqref{SDE:Xe} with global solution $(X_t^\e)_{t \ge 0}$. Then, It\^o's lemma yields (with $\nabla_{y^i} R(y) = 2(y^i - M(y))$ and $\Delta R(y) = 4(N-1)$)
\begin{multline*}
    R(X_t^\e) = R(x) + \int_0^t \Big( 2(N-1) + \sum_{i=1}^N 2 (X_s^{\e,i} - M(X_s^\e)) \cdot F_\e^i(X_s^\e) \Big) d s  \\
    + \sum_{i=1}^N \int_0^t 2 (X_s^{\e,i} - M(X_s^\e)) \cdot d B_s^i
    \qquad \text{for all } t \ge 0.
\end{multline*}
Recall $\tau_\e$ from \eqref{taue},
and note that, by construction, for all $t \in [0, \tau_\e) $,  $X_t^\e = X_t$ and $F_\e(X_t^\e) = F(X_t) $. Then, from the observations in Section \ref{s:intro:proof} that $M(F) = 0$ and $y \cdot F(y)$ is constant in $y \in \cE$, it follows from a simple computation that (for details, see \cite[(4.26)]{VanMeursPeletierSlangen25})
\begin{align} \label{pfzy} 
  R(X_t) = R(x) + 2 \int_0^t \sqrt{R(X_s)} d \beta_s + \delta t
   \qquad \text{for all } t \in [0, \tau^\e),
\end{align} 
where
\begin{equation*}
  \beta_t := \sum_{i = 1}^N \int_0^t \frac{  (X_s^i - M(X_s)) }{\sqrt{ \sum_{j = 1}^N |X_s^j - M(X_s)|^2 }} \cdot d B_s^i
  =: \int_0^t Y(X_s) \cdot d B_s \qquad \text{for all } t \in [0, \tau).
\end{equation*}
Note that the square of the denominator equals $R(X_s) = R(X_s^\e)$, which is bounded from below by $\frac12 (N-1) \e^2 > 0$ on $[0, \tau^\e)$.
It follows from the L\'evy characterization and $\|Y(X_s)\| = 1$ that $(\beta_t)_{t\ge 0}$ is a $1$-dimensional Brownian Motion on $[0,\tau^\e)$. 

Since $\e > 0$ was arbitrary and $\tau^\e \nearrow \tau$ a.s., we obtain that $(\beta_t)_{t\ge 0}$ is a $1$-dimensional Brownian Motion on $[0,\tau)$. Moreover, \eqref{pfzy} holds on $[0,\tau)$. \comm{[To verify this, go ptws in $\omega$]}

Next we construct $(Z_t)_{t \ge 0}$. 
Even if it means to enlarge the probability space, there exists a standard Brownian motion $(W_t)_{t\ge 0}$ independent from $(B_t)_{t\ge 0}$. We set for all $t\ge 0$, 
\begin{equation} \label{pfxj}
   \tilde \beta_t = \indiq_{t<\tau}\beta_t + \indiq_{t\ge \tau}( W_t -W_\tau + \beta_\tau). 
\end{equation}
It is classical that $(\tilde \beta_t)_{t\ge 0}$ is a standard Brownian motion. Let $(Z_t)_{t\ge 0}$ be the $\SB(\delta, R(x))$-process driven by $(\tilde \beta_t)_{t\ge 0}$ with $0$-hitting time $\sigma$ (recall Section \ref{s:BP}). It is clear from \eqref{pfzy} that $Z_t = R(X_t)$ on $[0, \tau \wedge \sigma) = [0, \tau)$. 

\vspace{0.3cm} Finally, it remains to show that $(S_t)_{t\ge 0}$ and $(Z_t)_{t\ge 0}$ are independent processes. This is clear since both SDEs are well-posed (see \cite{RevuzYor99}), $Z_t$ is $(\tilde\beta_t)_{t\ge 0}$-measurable, and $(\tilde \beta_t)_{t\ge 0}$ and $(S_t)_{t\ge 0}$ are independent according to the L\'evy characterization since for all $t\ge 0$ and for all $k\in \{1,2\}$,
\comm{[comput on p.69 2/3]}
$$
\langle \tilde \beta, S \cdot e_k \rangle _t = \int_0^{t\wedge\tau } \sum_{i=1}^N  \frac{(X^{i}_s - S_s) \cdot e_k}{N \sqrt{Z_s}}\dd s = 0,
$$
where $(e_1,e_2)$ is the canonical base of $\R^2$. The existence of the integral is guaranteed by Theorem \ref{t:BP}\ref{t:BP:time-int}.
\end{proof}  

Following Section \ref{s:ODE:ex}, we now examine the processes $(M^K(X_t))_{t \ge 0}$ and $(R^K(X_t))_{t \ge 0}$ for any $K \subset \bbrack{1,N}$ with $1 \le |K| \le N-1$. The strategy is to select disjoint random time intervals $[ \sigma^k, \tilde \sigma^k)$ indexed by $k \ge 0$ on which the particle configuration $X_t$ can be split into sufficiently separated clusters, and to show that each cluster behaves like an independent $\SC$-process with suitable parameters through a Girsanov argument. To make this precise, we introduce notation.
First,
for nonempty $K \subset \bbrack{1,N}$ we introduce similarly to $x^K$ in \eqref{xK}
\begin{align*} 
    b^K := (b^i)_{i \in K} \in \{-1,1\}^{|K|}.
\end{align*}
Note that
\begin{align*} 
  M(x^K) = M^K(x), \quad 
  R(x^K) = R^K(x),
\end{align*} 
with the understanding that `$N$' in the definitions of $M,R$ in Section \ref{s:intro:proof} is $|K|$ as determined by the input.
Second, for any $K\subset \ig 1,N\id$ with $1 \le |K| \le N-1$, we set 
\begin{align}\label{dK}
    \mbox{for all } x\in (\R^2)^N, \quad d^K(x) := \min_{i\in K, j\notin K} \| x^i-x^j\|
\end{align}
as the cluster separation distance between $x^K$ and $x^{K^c}$.
Third, let $\bK$ be a partition of $\ig 1,N\id$ with $|\bK| \geq 2$, where $|\bK|$ is the number of elements of $\bK$. We introduce the minimal cluster separation distance as
     \begin{equation} \label{dbK}
     d^\bK : (\R^2)^N \to [0,\infty), \qquad 
        d^\bK(x) 
        := \min_{K \in \bK} d^K(x).
     \end{equation}
In the special case $\bK = \{K, K^c\}$, we have $d^\bK = d^K = d^{K^c}$; hence $d^\bK$ generalizes $d^K$.    
Fourth, for any $\e >0$, we introduce the set (recall $\cE^\e$ from \eqref{cEe})
$$
\cE^\e_\bK 
:= \big\{ x\in \cE : d^\bK(x) \ge \e \big\}
\supset \cE^\e
$$
with corresponding exit time
$$
\sigma^\e_\bK := \inf \{ t\ge 0 : X_t \notin \cE^\e_\bK\}.
$$
Note that, since $\triangle \notin \cE^\e_\bK$, we have $\sigma^\e_\bK\le \tau$.
Also, for the trivial partition $\bK$ defined by $|\bK| = N$, i.e.\ $\bK$ consists of $N$ singletons, we have $\cE^\e_{\bK} = \cE^\e$.

\begin{lem}[Girsanov argument for the particle clusters $(X_t^K)_{t \ge 0}$] \label{l:Girsanov}
Let $\gamma, \e, T > 0$, $N \geq 1$, $b \in \{-1,1\}^N$, $x \in \cE$, $\bK$ be a partition  of $\bbrack{1,N}$, and $(\Omega,\cF,(\cF_t)_{t\ge 0},\P)$ be a filtered probability space on which is defined a standard $(\cF_t)_{t\ge 0}$-adapted Brownian motion $(B_t)_{t\ge 0}$. Let $(X_t)_{t\ge 0}$ be the $\SC(b,x)$-process with lifetime $\tau$ driven by $(B_t)_{t\ge 0}$. Let $(\sigma^k)_{k\ge 0}$ and $(\tilde \sigma^k)_{k\ge 0}$ be sequences of $(\cF_t)_{t\ge 0}$-stopping times such that a.s.
\[
  \mbox{for all } k\ge 0,
  \quad \sigma^k \leq \tilde \sigma^k \leq \sigma^{k+1} \leq \tau \wedge T,
\]
and a.s.\ (recall $d^\bK$ from \eqref{dbK})
$$
\mbox{for all } k\ge 0, \mbox{ all } t\in [\sigma^k,\tilde \sigma^k), \quad d^\bK(X_t) \ge \e.
$$
Then, even if it means to enlarge the probability space, there exists a probability  
$\Q$ and a constant $C = C(\gamma, \e, T, N) > 0$ with $\frac1C \le \E_\P[(\dd \Q/\dd \P)^2] \le C$ such that, under $\Q$, there exist $\SC(b^K, X_{\sigma^k}^K)$-processes $(\hat X^{K,k}_s)_{s\ge 0}$ indexed by $K\in \bK$ and $k \geq 0$ such that 
\begin{enumerate}[label = {(\roman*)}]
    \item \label{l:Girsanov:indep} the Brownian motions associated to $(\hat X^{K,k}_s)_{s\ge 0}$ for each $K \in \bK$ and each $k \ge 0$ are independent,
    \item \label{l:Girsanov:SCproc} for each $k \ge 0$, each $K \in \bK$ and all $s \in [0, \tilde \sigma^k - \sigma^k)$, $X^{K}_{s + \sigma^k}
   = \hat X^{K,k}_s$.
\end{enumerate}
\end{lem}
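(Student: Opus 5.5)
The plan is a Girsanov change of measure that removes the inter-cluster part of the drift on the good intervals $\bigcup_k [\sigma^k,\tilde\sigma^k)$. For $x\in\cE$ split $F = F^{\rm in}_\bK + F^{\rm out}_\bK$. Here $F^{{\rm in},i}_\bK(x) := \gamma\sum_{j\in K} b^ib^j f(x^i-x^j)$ for $i\in K\in\bK$, which is exactly the drift of the $\SC(b^K)$-system acting on $x^K$. The remainder $F^{\rm out}_\bK$ only involves pairs $i,j$ in different clusters, so it satisfies $|F^{\rm out}_\bK(x)|\le \gamma N^{3/2}/d^\bK(x)$. Define the predictable process
\[
  \theta_t := \sum_{k\ge 0} \indiq_{[\sigma^k,\tilde\sigma^k)}(t)\, F^{\rm out}_\bK(X_t).
\]
It is well defined because the intervals are disjoint and contained in $[0,\tau\wedge T]$, where $X_t\in\cE$. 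By the hypothesis $d^\bK(X_t)\ge\e$ on these intervals, we get $|\theta_t|\le \gamma N^{3/2}/\e$, and $\theta_t=0$ for $t\ge T$.

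Set $\dd\Q/\dd\P := \exp\big(-\int_0^T\theta_s\cdot\dd B_s - \frac12\int_0^T|\theta_s|^2\dd s\big)$. Since $\theta$ is bounded, Novikov's criterion applies and $\Q$ is a probability measure. The two-sided second moment bound comes from writing
\[
  (\dd\Q/\dd\P)^2 = \mathcal{E}(-2\theta)_T\,\exp\Big(\int_0^T|\theta|^2\dd s\Big).
\]
This gives $\E_\P[(\dd\Q/\dd\P)^2]\le \exp(T\gamma^2N^3/\e^2)=:C$. The lower bound $\ge 1$ is Jensen's inequality, since $\E_\P[\dd\Q/\dd\P]=1$. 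By Girsanov, $W_t := B_t + \int_0^t\theta_s\dd s$ is a $2N$-dimensional $\Q$-Brownian motion. On each $[\sigma^k,\tilde\sigma^k)$ it satisfies $\dd X_t = \dd W_t + F^{\rm in}_\bK(X_t)\dd t$, so each block $X^K$ solves $\dd X^K_t = \dd W^K_t + F^{(b^K)}(X^K_t)\dd t$. Here $F^{(b^K)}$ denotes the $\SC(b^K)$ drift, and the blocks $(W^K)_{K\in\bK}$ are independent $\Q$-Brownian motions.

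Next, construct $\hat X^{K,k}$. Enlarge the space with a family of independent Brownian motions $(\tilde W^{K,k})_{K,k}$, independent of everything else, and glue: $\hat W^{K,k}_s := W^K_{\sigma^k+s}-W^K_{\sigma^k}$ for $s<\tilde\sigma^k-\sigma^k$, continued afterwards by increments of $\tilde W^{K,k}$, exactly as in \eqref{pfxj}. By the strong Markov property of $W$ under $\Q$ at the stopping time $\sigma^k$, each $\hat W^{K,k}$ is a Brownian motion independent of $\cF_{\sigma^k}$. Let $\hat X^{K,k}$ be the strong $\SC(b^K,X^K_{\sigma^k})$-solution driven by $\hat W^{K,k}$, which exists by Lemma \ref{l:exun:X} because $X^K_{\sigma^k}\in\cE$ as a configuration of $|K|$ points. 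Pathwise uniqueness, applied on $[0,\tau^\eta)$-type localizations as in the proof of Lemma \ref{l:exun:X}, then gives item \ref{l:Girsanov:SCproc}: on $[0,\tilde\sigma^k-\sigma^k)$, $X^K_{\sigma^k+\cdot}$ solves the same equation with the same noise and initial value. For this, note that $\hat X^{K,k}$ cannot die before $\tilde\sigma^k-\sigma^k$, since $X^K$ stays in $\cE$ there.

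The main obstacle is item \ref{l:Girsanov:indep}, the independence across both $K$ and $k$. Across $K$ at fixed $k$ it is immediate, since the coordinate blocks of $W$ are independent and the $\tilde W^{K,k}$ are independent. Across $k$ the difficulty is that the windows are random. I would argue inductively in $k$. The map $\hat W^{K,k}$ is built from increments of $W$ after $\sigma^k$ (plus fresh noise). All $\hat W^{K,k'}$ with $k'<k$ are measurable with respect to $\cF_{\sigma^k}$ enlarged by the fresh noises $\tilde W^{\cdot,k'}$, because $\tilde\sigma^{k'}\le\sigma^{k}$ and the noise used after $\tilde\sigma^{k'}$ is fresh. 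So the strong Markov property at $\sigma^k$ yields independence of $\hat W^{\cdot,k}$ from $(\hat W^{\cdot,k'})_{k'<k}$. The delicate point is doing this carefully with an enlarged filtration so that the fresh Brownian motions keep $W$ a Brownian motion and $\sigma^k$ remain stopping times. I would take the product space with the filtration $\cF_t\otimes\sigma(\tilde W_s : s\le t)$, which works because the added noise is independent.
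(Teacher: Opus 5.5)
Your proposal is correct and is essentially the paper's proof: the same Girsanov change of measure removing the inter-cluster drift on $\bigcup_k[\sigma^k,\tilde\sigma^k)$, the same Novikov plus $\mathcal{E}(-2\theta)$ bound on the second moment, the same gluing with fresh Brownian motions as in \eqref{pfxj}, and pathwise uniqueness for item (ii). Your Jensen lower bound and your strong-Markov induction at $\sigma^k$ spell out steps the paper leaves implicit (it deduces independence across $k$ only from the disjointness of the windows), with two small fixes:
\begin{itemize}
\item At step $k$, use the initially enlarged filtration $\cF_t\vee\sigma(\tilde W^{\cdot,k'}:k'<k)$ rather than the progressive one $\cF_t\otimes\sigma(\tilde W_s : s\le t)$. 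Under the latter, $\hat W^{\cdot,k}$ need not be independent of the $\sigma$-field at time $\sigma^k$.
\item For independence across $K$ at fixed $k$, argue that the glued $2N$-dimensional process is itself a standard Brownian motion. Independence of the blocks of $W$ alone is not enough, because the gluing time is common to all blocks.
\end{itemize}
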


\begin{proof}[Proof of Lemma \ref{l:Girsanov}] The proof is a modification of that of \cite[Lemma 5.3]{Tardy24} and is also inspired from \cite{FournierTardy25}.
    For each $K \in \bK$ and each $i\in K$, writting $F^i = F_{K}^i + F_{K^c}^i$ with
    \[
      F_{K'}^i(y) := \gamma \sum_{j \in K' \setminus \{i\}}  b^i b^j f(y^i - y^j) \qquad \text{for any } K' \subset \bbrack{1,N},
    \]
    and then partitioning the time interval according to $\sigma^k$ and $\tilde \sigma^k$, we obtain (setting $\tilde \sigma^{-1} := 0$)
\begin{equation} \label{pfxk}
  X_t^i 
  = x^i 
    + \bar B_t^i 
    +  \int_0^t F_{K}^i(X_s) \, ds
    + \int_0^t \Big( \sum_{k=0}^\infty \indiq_{[\tilde \sigma^{k-1}, \sigma^k)} (s) \Big) F_{K^c}^i(X_s) \, ds,
\end{equation}
for all $t \in [0, \tau \wedge T)$, where
\begin{gather*}
  \bar B_t^i := B_t^i + \int_0^t \Big( \sum_{k=0}^\infty \indiq_{[\sigma^k, \tilde \sigma^k)} (s) \Big) F_{K^c}^i(X_s) ds
  \qquad \text{for all } t \ge 0.
\end{gather*}
In preparation for applying Girsanov's theorem to $(\bar B_t)_{t \in [0,T]}$, we define for all $t \in [0,T]$
    \begin{align*}
    L_t := \sum_{K \in \bK} \sum_{i \in K} \int_0^t \Big( \sum_{k=0}^\infty \indiq_{[\sigma^k, \tilde \sigma^k)} (s) \Big) F_{K^c}^i(X_s) \cdot d B_s^i.
    \end{align*}
Since for all $K \in \bK$, all $i \in K$, all $j \in K^c$, all $k \geq 0$ and all $s \in [\sigma^k, \tilde \sigma^k)$ we have $|f(X_s^i - X_s^j)| \leq 1/d^\bK(X_s) \le \frac1\e$, we obtain by the Cauchy-Schwarz inequality
    \begin{align} \notag 
        \langle L\rangle_T 
        &= \sum_{k=0}^\infty \int_{\sigma^k}^{\tilde \sigma^k} \sum_{i \in K} \Big|\sum_{K \in \bK}   F_{K^c}^i (X_s) \Big|^2 ds \\ \label{pfxp}
        &\le TN \sum_{K \in \bK} \sum_{i \in K} \gamma^2  |K^c|^2 \frac1{\e^2} < \frac{\gamma^2 T N^4}{\e^2} =: C.
    \end{align}
    Then, using the Novikov criterion,
    we get from Girsanov's theorem that $\dd \Q := \cM(L_T)\dd \P$, with $\cM(L_T) := \exp ( - L_T - \frac12 \langle L\rangle_T )$, is a probability measure under which $(\bar B_t)_{t \ge 0}$ is a $2N$-dimensional Brownian motion. 
    Moreover, we have \comm{[By construction of $\Q$, $\bar B_t$ remains a BM after $T$]}
    $$
    \E_\P[(\dd \Q/\dd \P)^2] = \E_\P [ \exp (-2L_T - \langle L \rangle_T)] = \E_\P [ \exp (-2L_T - 2\langle L \rangle_T) \exp(\langle L\rangle_T)],
    $$
    and the desired bound follows from \eqref{pfxp} and $\E_\P [ \exp (-2L_T - 2\langle L \rangle_T)] = 1$ according to the Novikov criterion.
    
    Reflecting back on \eqref{pfxk}, we get that for any $K \in \bK$, any $i\in K$ and any $k \ge 0$,
    \begin{equation*} 
  X_t^i 
  = X_{\sigma^k}^i 
    + \bar B_t^i - \bar B_{\sigma^k}^i 
    +  \int_{\sigma^k}^t F_{K}^i(X_s) \, ds,
    \qquad \text{for all } t \in [\sigma^k, \tilde \sigma^k).
\end{equation*}
By shifting time as $\tilde X_s^k := X_{s + \sigma^k}$ and setting $\tilde B_s^k := \bar B_{s + \sigma^k} - \bar B_{\sigma^k}$ for all $s \ge 0$, we obtain that $(\tilde B_s^k)_{s \ge 0}$ is a $\Q$-Brownian motion on $[0, \tilde \sigma^k - \sigma^k)$ and that for each $K\in \bK$,
\begin{equation*} 
  \tilde X_s^{k,i} 
  = X_{\sigma^k}^i 
    + \tilde B_s^{k,i} 
    +  \int_0^s F_{K}^i(\tilde X_r^k) \, dr
    \qquad \text{for all } s \in [0, \tilde \sigma^k - \sigma^k) \text{ and each } i \in K.
\end{equation*}
To construct from $\{ (\tilde B_s^k)_{s \ge 0} \}_{k \ge 0}$ a family of independent Brownian motions,
we extend $(\Omega, \cF, \Q)$ if needed to accommodate a family $((W_t^k)_{t \ge 0})_{k \ge 0}$ of standard Brownian motions in $(\R^2)^N$ that are $\Q$-independent of the other random variables previously defined. 
Similar to \eqref{pfxj}, we define for all $k\ge 0$ the $\Q$-Brownian motion $(\hat B_s^k)_{s \ge 0}$ as \comm{["...independent of anything else" is vague. "...independent of the other random variables previously defined" is good.]}
\[
  \hat B_s^k = \indiq_{s<\tilde\sigma^k-\sigma^k} \tilde B_s^k + \indiq_{s\ge \tilde\sigma^k-\sigma^k}( W_s^k -W_{\tilde\sigma^k-\sigma^k}^k + \tilde B_{\tilde\sigma^k-\sigma^k}^k).
\]
Since the intervals $\{ [\sigma^k,\tilde \sigma^k) \}_{k \ge 0}$ are disjoint, $\{ (\hat B_s^k)_{s \ge 0} \}_{k \ge 0}$ is independent.
For each $k \ge 0$, let $(\hat X_s^{k,K})$ be the $\Q$-$\SC(b^K, X_{\sigma^k}^K)$-process generated by $(\hat B_s^k)_{s \ge 0}$ with lifetime $\tau_k^K$. Then, Lemma \ref{l:Girsanov}\ref{l:Girsanov:indep} is satisfied, and
\begin{equation*}
    \hat X_s^{k,i}  
  = X_{\sigma^k}^i 
    + \hat B_s^{k,i} 
    +  \int_0^s F_{K}^i(\hat X_r^k) \, dr
    \qquad \text{for all } s \in [0, \tau_k^K) \text{ and each } i \in K
\end{equation*}
for each $K \in \bK$. Since $\hat B_s^k = \tilde B_s^k$ for all $s \in [0, \tilde \sigma^k - \sigma^k)$, we have by pathwise uniqueness (see Lemma \ref{l:exun:X}) that $\hat X_s^{k,K} = \tilde X_s^{k,K} = X_{s + \sigma^k}^K$ for all $K \in \bK$, all $k \ge 0$ and all $s \in [0, (\tilde \sigma^k - \sigma^k) \wedge \tau_k^K)$. By definition of the lifetime, we obtain that $\tau_k^K \ge \tilde \sigma^k - \sigma^k$, and thus \ref{l:Girsanov:SCproc} is proven.
\end{proof}

Finally, we state a remark and a corollary of Lemmas \ref{l:R:sBP} and \ref{l:Girsanov}. Both are a simplification of Lemma \ref{l:Girsanov} with Lemma \ref{l:R:sBP} applied to the provided processes $(\hat X^{K,k}_s)_{s\ge 0}$, which are all built on the same probability space $(\Omega, \cF, \Q)$. The sole purpose of these two statements is to provide convenient references for the subsequent parts of this paper.

\begin{rem} \label{r:Girs:K}
Let the setting of Lemma \ref{l:Girsanov} be given with $\bK = \{K, K^c\}$ for some index set $K$ with $1 \le |K| \le N-1$. Then, there exists a probability $\Q$ (possibly after enlarging the probability space) equivalent to $\P$ under which there exist $\SB(\delta_K)$-processes $(Z_s^k)_{s\ge 0}$ indexed by $k \in \N$ such that each is generated by an independent Brownian motion and $R^K(X_{s + \sigma^k}) = Z_s^k$ for all $k \in \N$ and all $s \in [0, \tilde \sigma^k - \sigma^k)$.
\end{rem}

In the following corollary we limit 
$(\sigma^k)_{k\ge 0}$ and $(\tilde \sigma^k)_{k\ge 0}$ to a single stopping time $\sigma$ (which fits to the general setting by taking $\sigma^0 = 0$, $\tilde \sigma^0 = \sigma$, and $\sigma^k = \tilde \sigma^k = \tau \wedge T$ for all $k \ge 1$). Then, the processes $(\hat X_s^{K,0})_{s \ge 0}$ constructed in Lemma \ref{l:Girsanov} are independent. 

\begin{cor} \label{c:Girs:bK}
Let $\gamma, \e, T > 0$, $N \geq 1$, $b \in \{-1,1\}^N$, $x \in \cE$, $\bK$ be a partition  of $\bbrack{1,N}$, and $(\Omega,\cF,(\cF_t)_{t\ge 0},\P)$ be a filtered probability space on which is defined a standard Brownian motion $(B_t)_{t\ge 0}$. Let $(X_t)_{t\ge 0}$ be the $\SC(b,x)$-process with lifetime $\tau$ driven by $(B_t)_{t\ge 0}$. 
Then, even if it means to enlarge the probability space, there exists a probability $\Q$ and a constant $C = C(\gamma, \e, T, N) > 0$ with $\frac1C \le \E_\P[(\dd \Q/\dd \P)^2] \le C$ such that, under $\Q$, there exist processes $(S_t^K)_{t\ge 0}$ and $(Z_t^K)_{t\ge 0}$ indexed by $K \in \bK$ such that
\begin{itemize}
    \item for all $K \in \bK$, $(S_t^K)_{t\ge 0}$ is a $2$-dimensional Brownian motion scaled by $|K|^{-1}$ and started at $M(x)$,
    \item for all $K \in \bK$, $(Z_t^K)_{t\ge 0}$ is a $\SB(\delta_K, R(x))$-process,
    \item $\{ (S_t^K)_{t\ge 0}, (Z_t^K)_{t\ge 0} \}_{K \in \bK}$ is a family of independent processes, 
    \item for all $K \in \bK$ and all $t\in [0,\sigma)$, $M^K(X_t) = S_t^K$ and $R^K(X_t) = Z_t^K$, where \comm{[The half open $[0,\sigma)$, because even though $\sigma$ is bdd, it could be equal to $\tau$, but then we have defined $X_\tau = \triangle$ on which $M^K$ is not defined.]}
\[
  \sigma 
  := T \wedge \sigma_\bK^\e 
  = T \wedge \tau \wedge \inf \{ t \ge 0 : d^\bK(X_t) < \e \}.
\]    
\end{itemize} 
\end{cor}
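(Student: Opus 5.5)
The plan is to combine Lemma \ref{l:Girsanov}, applied with a single time window, with Lemma \ref{l:R:sBP}, applied to each cluster process under the new measure. Set $\sigma^0 := 0$, $\tilde \sigma^0 := \sigma = T \wedge \sigma^\e_\bK$, and $\sigma^k = \tilde \sigma^k := \tau \wedge T$ for $k \ge 1$. We must check the hypotheses of Lemma \ref{l:Girsanov}. These are stopping times with $\sigma^0 \le \tilde\sigma^0 \le \sigma^1 \le \tau\wedge T$, since $\sigma^\e_\bK \le \tau$. Moreover, on $[0,\sigma)$ we have $X_t \in \cE^\e_\bK$, that is $d^\bK(X_t)\ge\e$. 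The windows with $k\ge1$ are empty, so they impose no condition.

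Lemma \ref{l:Girsanov} then yields $\Q$ with the stated second-moment bound and the constant $C(\gamma,\e,T,N)$. Under $\Q$ it also yields $\SC(b^K, x^K)$-processes $(\hat X^{K,0}_s)_{s\ge0}$, $K\in\bK$. Each is a strong solution driven by its own Brownian motion $\hat B^{K,0}$, and these Brownian motions are mutually independent by Lemma \ref{l:Girsanov}\ref{l:Girsanov:indep}. Part \ref{l:Girsanov:SCproc} gives $X^K_t = \hat X^{K,0}_t$ on $[0,\sigma)$.

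Next, for each $K\in\bK$, we apply Lemma \ref{l:R:sBP} (with $N$ replaced by $|K|$ and $b$ by $b^K$) to $\hat X^{K,0}$ under $\Q$. This gives a Brownian motion $S^K$ scaled by $|K|^{-1/2}$ and an $\SB(\delta_K, R(x^K))$-process $Z^K$. These agree with $M(\hat X^{K,0}_t) = M^K(X_t)$ and $R(\hat X^{K,0}_t) = R^K(X_t)$ before the lifetime of $\hat X^{K,0}$, and hence on $[0,\sigma)$. The initial values are then $M^K(x)$ and $R^K(x)$, which matches the statement read via $M(x^K)$ and $R(x^K)$.

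The only point needing care, and the main obstacle, is independence of the whole family $\{S^K, Z^K\}_{K\in\bK}$. In the proof of Lemma \ref{l:R:sBP}, $S^K$ is a functional of $\hat B^{K,0}$ alone. $Z^K$ is driven by $\tilde\beta^K$, which is built from $\hat B^{K,0}$ up to the lifetime and an auxiliary Brownian motion $W^K$ afterwards. I would choose the auxiliary motions $W^K$, $K\in\bK$, independent of each other and of everything else when enlarging the space. Then each pair $(S^K,Z^K)$ is measurable with respect to $\sigma(\hat B^{K,0}, W^K)$, and these $\sigma$-algebras are independent across $K$. Within each $K$, independence of $S^K$ and $Z^K$ is exactly the conclusion of Lemma \ref{l:R:sBP}. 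Together this gives full independence of the family. Measurability of the time change and of the strong solution in terms of its driving noise follows from the strong well-posedness in Lemma \ref{l:exun:X} and Lemma \ref{l:sqBessel}.
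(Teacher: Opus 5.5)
Your proposal is correct and follows the paper's own route. The paper obtains Corollary~\ref{c:Girs:bK} in the same way: it applies Lemma~\ref{l:Girsanov} with the single window $\sigma^0=0$, $\tilde\sigma^0=\sigma$, $\sigma^k=\tilde\sigma^k=\tau\wedge T$ for $k\ge1$, and then applies Lemma~\ref{l:R:sBP} under $\Q$ to each of the independent cluster processes $\hat X^{K,0}$. Your proposal also supplies details the paper leaves implicit: you choose independent auxiliary Brownian motions $W^K$ to obtain independence of the whole family, and you take the initial data to be $M^K(x)$ and $R^K(x)$ with scaling $|K|^{-1/2}$, which is the correct reading of the statement.
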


\section{Left-continuity at collision}
\label{s:SDE:cty}

This subsection is the stochastic counterpart of Section \ref{s:ODE:ex}; the goal is to prove the following theorem, which is the stochastic counterpart of Proposition \ref{p:ODE}. 

\begin{prop}[Continuity at collision] \label{p:cty:tau}
    Let $\gamma, T > 0$, $N \geq 1$, $b \in \{-1,1\}^N$, $x \in \cE$, and
    $(X_t)_{t\ge 0}$ be an $\SC(b,x)$-process with lifetime $\tau$. Then, the limit $X_{(\tau \wedge T)-} := \lim_{t \nearrow \tau\wedge T} X_t$ exists in $(\R^2)^N$. Moreover, on the event $\{ \tau < \infty \}$, $X_{\tau-} \in \partial \cE$.
\end{prop}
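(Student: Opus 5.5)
We follow the blueprint of Proposition \ref{p:ODE}, working on a fixed horizon $T$ and pathwise on a full-measure event. If $\tau > T$ there is nothing to prove, since $X$ is continuous on $[0,\tau)$ by Lemma \ref{l:exun:X}. So we work on $\{\tau \le T\}$.

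\textbf{Step 1: no escape to infinity.} By Lemma \ref{l:R:sBP}, on $[0,\tau)$ we have $M(X_t)=S_t$ and $R(X_t)=Z_t$, where $S$ is a continuous Brownian motion and $Z$ is a continuous $\SB(\delta,R(x))$-process. Hence $\sup_{t<\tau\wedge T}|X_t|<\infty$ a.s. As in \eqref{pfyj}, this forces $\liminf_{t\nearrow\tau}\dist(X_t,\partial\cE)=0$ on $\{\tau\le T\}$. It also shows that $M(X_{\tau-})$ and $R(X_{\tau-})$ exist.

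\textbf{Step 2: stochastic analogue of Lemma \ref{l:ODE:ex}.} The key step is to show that, a.s., for every nonempty $K\subset\bbrack{1,N}$,
\[
\liminf_{t\nearrow\tau}R^K(X_t)>0 \quad\text{or}\quad \lim_{t\nearrow\tau}R^K(X_t)=0 .
\]
I would prove this by the same backward induction on $|K|$ used in Lemma \ref{l:ODE:ex}. The cases $|K|\in\{1,N\}$ follow from Step 1. In the inductive step, either some $K^j=K\cup\{j\}$ has $\liminf R^{K^j}=0$, and we conclude by \eqref{pfzt}, or \eqref{pfzs} holds with random $\alpha,u>0$. Then \eqref{pfzp} shows that $R^K(X_t)<\alpha$ implies $d^K(X_t)>\sqrt\alpha$.

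The deterministic bootstrap bound \eqref{pfxu} has to be replaced by a probabilistic one. Fix deterministic rationals $\alpha$ and $a<b$ and define excursion stopping times:
\begin{itemize}
\item $\sigma^k$ is the $k$-th time $R^K(X_t)$ goes below $\alpha/4$ after having been above $\alpha/2$;
\item $\tilde\sigma^k$ is the next time $R^K(X_t)$ exceeds $\alpha/2$, or the next time $d^K(X_t)$ drops below $\sqrt\alpha$, capped at $\tau\wedge T$.
\end{itemize}
On these intervals $d^{\{K,K^c\}}\ge\sqrt\alpha$, so Remark \ref{r:Girs:K} applies. Under an equivalent measure $\Q$, the pieces of $R^K$ on $[\sigma^k,\tilde\sigma^k)$ are independent $\SB(\delta_K)$-processes started at $\alpha/4$. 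A squared Bessel process started at $\alpha/4$ needs a time bounded below in law to reach $\alpha/2$. Infinitely many such upcrossings in finite time $T$ would therefore require infinitely many independent pieces with arbitrarily short durations, which has probability zero by Borel--Cantelli. The final claim then transfers from $\Q$ back to $\P$ by equivalence of the measures.

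Taking a countable union over rational $\alpha$ shows that $R^K$ cannot oscillate between $\liminf=0$ and $\limsup>0$ while $d^K$ stays bounded below. Here is why: on the event of \eqref{pfzs}, $d^K$ stays above $\sqrt\alpha$ whenever $R^K<\alpha$. So oscillation would force infinitely many upcrossings from $\alpha/4$ to $\alpha/2$ before $\tau\le T$.

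\textbf{Step 3: conclusion.} Define the equivalence relation $i\sim j$ iff $\liminf R^{\{i,j\}}(X_t)=0$ and the resulting partition $\bK$, exactly as in the proof of Proposition \ref{p:ODE}. Then for each $K\in\bK$ we have $R^K(X_t)\to0$, and different clusters stay separated by some random $\e>0$ on $[\tau-u,\tau)$.

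It remains to show that $M^K(X_t)$ converges. For this I would use the Girsanov construction once more. Apply Corollary \ref{c:Girs:bK}, or rather Lemma \ref{l:Girsanov} with a single interval starting at a rational time $q$ close to $\tau$ and stopped when $d^\bK<\e$. Under $\Q$, $M^K(X_t)$ coincides there with a continuous Brownian motion $S^K$, so it has a limit as $t\nearrow\tau$. Since $\Q\sim\P$ and we range only over countably many rationals $q,\e$, this holds $\P$-a.s. Then $X^i_{\tau-}=\lim M^K(X_t)$ exists for every $i\in K$.

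Finally, \eqref{pfyi}, which comes from Step 1, gives $X_{\tau-}\in\partial\cE$. The case $\tau>T$ is trivial, which yields existence of $X_{(\tau\wedge T)-}$.

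\textbf{Main obstacle.} Step 2 is the hardest part: replacing the deterministic Lipschitz bound on $R^K$ by the excursion-counting argument. The difficulty is that the separation threshold $\alpha$ and the time $u$ are random and not stopping times. This is why I reduce to countably many deterministic thresholds and use stopping times defined by crossing levels.
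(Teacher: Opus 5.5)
Your proposal is correct and follows the paper's route. Step 2 is the paper's Proposition~\ref{p:R:alte}: backward induction on $|K|$, excursion stopping times with Remark~\ref{r:Girs:K}, and a contradiction between crossing durations summing to at most $T$ and an i.i.d.\ positive sum that diverges (Borel--Cantelli in place of the strong law of large numbers). Steps 1 and 3 reproduce the paper's proof of Proposition~\ref{p:cty:tau}, except that your Girsanov argument for the convergence of $M^K(X_t)$ is unnecessary. Once the clusters of $\bK$ are $\e$-separated on $[\tau-u,\tau)$, the drift $\frac{\gamma}{|K|}\sum_{i\in K}\sum_{j\notin K} b^ib^j f(X^i_s-X^j_s)$ is bounded, so $M^K(X_t)$ is $M^K(B_t)$ plus a Lipschitz function of time and converges pathwise, which is how the paper concludes.
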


The proof requires some preliminaries, and is therefore postponed to the end of this section.

Similar to Section \ref{s:ODE:ex}, we examine the processes $(M^K(X_t))_{t \ge 0}$ and $(R^K(X_t))_{t \ge 0}$ for any non-empty $K \subset \bbrack{1,N}$. 
We recall that for $t \geq \tau$ we have  $X_t = \triangle$ and thus, by definition, $M^K(X_t) = \triangle \notin \R^2$ and $R^K(X_t) = \triangle \notin \R$.
The following properties hold for the same reason as in the deterministic setting: for all $t \in [0,\tau)$, $M^{\{i\}}(X_t) = X_t^i$, $R^{\{i\}}(X_t) = 0$ and the ordering of $R^K(X_t)$ in $K$ in \eqref{pfzt} holds. 
Yet, the stochastic counterpart of Lemma \ref{l:ODE:ex} requires more care. It is a modification of \cite[Proposition 5.4]{Tardy24}.
 
\begin{prop}[Stochastic counterpart of Lemma \ref{l:ODE:ex}] \label{p:R:alte}
Given the setting from Proposition \ref{p:cty:tau}, for all nonempty $K \subset \bbrack{1,N}$
\begin{equation*}
  \liminf_{t \nearrow \tau \wedge T} R^K(X_t) > 0 \text{ a.s.}
  \quad \text{or} \quad
  \lim_{t \nearrow \tau \wedge T} R^K(X_t) = 0 \text{ a.s.}
\end{equation*}
\end{prop}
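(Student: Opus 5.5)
We follow the backward induction of Lemma \ref{l:ODE:ex}, replacing the deterministic bootstrap by a Borel--Cantelli argument on excursions. The statement is understood pathwise: a.s., for each $K$, one of the two alternatives holds. Equivalently, a.s.\ $\liminf_{t\nearrow\tau\wedge T}R^K(X_t)=0$ implies $\limsup_{t\nearrow\tau\wedge T}R^K(X_t)=0$.

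\textbf{Base cases.} For $|K|=1$ the claim is trivial since $R^{\{i\}}=0$. For $K=\bbrack{1,N}$ we use Lemma \ref{l:R:sBP}: $R(X_t)=Z_t$ on $[0,\tau)$ with $Z$ a continuous $\SB(\delta,R(x))$-process, so $\lim_{t\nearrow\tau\wedge T}R(X_t)=Z_{\tau\wedge T}$ exists. In particular the liminf is zero iff the limit is zero.

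\textbf{Inductive step.} Assume the implication holds for all $K'$ with $|K'|=n\ge 3$, and let $|K|=n-1$. On the event where $\liminf R^{K\cup\{j\}}=0$ for some $j\notin K$, the ordering \eqref{pfzt} and the induction hypothesis give $\limsup R^K=0$. It remains to handle the event $A$ where all these liminfs are positive. We write $A\subset\bigcup_{\alpha\in\Q_{>0},\,u\in\Q_{>0}}A_{\alpha,u}$, where on $A_{\alpha,u}$ we have $R^{K^j}(X_t)\ge7\alpha$ for all $t\in[\tau\wedge T-u,\tau\wedge T)$ and all $j\notin K$. Exactly as in \eqref{pfzp}, on $A_{\alpha,u}$ the condition $R^K(X_t)<\alpha$ forces $d^K(X_t)>\sqrt\alpha$. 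Fixing $\alpha$ and $0<\eta<\alpha$, we define stopping times alternating between excursions: $\sigma^k$ is the first time after $\tilde\sigma^{k-1}$ at which $R^K\le\eta$, and $\tilde\sigma^k$ is the first time after $\sigma^k$ at which $R^K\ge\alpha$, or $d^K\le\sqrt\alpha$, or $\tau\wedge T$ is reached. On $A_{\alpha,u}\cap\{\liminf R^K=0,\ \limsup R^K>\alpha\}$, infinitely many excursions start at level $\eta$ and end by reaching level $\alpha$ before $\tau\wedge T$, with $d^K\ge\sqrt\alpha$ throughout each excursion.

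\textbf{Main obstacle.} The key step is to show that this event is null. Here we apply Remark \ref{r:Girs:K} with $\bK=\{K,K^c\}$ and $\e=\sqrt\alpha$. Under an equivalent $\Q$, the pieces $R^K(X_{\sigma^k+s})$ coincide on $[0,\tilde\sigma^k-\sigma^k)$ with independent $\SB(\delta_K,\eta)$-processes $Z^k$. For an upcrossing from $\eta$ to $\alpha$ to occur before time $T$, the process $Z^k$ must reach $\alpha$. Since the total time available is at most $T$ and the durations $\tilde\sigma^k-\sigma^k$ are disjoint, infinitely many upcrossings force infinitely many of them to take time at most $T$, in fact arbitrarily short time. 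But each $Z^k$ needs time bounded below in distribution to go from $\eta$ to $\alpha$, with $\Q(Z^k \text{ reaches } \alpha \text{ within time } h)\le p(h)\to0$ as $h\to0$. Moreover, independence makes the number of upcrossings of the i.i.d.\ sequence completed within total time $T$ a.s.\ finite, by the strong law of large numbers applied to the hitting times of $\alpha$, which are i.i.d.\ and positive. Hence the event is $\Q$-null, and therefore $\P$-null by equivalence. The bound on $\E_\P[(\dd\Q/\dd\P)^2]$ from Lemma \ref{l:Girsanov} transfers this to $\P$ if one prefers a quantitative statement.

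Taking the union over rational $\alpha,u$ and letting $\eta\to0$, we obtain that a.s.\ on $A$, $\liminf R^K=0$ implies $\limsup R^K\le\alpha$ for every rational $\alpha$. This gives $\lim R^K=0$ and closes the induction. The delicate points are that the excursion stopping times keep $d^K\ge\sqrt\alpha$, so that Remark \ref{r:Girs:K} applies, and that the time horizon $T$ makes the Girsanov constant uniform.
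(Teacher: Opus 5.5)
Your overall route matches the paper's: backward induction via \eqref{pfzt}, reduction to deterministic rational $\alpha,u$ with $R^{K^j}\ge 7\alpha$ near the terminal time, excursions of $R^K$ on which $d^K$ is bounded below so that Remark \ref{r:Girs:K} applies, and the strong law of large numbers for i.i.d.\ positive hitting times contradicting the finiteness of the total time.

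The step that fails as written is your definition of the excursion times. Since $\sigma^k$ only requires $R^K\le\eta$, suppose that at some $\sigma^k<\tau\wedge T$ you also have $d^K(X_{\sigma^k})\le\sqrt\alpha$. Nothing excludes this before the window $[\tau\wedge T-u,\tau\wedge T)$, or even at time $0$. Then $\tilde\sigma^k=\sigma^k$, hence $\sigma^{k+1}=\sigma^k$, and the recursion is stuck at that time forever. Your claim that infinitely many excursions reach level $\alpha$ then fails.

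The paper avoids this with hysteresis in both quantities. Its $\sigma^k$ requires $R^K\le\alpha/2$ \emph{and} $d^K\ge\alpha$, while $\tilde\sigma^k$ is triggered by $R^K\ge\alpha$ \emph{or} $d^K\le\alpha/2$, and Girsanov is applied with $\e=\alpha/2$. In your notation: enter on $\{R^K\le\eta,\ d^K\ge\sqrt\alpha\}$, exit on $\{R^K\ge\alpha\}\cup\{d^K\le\sqrt\alpha/2\}$, and use $\e=\sqrt\alpha/2$. Each cycle then crosses a gap, so by continuity of $X$ on $[0,\tau)$ the recursion reaches the window. There the entry condition on $d^K$ is automatic, because $R^K<\alpha$ forces $d^K>\sqrt\alpha$.

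Two further points must be made explicit for the law-of-large-numbers step to close.

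First, infinitely many short upcrossings give no contradiction by themselves. An i.i.d.\ sequence of positive hitting times a.s.\ has infinitely many terms below any $h>0$, so the remark about $p(h)\to0$ does not help. What you need is a random $k_0$ such that, on the bad event, \emph{every} excursion with $k\ge k_0$ lies in the window. Each such excursion then ends by $R^K$ reaching $\alpha$, which gives $\sum_{k\ge k_0}(\tilde\sigma^k-\sigma^k)\le u$.

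Second, the processes $Z^k$ from Remark \ref{r:Girs:K} start at the random values $R^K(X_{\sigma^k})\le\eta$, not at $\eta$, so they are not identically distributed as you claim. The paper uses the comparison theorem to dominate them by $\SB(\delta_K,\alpha/2)$-processes (in your notation, started at $\eta$) driven by the same independent Brownian motions. Their hitting times of $\alpha$ are i.i.d.\ and bounded above by $\tilde\sigma^k-\sigma^k$ for $k\ge k_0$.

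With these repairs your argument coincides with the paper's.
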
  

\begin{proof}
Let $(\Omega,\cF,(\cF_t)_{t\ge 0},\P)$ be a sufficiently large underlying filtered probability space and $(B_t)_{t \ge 0}$ be a Brownian motion associated with $(X_t)_{t \ge 0}$.
Recalling that $R^{\{i\}} \equiv 0$, the assertion follows for $|K|=1$. For $K = \bbrack{1,N}$, we have by Lemma \ref{l:R:sBP} that $(R^K(X_t))_{t\ge 0}$ is uniformly continuous on $[0, \tau\wedge T)$, and thus the assertion follows.

Thus, we may assume that $N \geq 3$ and prove the assertion for $2 \leq |K| \leq N-1$. We reason by backward induction by showing that $\cP(n) =$ ``for all $K\subset \ig 1,N\id$ with $|K| \ge n$, we have
\begin{equation*}
  \liminf_{t \to \tau\wedge T} R^K(X_t) = 0 \text{ a.s.}
  \implies
  \limsup_{t \to \tau\wedge T} R^K(X_t) = 0 \text{ a.s."}
\end{equation*}
is true for all $n\in \ig2,N\id$. Above we have proved that $\cP(N)$ is true.

Suppose $\cP(n)$ holds for some $n \geq 3$. Let $K \subset \bbrack{1,N}$ with $|K| = n-1$. Let
\begin{equation*}
  A := \Big\{ \liminf_{t \nearrow \tau \wedge T} R^K(X_t) = 0, \ \limsup_{t \nearrow \tau \wedge T} R^K(X_t) > 0 \Big\},
\end{equation*}
and assume by contradiction that $\P(A) > 0$. 

On the event $A$, we also have 
\begin{equation*}
  \liminf_{t \to \tau\wedge T} \min_{j \in K^c} R^{K^j}(X_t) > 0, \qquad \mbox{ with } \quad K^j := K \cup \{j\},
\end{equation*}
because the induction hypothesis holds for $K^j$ and
\begin{equation*}
  0
  < \limsup_{t \nearrow \tau \wedge T} R^{K}(X_t)
  \leq \frac k{k-1} \limsup_{t \nearrow \tau \wedge T} \min_{j \in K^c} R^{K^j}(X_t).
\end{equation*}

Next, we will construct an event $B \subset A$ with $\PP(B) > 0$ by reducing $A$ step by step. First, by removing a nullset, we may further assume on $A$ that $(X_t)_{t\ge 0}$ is continuous on $[0,\tau)$ for the usual topology on $(\R^2)^N$.
Since on $A$, $(R^K(X_t))_{t\ge 0}$ is not continuous at $\tau\wedge T$, $(X_t)_{t\ge 0}$ is not continuous at $\tau\wedge T$, and thus $\tau\wedge T = \tau$ (i.e.\ $\tau \leq T$). Second, there exist deterministic constants $\alpha, u > 0$  
such that $\P(A \cap B_{\alpha,u})>0$ where for all $\alpha', u' \in \Q_{>0}$,
\comm{["for the usual topology on $(\R^2)^N$": to stress that we don't consider $\cE_\triangle$]}
\begin{multline*}
B_{\alpha',u'} = \Big\{ (R^K(X_t))_{t\ge 0}  \mbox{ crosses } \Big[ \frac{\alpha'}2, \alpha' \Big] \mbox{ infinitely many during } [\tau-u',\tau) \\
\mbox{ and } \min_{j \in K^c} \inf_{t \in [\tau - u', \tau)} R^{K^j}(X_t) \geq 7 \alpha' \Big\} 
\end{multline*}
Indeed, we have $A\subset \bigcup_{\alpha',u' \in \Q_{>0}} B_{\alpha',u'}$ and $\P (A) >0$. In conclusion, we have $\P(B) > 0$ with
\comm{[The reasoning here is that if $A = \cup_k B_k$ for $B_k \subset A$, then $\PP(A) > 0$ implies that $\PP(B_k) > 0$ for at least one $k$, and that is enough.]} 
\begin{equation*}
    B := A \cap B_{\alpha,u} \cap \{ (X_t)_{t\ge 0} \text{ is continuous on } [0,\tau) \}.
\end{equation*}

\vspace{0.3cm}

The computation leading to \eqref{pfzp} shows that on $B$,
\begin{equation} \label{pfzn}
[\tau - u, \tau) \cap \{ t\ge 0 : R^K(X_t) \leq \alpha \}  \subset   \{ t\ge 0 : d^K(X_t) \geq \alpha \} .
\end{equation}
We define the following sequences of stopping time $(\sigma^k)_{k\ge 0}$ and $(\tilde \sigma^k)_{k\ge 0}$ recursively by (setting $\tilde \sigma^{-1} = 0$ and iterating $k = 0,1,2,\ldots$) 
\begin{align*}
  \sigma^k
  &= \tau \wedge T \wedge \inf \big\{ t \geq \tilde \sigma^{k-1} : R^K(X_t) \leq \frac{\alpha}{2} \ \text{ and } \ d^K(X_t) \geq \alpha \big\}, \\
  \tilde \sigma^k
  &= \tau \wedge T \wedge \inf \Big\{ t \geq \sigma^k : R^K(X_t) \geq \alpha \ \text{ or } \ d^K(X_t) \leq \frac{\alpha}{2} \Big\}.
\end{align*} 
By construction, $\sigma^k \le \tilde \sigma^k \le \sigma^{k+1} \le \tau \wedge T$ for all $k \ge 0$.
Let $\Q$ and $((Z^k_s)_{s\ge 0})_{k\ge 0}$ be as in Remark \ref{r:Girs:K} applied with $\gamma$, $T$, $\e = \alpha/2$, $N$, $b$, $x$, $K$, $(\Omega,\cF,\P)$, $(B_t)_{t\ge 0}$, $(X_t)_{t\ge 0}$, $(\sigma^k)_{k\ge 0}$ and $(\tilde \sigma^k)_{k\ge 0}$. 
We set 
\[
\eta^k := \inf\{s \ge 0 : R^K(X_{s + \sigma^k}) \ge \alpha \}
\quad
\mbox{ for all } k\ge 0. 
\]
Then, on $B$, we obtain from \eqref{pfzn} that for all $k\ge 0$, 
$\tilde \sigma^k < \tau\wedge T$. Moreover, there exists a random $k_0\in \N$ such that $\sigma_k \geq \tau - u$ for all $k \ge k_0$. 
\comm{[This should be seen ptws-ly in $\omega$. A uniform-in-$\omega$ version is also possible; then we should add $\1_{k \ge k_0}$ to the RHS, and then it works for all $k\ge 0$.]}
Then, again from \eqref{pfzn}, for all $k \ge k_0$, we have $\eta^k = \tilde \sigma^k - \sigma^k$, and thus 
\begin{equation}\label{pfxs}
  \eta^k = \inf\{t\ge 0 : Z_t^k \ge \alpha \}    
\end{equation}
and 
\begin{equation}\label{pfxr}
  \sum_{k\ge k_0} \eta_k \le u < \infty.    
\end{equation}

On the other hand, under $\Q$, by the comparison theorem (see e.g.\ \cite[Theorem 3.7, p394]{RevuzYor99}) and $\{(Z^k_t)_{t\ge 0}\}_{k\ge 0}$ being generated by independent Brownian motions, 
for all $k\ge 0$ there exists a $\SB(\delta_K, \alpha/2)$-process $(\tilde Z^k_t)_{t\ge 0}$ such that the processes $(\tilde Z^k_t)_{t\ge 0}$ with $k\ge 0$ are independent 
and $\tilde Z^k_t \ge Z^k_t$ for all $t\ge 0$ and each $k \ge 0$.
Then, noting that the r.v.s $\tilde \eta^k := \inf \{ t\ge 0 : \tilde Z^k_t = \alpha\}$ for $k \ge 0$ are i.i.d.\ with $\Q(\tilde \eta^k>0)>0$, we get from the strong law of large numbers that $\sum_{k\ge k_0} \tilde\eta^k = \infty$ a.s.\ But, from \eqref{pfxs} it follows that $\tilde\eta^k \leq \eta^k$ for all $k \ge k_0$, and that yields a contradiction with \eqref{pfxr}. Thus $\P(A) = 0$.
\end{proof}

\begin{proof}[Proof of Proposition \ref{p:cty:tau}]
   The proof is a modification of the last part of the proof of Proposition \ref{p:ODE} that starts below \eqref{pfxw}. Let $(\Omega,\cF,\P)$ be a sufficiently large underlying probability space. 
We reason pointwise for $\P$-a.e.\ $\omega \in \Omega$. On $\bbrack{1,N}$ let the equivalence relation $i \sim j$ be defined by $\liminf_{t \nearrow \tau \wedge T} R^{\{i,j\}}(X_t) = 0$. Take $\bK$ as the family of equivalence classes. By Proposition \ref{p:R:alte} we have that for all $K \in \bK$ and all $j \notin K$
\begin{equation*} 
  \lim_{t \nearrow \tau \wedge T} R^K(X_t) = 0
  \quad \text{and} \quad
  \liminf_{t \nearrow \tau \wedge T} R^{K \cup \{j\}}(X_t) > 0.
\end{equation*}
Hence, for each distinct $K,K' \in \bK$, each $i \in K$ and each $j \in K'$
\begin{equation} \label{pfxn}
  \lim_{t \nearrow \tau \wedge T}  |X_t^i - M^K(X_t)| = 0
  \quad \text{and} \quad\liminf_{t \nearrow \tau \wedge T} |X_t^i - X_t^j| > 0.
\end{equation}
Then, following the computation done on $M(x(t))$ in the proof of Lemma \ref{l:R:sBP}, since a.s.\ for all $K\in \bK$, all $t\in [0,\tau\wedge T)$, 
$$
M^K(X_t) = M^K(X_0) + M^K(B_t) + \frac1{|K|} \sum_{i \in K} \sum_{j \notin K} \int_0^t b^i b^j f(X_s^i - X_s^j) \, ds,
$$
we get that $(M^K(X_t))_{t \in [0, \tau \wedge T)}$ is uniformly continuous because the drift term is bounded on $[0, \tau \wedge T]$. Hence, $\lim_{t \nearrow \tau \wedge T} M^K(X_t)$ exists. Then, by \eqref{pfxn}, a.s.\ $\lim_{t \nearrow \tau \wedge T} X_t^i$ exists in $\R^2$ for all $i \in K$. Since $K \in \bK$ is arbitrary, we obtain that $\lim_{t \nearrow \tau \wedge T} X_t$ exists in $(\R^2)^N$. 

Finally, on the event $\{\tau \leq T\}$, we prove that $X_{\tau-} \in \partial \cE$. Recall $\tau^\e$ from \eqref{taue} and that $\tau^\e \to \tau$ as $\e \to 0$. We take any countable subsequence (not relabelled) along $\e \to 0$. Let $i_\e \neq j_\e$ be accordingly such that $|X_{\tau^\e}^{i_\e} - X_{\tau^\e}^{j_\e}| = \e$, and take a further subsequence of $\e$ (not relabelled) along which $i_\e = i_*$ and $j_\e = j_*$ are fixed. Then, $X_{\tau-}^{i_*} = X_{\tau-}^{j_*}$ and thus $X_{\tau-} \in \partial \cE$. 
\end{proof}

\section{Characterization of collisions}
\label{s:simple-coll}

With having established Proposition \ref{p:cty:tau} on the continuity of $\SC(b)$-processes $(X_t)_{t \ge 0}$ at their lifetime $\tau$, we characterize in this section the type of collisions that can occur at $\tau$. The main result is Theorem \ref{t:3plus}. First,
our terminology of $\SC(b)$-processes is due for an update:

\begin{rem}[Modification of the definition of $\SC(b)$-processes] \label{r:SC}
Henceforth, for any $\SC(b)$-\\
process $(X_t)_{t\ge 0}$ with lifetime $\tau$ (recall the end of Section \ref{s:SC:sol} for this terminology), we employ Proposition \ref{p:cty:tau} to \textit{redefine} $X_t$ as $X_{\tau-}$ for all $t \geq \tau$. This turns $(X_t)_{t \ge 0}$ into a continuous $(\R^2)^N$-valued process. We still refer to $\tau$ as the lifetime, and consider $(X_t)_{t\ge 0}$ to be frozen on $[\tau, \infty)$. Consequently, for any nonempty $K \subset \bbrack{1,N}$, the processes $(M^K(X_t))_{t\ge 0}$ and $(R^K(X_t))_{t\ge 0}$ are continuous on $\R^2$ and $[0,\infty)$ respectively.
\end{rem}

Next, we introduce terminology on the types of collision that can occur at $\tau$. Precisely, we consider collisions as a geometric property of a (static) configuration $x \in (\R^2)^N$ with signs $b \in \{-1,1\}^N$. Recall the cluster separation distance $d^K$ from \eqref{dK}. 

\begin{defn}[Collision types] \label{d:Kcol}
Let $b \in \{-1,1\}^N$, $x \in (\R^2)^N$ and $K \subset \bbrack{1,N}$. We say that there is a \emph{$K$-collision} 
in $x$ if $|K| \geq 2$, $R^K(x) = 0$ and $d^K(x) > 0$. Moreover, we call a $K$-collision with $|K|=2$ and $\sum_{i\in K} b^i = 0$ a \emph{simple collision}. 
Finally, a \emph{same-sign collision} is a $K$-collision with $|\sum_{i \in K} b^i| = |K|$.
\end{defn}

We give some comments to parse this Definition. If there is a $K$-collision in $x$, then no more and no less particles than those in $K$ collide at the same point, which is $M^K(x) \in \R^2$. Note that a configuration $x$ could contain more than one collision. For instance, for three distinct points $y^1, y^2,y^3 \in \R^2$, the configuration $x := (y^1, y^2,y^2,y^1,y^3,y^2) \in (\R^2)^6$ contains two collisions: a $\{1,4\}$-collision at $M^{\{1,4\}}(x) = y^1$ and a $\{2,3,6\}$-collision at $y^2$.
Finally, simple collisions are collisions between precisely two particles of opposite sign.

The main Theorem \ref{t:3plus} states that $X_\tau$ contains precisely one collision, and that this collision is simple. 
To state this precisely, we recall that $\SC(b,x)$-processes are strong Markov processes defined in Definition \ref{d:s-sol} with update in Remark \ref{r:SC} and that $\cE \subset (\R^2)^N$ (see \ref{cE}) is the set of all configurations $x$ without collisions. In addition, we 
introduce (recall Definition \ref{d:Kcol} on the types of collisions)
\begin{align*}
  \cG_2 
  &:= \{ x \in (\R^2)^N : \text{ there is one simple collision and no other collisions in }x \} \\
  &= \{ x \in (\R^2)^N : \exists! \text{ distinct $i,j $ such that } x^i = x^j. \text{ For this pair, } b^i = -b^j = 1 \}.
\end{align*}
Note that $\cG_2 \subset \partial \cE = \cE^c$.

\begin{thm}[Time-distinct, simple collisions only] \label{t:3plus}
Let $\gamma > 0$, $N \geq 2$, $b \in \{-1,1\}^N$, $x \in \cE$, and
    $(X_t)_{t\ge 0}$ be a $\SC(b,x)$-process with lifetime $\tau$.
If $|\sum_{i=1}^N b^i| = N$, then $\P_x(\tau < \infty) = 0$.
Otherwise, $|\sum_{i=1}^N b^i| < N$ and $\P_x(\tau < \infty, \, X_\tau \in \cG_2) = 1$.
\end{thm}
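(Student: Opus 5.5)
The proof splits into the same-sign case and the mixed case. Both rely on Proposition \ref{p:cty:tau}, which gives that $X_{\tau-}$ exists and lies in $\partial\cE$ on $\{\tau<\infty\}$. Hence on this event $X_\tau$ contains at least one $K$-collision, and the collisions of $X_\tau$ form a partition $\bK$ of $\bbrack{1,N}$ into collision clusters and singletons. The basic localization device is as follows. Fix rational $q<T$ and rational $\e>0$. Consider the event that $q<\tau\le T$, that $d^{K}(X_t)\ge \e$ on $[q,\tau)$, and that $R^K(X_t)\to 0$ as $t\nearrow\tau$. By the strong Markov property (Lemma \ref{l:exun:X}) restarted at $q$, and by Corollary \ref{c:Girs:bK} (or Remark \ref{r:Girs:K}), under an equivalent measure $\Q$ the process $R^K(X_{q+s})$ coincides with a $\SB(\delta_K)$-process up to $\sigma^\e_\bK$. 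Taking countable unions over $q,\e$ reduces every claim of the form ``a $K$-collision occurs with probability $0$'' to a statement about a $\SB(\delta_K)$-process.

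\emph{Excluding same-sign collisions.} If $K$ is same-signed with $|K|\ge2$, then $\delta_K=\gamma(|K|^2-|K|)+2(|K|-1)\ge 2$. By Theorem \ref{t:BP}\ref{t:BP:geq2} the process $\SB(\delta_K)$ never hits $0$, so $\Q$-a.s., and hence $\P$-a.s., no same-sign $K$-collision occurs. If $|\sum_i b^i|=N$, every collision is same-signed, so $\P_x(\tau<\infty)=0$. This settles the first claim. In the mixed case it excludes same-sign pairs, and also same-sign clusters of any size.

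\emph{Simultaneous collisions.} Suppose $X_\tau$ contained two collisions, say $K$ and $K'$, at distinct points. Applying Corollary \ref{c:Girs:bK} with the partition $\{K,K',(K\cup K')^c\}$ and starting from a rational time $q$, the clusters $X^K$ and $X^{K'}$ coincide up to $\tau$ with independent $\SC$-processes under $\Q$. Their lifetimes are independent. The lifetime of a nontrivial $\SC$-process started in $\cE$ has no atoms: for instance, the law of $R(X_\tau)$-hitting is determined by a Bessel hitting time composed with a continuous conditional law, or one can argue via Lemma \ref{l:R:sBP} and independence of the driving noise. Therefore the two lifetimes coincide with $\Q$-probability $0$. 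This shows that $X_\tau$ contains exactly one collision.

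\emph{Excluding clusters with $|K|\ge3$ (the main obstacle).} Here $\delta_K$ may be $<2$, so Bessel arguments alone fail. I would use the scaling invariance, Proposition \ref{p:invar}\ref{p:invar:scale}, together with induction on $|K|$. By induction, any $\SC(b^K)$-process with fewer than $|K|$ particles has only simple, time-distinct collisions.

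For the $|K|$-particle cluster, I would prove a uniform escape estimate. There should exist $p>0$ such that, for every initial configuration $y$ with $M(y)=0$ and $R(y)=1$, the $\SC(b^K,y)$-process has, with probability at least $p$, a lifetime at which $R^K$ is still $\ge c$; equivalently, a proper sub-collision happens before full collapse. For shapes away from $\partial\cE$ this follows from continuity in the initial data together with Proposition \ref{p:2coll}: a simple collision of some opposite pair within unit time, while the others stay separated, has positive probability. Shapes near $\partial\cE$ are handled by the induction hypothesis. A subcluster that is already tight will, by scaling and the quantitative version of Proposition \ref{p:2coll}, collide simply before the whole cluster shrinks. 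Making this compactness argument genuinely uniform is where I expect most of the work.

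Given the uniform estimate, I would define the stopping times $\rho_m$ at which $R^K(X_t)$ first reaches $2^{-m}$ while $K$ stays separated from $K^c$. Rescaling by $L=2^{-m/2}$ and using the strong Markov property together with the Girsanov comparison of Lemma \ref{l:Girsanov}, each dyadic scale gives a conditional probability at least $p'>0$ of escaping. Hence $\P(\rho_m<\tau\ \forall m)\le \prod_m(1-p')=0$, and a collapse of $K$ has probability $0$.

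\emph{Finiteness of $\tau$ in the mixed case.} It remains to show that $\tau<\infty$ a.s. when both signs are present. I would again use scaling, with $L=\sqrt{R(X_t)}$, together with Proposition \ref{p:2coll}. From any normalized configuration, an opposite-sign pair collides within unit rescaled time with probability bounded below, uniformly; for this I would take the closest opposite pair and use Girsanov separation from the rest. Since $R(X_t)$ is a $\SB(\delta)$-process, it returns infinitely often to bounded multiples of $t$. Borel--Cantelli over the resulting unit-rescaled time windows then gives $\P_x(\tau<\infty)=1$, and combined with the previous steps $X_\tau\in\cG_2$ a.s.
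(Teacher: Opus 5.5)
Your single-sign argument is the paper's: localize with Corollary \ref{c:Girs:bK} and use that $\delta_K>2$ for same-sign $K$.

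In the mixed case there is a genuine gap. Both your exclusion of $|K|\ge 3$ collapses and your proof that $\tau<\infty$ rest on a uniform estimate that is never established. The estimate is that from every normalized configuration, with probability bounded below, a simple collision (or a proper sub-collision) occurs within unit rescaled time while the remaining particles stay separated. The justifications you offer do not deliver it:

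\begin{itemize}
\item \emph{Compactness.} The shape set $\{M=0,\,R=1\}\cap\cE$ is not compact inside $\cE$. Its closure meets $\partial\cE$, where the drift is singular and no continuity in the initial data is available, so nothing prevents the escape probability from degenerating there.
\item \emph{Induction.} Your induction hypothesis on smaller systems is qualitative: it says which collisions can occur. It gives no probability bound.
\item \emph{Closest opposite pair.} ``Take the closest opposite pair and use Girsanov separation from the rest'' fails when a same-sign particle is much closer to a member of that pair than $\Dopp$. For example, take two positive particles at distance $\e\ll1$ and a negative one at distance $1$. The pair is then not separated from the rest, and the Girsanov constant blows up as $\e\to0$.
\end{itemize}

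These configurations are exactly $\cE\setminus\cE_N$, and they are the core of the theorem. A tight same-sign cluster of three or more particles is especially delicate, since growth of $R^K$ only says that not \emph{all} of its particles are close.

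The missing idea is Proposition \ref{p:it}. The paper isolates the good set $\cE_N=\{\Dsame\ge c_N\Dopp\}$. On this set your closest-pair argument does work uniformly, by scaling; this is Proposition \ref{p:2coll}. From $\cE\setminus\cE_N$, the paper shows that $\cE_N$ is entered before $\tau$ with probability at least $q(\gamma,N)$, by iterating Lemma \ref{l:it:step}:
\begin{enumerate}
\item Cluster the particles into same-sign groups via associated partitions.
\item Since $\delta_K>2$ for each group, comparison with a $SqB(2,0)$-process makes some $R^K$ macroscopic with uniformly positive probability, while the cluster centres barely move (Girsanov plus scaling).
\item Split that cluster with Proposition \ref{p:cluster:split}. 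This gives a strict refinement whose separation is at least $\frac17\sqrt{2N^{-3}}$ times the previous one.
\end{enumerate}
After at most $N-2$ such steps the configuration lies in $\cE_N$.

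The two propositions are then alternated along stopping times $\eta^k$: in the Proposition \ref{p:it} steps $\eta^{k+1}<\tau$, and in the Proposition \ref{p:2coll} steps $X_{\eta^{k+1}}\in\cE\cup\cG_2$. A geometric-trials argument then yields $\tau<\infty$ and $X_\tau\in\cG_2$ simultaneously. Your separate steps on simultaneous collisions and on $|K|\ge3$ collapses therefore become unnecessary. Incidentally, your atomlessness claim for general lifetimes is only sketched; it becomes trivial once only pairs remain, since their lifetimes are hitting times of $\SB(2-2\gamma)$-processes. Without an argument of the Proposition \ref{p:it} type, your dyadic-scale scheme has no uniform constant $p'$ to work with.
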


We will show that the single-sign case $|\sum_{i=1}^N b^i| = N$ is simple. In the case $|\sum_{i=1}^N b^i| < N$, the strategy of the proof is roughly as follows. We will show, using the scaling invariance of $SC$-processes (see Proposition \ref{p:invar}\ref{p:invar:scale}), that there exists a `good' set $\cE_N\subset \cE$ and a sequence of stopping times $(\eta^k)_{k\ge 0}$ such that 
\begin{itemize}
    \item whenever $X_{\eta^k} \in \cE_N$, then with uniformly positive probability, $\eta^{k+1} = \tau$ and $X_\tau \in \cG_2$, and in the complementary event, $\eta^{k+1}<\tau$,
    \item whenever $X_{\eta^k}\in \cE \setminus \cE_N$, then a.s.\ 
    $\eta^{k+1}<\tau$ and with uniformly positive probability $X_{\eta^{k+1}} \in \cE_N$.
\end{itemize}
This will imply the result by a Borel-Cantelli like argument.

\begin{proof} 
We rely on the key Propositions \ref{p:no:PPcol},  \ref{p:2coll} and \ref{p:it} that are proven in the subsequent subsections.

In the case $|\sum_{i=1}^N b^i| = N$ it suffices to apply Corollary \ref{c:no:PPcol} for each $K \subset \bbrack{1,N}$ with $|K| \geq 2$. In the remainder we assume $|\sum_{i=1}^N b^i| < N$.

The case $N = 2$ is trivial. Indeed, in this case $\delta < 2$ and, on the event $\{\tau < \infty\}$, we have $X_\tau \in \cG_2 \iff \tau$ is the first $0$-hitting time of $R(X_t)$.
Lemma \ref{l:R:sBP} guarantees that $R(X_t)$ is a $\SB(\delta)$-process until its first $0$-hitting time, which is finite by Theorem \ref{t:BP}\ref{t:BP:less2}.
In the remainder we focus on $N \geq 3$.

\vspace{0.3cm}

We construct iteratively a nondecreasing sequence of finite stopping times $(\eta^k)_{k\ge 0}$ such that $\eta^k = \tau$ for $k$ (random) large enough. In preparation for this, a posteriori, let \comm{[\textit{finite} stopping time means $\eta^k < \infty$] }
\begin{equation}\label{cN}
  c_N := \frac1{7^N N^{\frac32 N}} > 0.  
\end{equation}
Let $p = p(N,\gamma) > 0$ and $q = q(N,\gamma) > 0$ be respectively the deterministic probability values from Propositions \ref{p:2coll} and Proposition \ref{p:it}.
For $y \in (\R^2)^N$, set
\begin{equation*}
  \Dopp(y) := \min_{b^i = -b^j } |y^i - y^j|, \qquad 
    \Dsame(y) := \min_{\substack{ b^i = b^j \\ i \neq j }} |y^i - y^j|.
\end{equation*} 
Note that the minima are attained since $N \geq 3$ and $|\sum_{i=1}^N b^i| < N$ imply that the sets over which the minima are taken are nonempty and finite. 
Finally, we put
\begin{equation*}
  \cE_N := \{ y \in (\R^2)^N : \Dsame(y) \geq c_N \Dopp(y) > 0 \} \subset \cE.
\end{equation*}
We define iteratively
the sequence $(\eta^k)_{k\ge 0}$ with $\eta^0 = 0$ and  for all $k\ge 0$: 
\begin{enumerate}[label=(\roman*)]
  \item On the event $B_k := \{X_{\eta^k} \in \cE_N\} \subset \{\eta^k<\tau \}$, Proposition \ref{p:2coll} below and the strong Markov property provide a stopping time $\eta^{k+1}\in (\eta^k,\tau]$ with $\eta^{k+1} < \infty$ such that \comm{[People understand $\P_Y(f)$ always as that $f$ is a function of $\tilde X$, the MP that starts at $Y$. $f$ includes stuff like stopping times and path-dependent events.]} 
  \comm{\quad [The following should be read as $f(x) := \P_x(\cdots)$, where then Prop \ref{p:2coll} gives a LB on the fct $f$ on $\cE_N$ ]}
  \begin{align} \label{pfxz}
    \indiq_{B_k}\P_x( X_{\eta^{k+1}} \in \cE\cup\cG_2|\cF_{\eta^k}) &=\indiq_{B_k},\\\notag
     \indiq_{B_k} \P_x( X_{\eta^{k+1}} \in \cG_2 |\cF_{\eta^k}) 
     &\geq  \indiq_{B_k}p.
  \end{align}
  \item On the event $A_k := \{X_{\eta^k} \in \cE \setminus \cE_N\} \subset \{\eta^k<\tau\}$, Proposition \ref{p:it} below and the strong Markov property provide a stopping time $\eta^{k+1} \in (\eta^k, \tau)$ such that 
  \begin{equation*} 
 \indiq_{A_k} \P_x(B_{k+1} |\cF_{\eta^k} ) \geq \indiq_{A_k} q.
  \end{equation*}   
\comm{[Proposition \ref{p:it} provides $(\eta_x)_{x \in \cE \setminus \cE_N}$. We take $\eta^{k+1} (\omega) = \eta^{k} (
\omega) + \eta(X_{\eta^k}(\omega),\omega)$. This needs no further comment.]}
  \item On the remaining event $(A_k \cup B_k)^c = \{ X_{\eta^k} \in \partial \cE \} = \{\eta^k = \tau\}$, we set $\eta^{k+1} = \tau$.
\end{enumerate}  
We set $L:=\inf \{ k\ge 0 : \eta^k = \tau\} \in \N_{\ge 1} \cup \{\infty\}$.
It is sufficient to show that $L<\infty$. Indeed, we have
\begin{align}\label{eq:lsubset}
    \{ L< \infty\} \subset \bigcup_{k\ge 1} \{\eta^{k-1}<\tau, \, \eta^k = \tau\} \subset \bigcup_{k\ge 1} B_{k-1} \cap\{ \eta^k = \tau \}.
\end{align}
Then by \eqref{pfxz}, for all $k\ge 1$, $B_{k-1} \subset \{X_{\eta^k} \in \cE \cup \cG_2\} \cup \cN$ for a nullset $\cN$, which implies $B_{k-1} \cap \{ \eta^k = \tau\} \subset \{ X_\tau \in \cG_2\} \cup \cN$, so that together with \eqref{eq:lsubset}, we get $\{L< \infty\} \subset \{X_\tau \in \cG_2\} \cup \cN$.

\vspace{0.3cm}

According to properties (i),(ii) we get according to the strong Markov property,
\begin{align*}
&\mbox{ for all } y \in \cE_N, && \P_y(\eta^2<\tau)
\le \P_y(\eta^1<\tau) 
\le \P_y(X_{\eta^1} \notin \cG_2)
\le 1-p,\\
&\mbox{ for all } y \in \cE \setminus \cE_N, && \P_y(\eta^2=\tau)
= \P_y(\eta^2=\tau, \, B_1)
= \E_y[\indiq_{B_1}\PP_{X_{\eta^1}}( \eta^1=\tau)] 
\ge p \E_y[\indiq_{B_1}]
\ge pq,
\end{align*}
so that for all $y\in \cE$, $\P_y(\eta^2<\tau) \le 1-pq$. Observing that $\{ \eta^{k}<\tau \} = \{ X_{\eta^k} \in \cE \}$, we get using again the strong Markov property that \comm{[Below, first `=': $\{L \ge k+3\} = \{L \ge k+1\} \cap \{ \eta^{k+2} < \tau \}$. Then, use $\indiq$, condition on $X_k$, and use that $\{L \ge k+1\} \in \cF_k$ to pull it out.]}
\begin{align*}
\PP_x(L \ge k+3) &= \E_x[\indiq_{L\ge k+1}\P_{X_{\eta^k}}(\eta^2<\tau)] \le (1-pq)\PP_x(L \ge k+1).
\end{align*}
Since $pq \in (0,1)$, this implies $\PP_x(L<\infty) = 1$.
\end{proof}
The remainder of this section is dedicated to complete the construction of $(\eta^k)_{k\ge 0}$ in the proof above, i.e.\ to prove Propositions \ref{p:no:PPcol}, \ref{p:2coll} and \ref{p:it}.

\subsection{No $K$-collisions with $\delta_K \geq 2$}
\label{s:no:Kcoll}

\begin{prop}[No $K$-collisions with $\delta_K \geq 2$] \label{p:no:PPcol}
Let $\gamma > 0$, $N\ge 2$, $b\in \{-1,1\}^N$, $x\in \cE$, and $(X_t)_{t \ge 0}$ be an $\SC(b,x)$-process. Then, for any $K \subset \bbrack{1,N}$ with $|K| \geq 2$ and $\delta_K \geq 2$ we have
\begin{equation*}
  \PP_x(A_K) = 0, \qquad A_K := \{ \tau < \infty, \text{ there is a $K$-collision in } X_\tau \}.
\end{equation*}
\end{prop}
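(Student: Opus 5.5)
Fix $K$ with $|K|\ge 2$ and $\delta_K\ge 2$. If $K=\bbrack{1,N}$, Lemma \ref{l:R:sBP} gives that $R(X_t)$ coincides on $[0,\tau)$ with an $\SB(\delta,R(x))$-process $Z$ with $R(x)>0$. By Theorem \ref{t:BP}\ref{t:BP:geq2}, $Z$ a.s.\ never hits $0$. On $A_K$, continuity at $\tau$ (Proposition \ref{p:cty:tau}, Remark \ref{r:SC}) gives $R(X_t)\to R(X_\tau)=0$ as $t\nearrow\tau<\infty$. Since $Z$ is continuous, this forces $Z_\tau=0$, which is a contradiction. So we may assume $2\le |K|\le N-1$ and localize with the Girsanov tools of Section \ref{s:SC:prop}.

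\emph{Localization.} By countable additivity it suffices to show that $\P_x(A_K\cap\{\tau\le T\}\cap E_{\e,u})=0$ for rational $T,\e,u>0$, where
\[
E_{\e,u}:=\Big\{ d^K(X_t)\ge 2\e \ \text{for all } t\in[\tau-u,\tau) \Big\}.
\]
Indeed, on $A_K$ we have $d^K(X_\tau)>0$, and $X$ is continuous up to $\tau$ by Proposition \ref{p:cty:tau}. Hence $d^K(X_t)$ stays bounded below on a left neighbourhood of $\tau$, and every outcome of $A_K\cap\{\tau\le T\}$ lies in some $E_{\e,u}$.

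\emph{Excursions.} Define stopping times recursively by $\tilde\sigma^{-1}=0$ and
\[
\sigma^k:=\tau\wedge T\wedge\inf\{t\ge\tilde\sigma^{k-1}: d^K(X_t)\ge 2\e\},\qquad
\tilde\sigma^k:=\tau\wedge T\wedge\inf\{t\ge\sigma^k: d^K(X_t)\le \e\}.
\]
On $[\sigma^k,\tilde\sigma^k)$ we have $d^K\ge\e$. By continuity, $d^K$ needs time to go from $2\e$ to $\e$ on $[0,\tau)$, so there are only finitely many $k$ with $\sigma^k<\tau\wedge T$ before $\tau-u$. On $A_K\cap\{\tau\le T\}\cap E_{\e,u}$ there is therefore a (random) last index $k$ with $\sigma^k<\tau$ and $\tilde\sigma^k=\tau$. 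Along this excursion, $R^K(X_{\sigma^k+s})\to 0$ as $s\nearrow\tau-\sigma^k$, while $R^K(X_{\sigma^k})>0$ because $X_{\sigma^k}\in\cE$.

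\emph{Contradiction via Remark \ref{r:Girs:K}.} Apply Remark \ref{r:Girs:K} with $\bK=\{K,K^c\}$, this $\e$ and $T$. This gives an equivalent probability $\Q$ and $\SB(\delta_K)$-processes $Z^k$ with $R^K(X_{s+\sigma^k})=Z^k_s$ for $s\in[0,\tilde\sigma^k-\sigma^k)$. For each $k$, the process $Z^k$ starts at $R^K(X_{\sigma^k})>0$ on $\{\sigma^k<\tau\}$. Since $\delta_K\ge 2$, Theorem \ref{t:BP}\ref{t:BP:geq2} says $Z^k$ never hits $0$ $\Q$-a.s. The starting point is random, but we can condition on $\cF_{\sigma^k}$, as the construction builds $Z^k$ from the value at $\sigma^k$ and an independent Brownian motion.

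On the excursion ending at $\tau$, the continuous process $Z^k$ would satisfy $Z^k_{\tau-\sigma^k}=\lim R^K=0$. Thus the event above is contained in a countable union of $\Q$-null sets, so it is $\Q$-null and hence $\P$-null by equivalence. Taking the union over rational $T,\e,u$ yields $\P_x(A_K)=0$.

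The main obstacle is the bookkeeping of the localization. One must check that the exit $\tilde\sigma^k=\tau$ together with $R^K\to0$ really transfers to $Z^k$ hitting $0$. This uses the continuity from Proposition \ref{p:cty:tau} and that $Z^k$ agrees with $R^K$ up to but excluding $\tilde\sigma^k-\sigma^k$. One must also handle the random initial values of $Z^k$ when invoking Theorem \ref{t:BP}\ref{t:BP:geq2}.
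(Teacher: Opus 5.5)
Your proof is correct. For $2\le|K|\le N-1$, however, it takes a heavier route than the paper.

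\textbf{What the paper does.} It notes that no localization near $\tau$ is needed. Since $X_t\in\cE$ for all $t<\tau$, one has $d^K(X_t)>0$ on all of $[0,\tau)$. On $A_K$ one also has $d^K(X_\tau)>0$. Continuity of $X$ on the compact interval $[0,\tau]$ then gives $\inf_{t\in[0,\tau]}d^K(X_t)>0$. Hence $A_K$ is covered by the events $\{\tau\le T,\ R^K(X_\tau)=0,\ \inf_{[0,\tau]}d^K(X_t)\ge\e\}$ with $T\in\N$ and $\e$ rational.

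A single application of Corollary~\ref{c:Girs:bK} from time $0$ then suffices. Take $\bK=\{K,K^c\}$ and $\sigma=T\wedge\tau\wedge\inf\{t:d^K(X_t)<\e\}$. This gives a $\Q$-$\SB(\delta_K,R^K(x))$-process with the deterministic starting value $R^K(x)>0$. On the above event $\sigma=\tau$, which forces this process to hit $0$.

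\textbf{Your route.} You use an excursion decomposition with Remark~\ref{r:Girs:K}, modelled on the proof of Proposition~\ref{p:R:alte}. It reaches the same contradiction, and your bookkeeping is sound:
\begin{itemize}
\item there are finitely many upcrossings from $\e$ to $2\e$ on $[0,\tau-u]$;
\item there is a last excursion ending exactly at $\tau$;
\item conditioning on $\cF_{\sigma^k}$ handles the random positive starting value of $Z^k$.
\end{itemize}

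\textbf{What it buys.} Nothing here. Choosing $u\ge T$ in your $E_{\e,u}$ already covers $A_K\cap\{\tau\le T\}$ as $\e$ ranges over the rationals. In that case $d^K(x)\ge 2\e$, so $\sigma^0=0$ and $\tilde\sigma^0=\tau$. Your construction then collapses to the single excursion $[0,\tau)$ with deterministic initial data, which is exactly the paper's argument. The paper's version thus avoids both the upcrossing count and the random-initial-condition issue.
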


\begin{proof}
When $|K| = N$, Lemma \ref{l:R:sBP} guarantees that $(R(X_t))_{t \ge 0}$ is on $[0,\tau)$ a $\SB(\delta_K)$-process, which by Theorem \ref{t:BP}\ref{t:BP:geq2} implies $\PP(A_K) = 0$.

For $|K| \leq N-1$, we reason by contradiction. Suppose there exists a $K \subset \bbrack{1,N}$ with $2 \leq |K| \leq N-1$, $\delta_K \geq 2$ and $\PP(A_K) > 0$. Since there are no collisions prior to $\tau$, we have $d^K(X_t) > 0$ for all $t \in [0,\tau)$. On $A_K$, we also have $d^K(X_\tau) > 0$ (recall Definition \ref{d:Kcol}). Then, by the continuity of $(X_t)_{t \ge 0}$, we have
\begin{gather*} 
A_K
= \{ \tau < \infty, \ R^K(X_\tau) = 0, \ \inf_{t\in [0,\tau]} d^K(X_t) > 0 \} 
\subset \bigcup_{T\in \N}\bigcup_{\e \in \Q_{>0}} B_{T,\e}, \\
B_{T,\e} := \Big\{ \tau \leq T, \ R^K(X_\tau) = 0, \ \inf_{t\in [0,\tau]}d^K(X_t) \geq \e \Big\}.
\end{gather*} 
Thus, there exist deterministic constants $T, \e > 0$ such that $\P(B_{T,\e})>0$. With $T,\e$ as above and with $\bK = \{K, K^c\}$, we apply Corollary \ref{c:Girs:bK}. This provides a probability $\Q$ and a $\Q$-$\SB(\delta_K, R^K(x))$-process $(Z_t)_{t\ge 0}$ such that 
\[
  R^K(X_t) = Z_t
  \qquad \text{for all } t \leq \sigma := T \wedge \tau \wedge \inf \{ t \ge 0 : d^K(X_t) < \e \}.
\]
Since $\delta^K \geq 2$, $(Z_t)_{t\ge 0}$ does not hit $0$ a.s. Since $(X_t)_{t\ge 0}$ is continuous, we get $R^K(X_\sigma) = Z_{ \sigma} > 0$. However, on $B_{T,\e}$, we have $\sigma = \tau$ and $R^K(X_\tau) = 0$, which is absurd.
\end{proof}

\begin{cor}[No same-sign collisions] \label{c:no:PPcol}
Proposition \ref{p:no:PPcol} also holds if the condition $\delta_K \geq 2$ is replaced by $|\sum_{i \in K} b^i| = |K|$.
\end{cor}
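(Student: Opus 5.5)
The plan is to reduce Corollary \ref{c:no:PPcol} directly to Proposition \ref{p:no:PPcol}: we only need to check that $|\sum_{i \in K} b^i| = |K|$ with $|K| \geq 2$ forces $\delta_K \geq 2$. Once this is shown, the hypothesis of Proposition \ref{p:no:PPcol} holds for that $K$, and its conclusion $\PP_x(A_K) = 0$ is exactly the claim.

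The check is a one-line computation with \eqref{deltaK}. If $|\sum_{i\in K} b^i| = |K|$, then $(\sum_{i \in K} b^i)^2 - |K| = |K|^2 - |K| = |K|(|K|-1)$. Hence
\[
  \delta_K = \gamma |K|(|K|-1) + 2(|K|-1) = (|K|-1)(\gamma |K| + 2).
\]
Since $|K| \geq 2$ and $\gamma > 0$, this gives $\delta_K \geq \gamma|K| + 2 > 2$. So Proposition \ref{p:no:PPcol} applies verbatim, and this covers both the case $|K| = N$ and the case $2 \le |K| \le N-1$.

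No step is a real obstacle. The only point to keep in mind is that Proposition \ref{p:no:PPcol} was stated for every $K$ with $|K|\ge 2$ and $\delta_K\ge2$, without further restrictions, so no extra care is needed. For its use in the proof of Theorem \ref{t:3plus} in the single-sign case $|\sum_{i=1}^N b^i| = N$, every $K$ with $|K|\ge 2$ satisfies $|\sum_{i\in K} b^i| = |K|$. Since by Proposition \ref{p:cty:tau} $X_{\tau-} \in \partial\cE$ on $\{\tau<\infty\}$, some $K$-collision with $|K|\ge2$ must occur there. A finite union bound over such $K$ then yields $\P_x(\tau<\infty)=0$.
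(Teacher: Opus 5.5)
Your proposal is correct and follows the paper's own argument: both compute $\delta_K = \gamma|K|(|K|-1) + 2(|K|-1) > 2$ from \eqref{deltaK}, using $\gamma>0$ and $|K|\ge 2$, and then invoke Proposition~\ref{p:no:PPcol}. Your closing remark on the single-sign case likewise matches how the paper applies the corollary to each $K$ with $|K|\ge 2$ in the proof of Theorem~\ref{t:3plus}.
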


\begin{proof}
From \eqref{deltaK}, $\gamma > 0$ and $|K| \geq 2$ we obtain 
$
  \delta_K
  > 2(|K|-1)
  \geq 2.
$
We thus conclude using Proposition \ref{p:no:PPcol}.
\end{proof}

\subsection{Uniformly positive probability for simple collisions}

This section is devoted to the following proposition. It  is an improved, quantified version of \cite[Theorem 3.1]{VanMeursPeletierSlangen25}, whose proof construction goes back to at least \cite[Proposition 4]{FournierJourdain17}. In the present version we quantify the lower bound on the probability of a particle collision. 

\begin{prop}\label{p:2coll}   
Let $\gamma > 0$, $N \geq 3$, $b \in \{-1,1\}^N$ be such that $|\sum_{i=1}^N b^i| < N$, and $x \in \cE_N$. Let $( X_t )_{ t \ge 0}$ be an $\SC(b,x)$-process with lifetime $\tau$.  Then, there exists a deterministic constant $p = p(N,\gamma) > 0$ and a stopping time $\eta \leq \tau$ such that $\eta < \infty$ and 
  \begin{equation*}
    \P_x(X_\eta \in \cE \cup \cG_2) = 1, \quad
    \P_x(\eta = \tau, \ X_\eta \in \cG_2) \geq p.
  \end{equation*}
\end{prop}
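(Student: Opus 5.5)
By the scale invariance of Proposition \ref{p:invar}\ref{p:invar:scale} we may assume $\Dopp(x) = 1$. Choose a pair $i_0, j_0$ with $b^{i_0} = -b^{j_0}$ and $|x^{i_0}-x^{j_0}| = 1$, and set $K := \{i_0,j_0\}$. Because $x \in \cE_N$, every same-sign pair is at distance at least $c_N$, and every opposite-sign pair is at distance at least $1$. In particular $d^K(x) \ge c_N$. The constant $c_N$ is too small to give the uniform separation we need directly, so we start with a pigeonhole argument on scales: this is what the specific form $7^{-N}N^{-3N/2}$ is for. Among the $N$ nested annuli $\{y : 7^{-m-1} r < |y - M^K(x)| \le 7^{-m} r\}$ (with $r$ of order $1$ and scale factor $7$ per step) some annulus contains no particle. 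This yields a cluster $K' \supseteq K$ whose particles lie in a ball of radius $\rho$, with all other particles at distance at least $6\rho$ from it, and with $\rho \ge c' c_N$ for some $c'$.

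Inside $K'$ the only opposite-sign pair at distance $1$ that we follow is $K$ itself. All other pairs in $K'$ are at distance at least $c_N$, which is a deterministic fraction of the scale. We then define the stopping time
\[
  \eta := \tau \wedge T \wedge \inf\{ t \ge 0 : R^K(X_t) \ge 4R^K(x) \text{ or } \min_{i\in K, j \notin K}|X^i_t - X^j_t| \le \tfrac12 \e \text{ or } \Dsame(X_t) \le \tfrac12 c_N \},
\]
for deterministic $T, \e$ depending on $N$ and $\gamma$. Up to $\eta$, $X$ stays separated except possibly within $K$, so $X_\eta \in \cE \cup \cG_2$ almost surely. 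The only possible collision before $\eta$ is a $K$-collision, which is simple. The set $\{\eta = \tau\}$ is therefore contained in $\{X_\tau \in \cG_2\}$, and $\eta < \infty$ by the cutoff $T$.

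For the lower bound, apply Corollary \ref{c:Girs:bK} with the trivial-except-$K$ partition $\bK = \{K\} \cup \{\{j\} : j \notin K\}$, the separation $\e$ and horizon $T$. This gives a probability $\Q$ with $\E_\P[(\dd\Q/\dd\P)^2] \le C(\gamma,\e,T,N)$, under which the following three processes are independent:
\begin{itemize}
  \item $R^K(X_t)$, which is a $\SB(\delta_K)$-process with $\delta_K = -2\gamma + 2 < 2$;
  \item $M^K(X_t)$, which is a scaled Brownian motion;
  \item each $X^j_t$ for $j\notin K$, which is a Brownian motion.
\end{itemize}
Under $\Q$, the event $E$ is defined by three requirements: $R^K$ hits $0$ before time $T$ while staying below $4R^K(x)$ (Theorem \ref{t:BP}\ref{t:BP:less2}); $M^K$ stays within $\e'$ of its start on $[0,T]$; and each $X^j$ stays within $\e'$ of its start. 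With $T = T(N)$ and $\e'$ a small fraction of $c_N$, each factor has probability bounded below by a constant depending only on $N$ and $\gamma$, so $\Q(E) \ge q_0 > 0$. On $E$ no exit condition of $\eta$ other than $\tau$ triggers, so $E \subset \{\eta = \tau, X_\tau \in \cG_2\}$ (on the time interval where the corollary's identification holds, which we make self-consistent by a continuity argument). Finally, by Cauchy--Schwarz,
\[
  q_0 \le \Q(E) \le \E_\P[(\dd\Q/\dd\P)^2]^{1/2}\, \P(E)^{1/2},
\]
so $\P(E) \ge q_0^2/C =: p$.

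The main obstacle is the first step: guaranteeing that, uniformly over $\cE_N$, every particle outside $K$ stays at a deterministic distance from $K$ while $K$ collapses. A nearby same-sign particle at distance of order $c_N$ could approach faster than $R^K$ hits zero. To handle this I would make the hitting time small relative to $c_N^2$. Using the scaling in Theorem \ref{t:BP}\ref{t:BP:scal}, I would choose $T$ of order $c_N^2$ and use that $\SB(\delta_K, 1)$ hits $0$ before time $T$ with positive probability depending on $T$, hence only on $N$ and $\gamma$. If the ratio between $T$ and $c_N$ turns out to be inadequate, I would first run a preliminary phase: with positive probability, use Brownian motion under Girsanov to move $K$'s partners into a configuration where $K$ is well isolated, and only then collapse $K$.
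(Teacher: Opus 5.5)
\textbf{There is a genuine gap: the inclusion $E \subset \{\eta = \tau\}$ fails.} Corollary~\ref{c:Girs:bK} controls only the midpoint $M^K(X_t)$ and the length $|X^{i_0}_t - X^{j_0}_t| = \sqrt{2R^K(X_t)}$ of the pair. It says nothing about the pair's orientation. On $E$ you therefore only know that both particles of $K$ lie on the circle of radius $\frac12 |X^{i_0}_t - X^{j_0}_t|$ about $M^K(X_t)$. With your threshold $R^K < 4R^K(x) = 2$, that radius may grow up to $1$.

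A configuration in $\cE_N$ may contain a particle $j \notin K$ with $b^j = b^{j_0}$, $|x^j - x^{j_0}| = c_N$ and $|x^j - x^{i_0}| = 1$. Such a particle sits at distance only $\frac12\sqrt{1+2c_N^2} \approx \frac12 + \frac{c_N^2}{2}$ from $M^K(x)$. On $E$ the pair can stretch slightly and rotate into $x^j$, so the exit condition $d^K \le \e/2$ in your $\eta$ fires before $\tau$.

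Shrinking $T$ to order $c_N^2$ does not cure this, because the obstruction is spatial, not temporal. The relevant margin is of order $c_N^2$, not $c_N$, so your choices of $\e$ and $\e'$ as fractions of $c_N$ are also at the wrong scale. The ``preliminary isolation phase'' is not carried out, and it is not needed.

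\textbf{The missing idea is a parallelogram identity combined with a tight bound on $R^K$.} For every $j \notin K$,
\[
|x^j - M^K(x)|^2 = \tfrac12 |x^j - x^{i_0}|^2 + \tfrac12 |x^j - x^{j_0}|^2 - \tfrac14 \ge \tfrac14 + \tfrac{c^2}{2},
\]
because $x^j$ is at distance $\ge 1$ from the member of $K$ of opposite sign and $\ge c$ from the one of the same sign. Hence every other particle lies outside the disk of radius $\frac12 + \frac{c^2}{3}$ about $M^K(x)$. Now require three things:
\begin{itemize}
\item $|X^{i_0}_t - X^{j_0}_t| < 1 + c^2/6$, i.e.\ $R^K$ stays below $\frac12(1+c^2/6)^2$, barely above its initial value $\frac12$;
\item $|M^K(X_t) - M^K(x)| < c^2/12$;
\item $|X^j_t - x^j| < c^2/12$ for all $j \notin K$.
\end{itemize}
The triangle inequality then gives $d^\bK(X_t) \ge c^2/12$ whatever the orientation of the pair. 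Theorem~\ref{t:BP}\ref{t:BP:less2} still gives positive probability for $R^K$ to hit $0$ while staying below any level $\alpha > r$, so this tighter event is admissible.

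Take $\eta$ to be the exit time from this neighbourhood, capped at $1 \wedge \tau$. This also repairs a second, smaller gap. Your $\eta$ does not monitor opposite-sign pairs inside $K^c$, so simultaneous collisions there are not excluded, and the claim $X_\eta \in \cE \cup \cG_2$ a.s.\ is not yet justified.

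Finally, the pigeonhole step on annuli is never used afterwards and is unnecessary. The value of $c_N$ plays no role in this proposition (Remark~\ref{r:2coll}\ref{r:2coll:cN}); its specific form comes from Proposition~\ref{p:it}.
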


\begin{rem} \label{r:2coll} 
\begin{enumerate}[label=(\roman*)]
  \item \label{r:2coll:cN} The particular value of $c_N$ (in $\cE_N$) is irrelevant; we state the proof for any constant $c > 0$. In this generalized setting, $p$ depends also on $c$.
  \item \label{r:2coll:N2} In the case $N=2$ a stronger statement holds; see Theorem \ref{t:3plus} and its proof.
  \item \label{r:2coll:eq-event} In the statement of Proposition \ref{p:2coll}, since $X_\eta \in \cE \cup \cG_2$, the events $\{\eta = \tau\}$ and $\{X_\eta \in \cG_2\}$ are equivalent. 
\end{enumerate}
\end{rem}

The proof strategy is to construct a neighborhood $\cH \subset \cE \cup \cG_2$ of $x \in \cE_N$ of `good' configurations in which a selected pair of particles with opposite sign have room to collide while all other particle pairs remain separated. Figure \ref{fig:x1x2} illustrates the setting upon which our construction of $\cH$ is based. We then show that, with positive probability, $\tau$ is smaller than the exit time of $\cH$.

\begin{figure}[h]
\centering
\begin{tikzpicture}[scale=1.8,rotate = 90]
  \def \r {.1}   
   
  \begin{scope}[shift={(0,1)},scale=1] 
    \draw[fill = blue, fill opacity=0.1] (0,0) circle (2); 
  \end{scope}
  
  \begin{scope}[shift={(0,-1)},scale=1] 
    \draw[fill = red, fill opacity=0.1] (0,0) circle (2); 
  \end{scope}
  
  \begin{scope}[shift={(0,1)},scale=1] 
    \draw[fill = white] (0,0) circle (.8); 
    \fill[blue, opacity=0.1] (0,0) circle (.8);
    \fill[red, opacity=0.1] (0,0) circle (.8);
  \end{scope}
  \begin{scope}[shift={(0,-1)},scale=1] 
    \draw[fill = white] (0,0) circle (.8); 
    \fill[blue, opacity=0.1] (0,0) circle (.8);
    \fill[red, opacity=0.1] (0,0) circle (.8);
  \end{scope} 
  
  \begin{scope}[shift={(0,1)},scale=1] 
    \draw (180:2) arc (180:360:2); 
  \end{scope}
  
  \begin{scope}[shift={(0,-1)},scale=1] 
    \draw (0:2) arc (0:180:2); 
  \end{scope}
  
  \draw (0,1-\r) node[right] {$x^1$};
  \begin{scope}[shift={(0,1)}] 
      \filldraw[draw=red, fill=white] (0, 0) circle (\r); 
      \draw[red] (-.7*\r, 0) -- (.7*\r, 0);
      \draw[red] (0, -.7*\r) -- (0, .7*\r);
  \end{scope}
  \draw[fill = black] (0,0) circle (.03);
  \draw[->] (1,0) node[above] {$M^K(x)$} -- (.2,0);
  \draw (0,-1+\r) node[left] {$x^2$};
  \begin{scope}[shift={(0,-1)}] 
      \filldraw[draw=blue, fill=white] (0, 0) circle (\r); 
      \draw[blue] (0, -.7*\r) -- (0, .7*\r);
  \end{scope}
  
  \begin{scope}[shift={(0,-1)},rotate=270] 
    \draw[thin] (\r,0) -- (2,0); 
    \draw (1.2,0) node[above] {$\Dopp(x)$} node[below] {$1$};
  \end{scope} 
  
  
  \begin{scope}[shift={(0,1)},rotate=90] 
    \draw[thin] (\r,0) -- (.8,0) node[midway,above] {\scriptsize $\Dsame(x)$} node[midway,below] {$\geq c$}; 
  \end{scope} 
  
\end{tikzpicture}
\caption{By the definitions of $\Dsame(x)$ and $\Dopp(x)$, positive particles other than $x^1$ are outside of the red regions and negative particles other than $x^2$ are outside of blue ones. Purple regions contain no particles other than $x^1$ and $x^2$.}
\label{fig:x1x2}
\end{figure} 

\begin{proof}[Proof of Proposition \ref{p:2coll}]  
We start with preparations. We write $c$ instead of $c_N$ and only assume that $c > 0$. Since the statement of Proposition \ref{p:2coll} is stronger for smaller $c$, we may put the upper bound $c < \frac12$ without loss of generality. We apply Proposition \ref{p:invar}\ref{p:invar:label} to relabel such that $b^1 = 1$, $b^2 = -1$ and $|x^1 - x^2| = \Dopp(x)$. We take $K = \{1,2\}$. By rescaling $Y_s := \frac1a X_{a^2 s}$ (recall Proposition \ref{p:invar}\ref{p:invar:scale}) with $a := \Dopp(x)$ we may assume that $x$ is such that $\Dopp(x) = 1$. Indeed, since the events in the statement of Proposition \ref{p:2coll} are invariant with respect to the rescaling, $p$ does not depend on $a$.
Since $x \in \cE_N$, $\Dopp(x) = 1$ implies $\Dsame(x) \geq c$. Denoting the rescaled process $(Y_s)_{s\ge 0}$ again by $(X_t)_{t\ge 0}$ (and its collision time by $\tau$), the statement in Proposition \ref{p:2coll} remains unaltered. 
\smallskip

\textbf{Step 1:} Construction and properties of $\cH \subset \cE \cup \cG_2$. Recall the desired description of $\cH$ from the proof strategy. 
The following choice works out convenient for the arguments that follow:
\begin{align*}
  \cH &:= \bigcap_{k=1}^3 \cH_k, \\
  \cH_1 &:= \Big\{ y \in (\R^2)^N : |y^1 - y^2| < 1 + \frac{c^2}6 \Big\},   \\
  \cH_2 &:= \Big\{ y \in (\R^2)^N : |M^K(y) - M^K(x)| < \frac{c^2}{12} \Big\}, \\
  \cH_3 &:= \Big\{ y \in (\R^2)^N : \max_{3 \leq j \leq N} |y^j - x^j| < \frac{c^2}{12} \Big\}.
\end{align*} 
Note that $\cH$ is open. Let $\eta$ be the exit time of $(X_t)_{t \ge 0}$ from $\cH$, capped at $1 \wedge \tau$. When $t\in [0,\eta]$, $\cH_1$ ensures that $|X_t^1 - X_t^2|$ hardly exceeds $|x^1 - x^2| = 1$, $\cH_2$ guarantees that the midpoint $M^K(X_t) = \frac12(X_t^1 + X_t^2)$ remains close to $M^K(x)$, and $\cH_3$ ensures that the other particles stay close to their initial positions.
The precise choices for the bounds in $\cH_k$ are made to quantify the supposed particle separation; see \eqref{pfzg} below. Note that within $\cH$ only one particular kind of collision can happen; a $K$-collision.

\vspace{0.3cm}

Clearly, for all $t\in [0,\eta]$, it follows from the choice of $\cH_3$ and the triangular inequality that 
\begin{equation} \label{pfym}
  \min_{\substack{ i,j \in K^c  \\ i \neq j}} |X_t^i - X_t^j| 
  \geq \min_{\substack{ i,j \in K^c  \\ i \neq j}} \big( |x^i - x^j| - |X_t^i - x^i| - |x^j - X_t^j| \big)
  \geq \Big( c - \frac{c^2}6 \Big) 
  \geq \frac56 c.
\end{equation}
In addition, we claim that
\begin{equation} \label{pfzg}
  \min_{t \in [0,\eta]} d^\bK(X_t) \geq \frac{c^2}{12},
\end{equation}
where $\bK = \{K\} \cup \{\{i\}, i\in K^c\}$ (recall \eqref{dbK}). Given this claim (proved below), we obtain from \eqref{pfym} that
\begin{equation*} 
  \min \big\{ | X_t^i - X_t^j | : t \in [0,\eta], \ i \neq j, \ \{i,j\} \neq K \big\} \geq \frac{c^2}{12}.
\end{equation*}
This proves $P(X_\eta \in \cE \cup \cG_2) = 1$, which is the first part of Proposition \ref{p:2coll}.

Next we prove the claim in \eqref{pfzg}. Thanks to \eqref{pfym} it is left to show that $|X_t^i - X_t^j| \geq \frac1{12} c^2$ for all $t \in [0,\eta]$, each $i \in K$ and each $j \in K^c$.
Let
\begin{equation*}
  D_t^K
  := \min_{3 \leq j \leq N} \big| X_t^j - M^K(X_t) \big|.
\end{equation*}
First, we show that
\begin{equation} \label{pfxe} 
  D_0^K  
  \geq \frac12 \sqrt{1 + 2c^2}.
\end{equation}
To prove \eqref{pfxe}, we choose the axes such that $x^1 = (0,0)$ and $x^2 = (1,0)$. Then, if $j\ge 3$, (we can suppose by symmetry that $b^j = -b^1= b^2$), we have that $|x^j|\ge \Dopp(x) = 1$ and $|x^j-(1,0)|\ge \Dsame(x) = c$. Writing $x^j = (x_1^j,x_2^j)$ we compute
\begin{align*}
    |x^j - M^K(x)|^2 
    &= \Big|x^j - \Big(\frac12,0\Big)\Big|^2 
    = (x_1^j)^2 - x_1^j + \frac14 + (x_2^j)^2 \\
    &= \frac{|x^j|^2}{2} + \frac{|x^j-(1,0)|^2}{2} - \frac{1}{4}
    \ge \frac14 +\frac{c^2}{2}.
\end{align*}
This completes the proof of \eqref{pfxe}.

\vspace{0.3cm}

Using $\sqrt{1+x} \geq 1 + \frac x3$ for $0 \leq x \leq 3$, we get
$
  D_0^K  
  \geq \frac12 + \frac13 c^2
$.

Let $t \in [0, \eta]$ and $j \in \bbrack{3,N}$. Since $X_t \in \cH_2 \cap \cH_3$, we have 
\begin{equation*} 
  |X_t^j - M^K(X_t)|
  \geq |x^j - M^K(x)| - |X_t^j - x^j| - |M^K(X_t) - M^K(x)|
  \geq D_0^K - \frac{c^2}{12} - \frac{c^2}{12} \geq \frac12 + \frac{c^2}6.
\end{equation*}
Using that $X_t \in \cH_1$, we further obtain
\begin{align*} 
  |X_t^j - X_t^1|
  &\geq |X_t^j - M^K(X_t)| - \frac12 |X_t^1 - X_t^2|  
  \geq \frac12 + \frac{c^2}6 - \frac12 \Big(1 + \frac{c^2}6 \Big) = \frac{c^2}{12}.
\end{align*}
A similar estimate holds for $X_t^2$ by symmetry. This proves the claim in \eqref{pfzg}.
\smallskip

\textbf{Step 2:} According to Corollary \ref{c:Girs:bK} applied with $\bK$ defined in Step 1, $\e = \frac1{12} c^2$ and $T=1$, there exist a probability $\Q$ equivalent to $\P$,  a constant $C = C(N,\gamma,c) > 0$ such that $C^{-1} \le \E_{\P}[(\dd \Q/\dd \P)^2] \le C$, and $\Q$-independent processes $(S_t^{K})_{t \ge 0}$, $((S^j_t)_{t\ge 0})_{j\in \ig 3,N\id}$ ($\Q$-Brownian motions) and $(Z_t^{K})_{t \ge 0}$ (a $\Q$-$\SB(\delta_{K}, R^{K}(x))$-process) (see Corollary \ref{c:Girs:bK} for the precise properties) such that 
\begin{equation} \label{pfxi} 
    M^{K}(X_t) = S_t^{K}, \quad 
    R^{K}(X_t) = Z_t^{K}
    \quad \mbox{ and }\quad X^j_t = S^j_t \quad \text{for all $j\in \ig 3,N\id$ and all } t \in [0, \sigma],
\end{equation}
where $\sigma := 1 \wedge \tau \wedge \inf \{ t \ge 0 : d^\bK(X_t) < \frac1{12} c^2 \}$. By \eqref{pfzg} we have $\sigma \geq \eta$.

\vspace{0.3cm}

Recall from Remark \ref{r:2coll}\ref{r:2coll:eq-event} that it is left to show that $\P(\eta = \tau) \geq p$. Since $C^{-1}\le \E_\P[(\dd \Q/\dd \P)^2]$ $\le C$, we get by the Cauchy-Schwartz inequality  \comm{[p.70.7 for details]} 
\begin{equation} \label{pfxl}
  \Q(\eta = \tau)^2 \leq \P(\eta = \tau) \E_{\P}[(\dd \Q/\dd \P)^2] \leq C \P(\eta = \tau),  
\end{equation}
and thus it is sufficient to show that $\Q(\eta = \tau) \geq q_0$ for some deterministic constant $q_0 = q_0(N,c,\gamma) > 0$. From the definition of $\eta$ we obtain that
\[
  \{\eta = \tau\}
  \supset \{ X_\eta \in \cH, \ \tau \leq 1 \}.
\]
Then, from \eqref{pfxi} we obtain that $\{ X_\eta \in \cH, \ \tau \leq 1 \} \supset \cap_{k=1}^3 A_k$, where
\begin{align*}
  A_1 &:= \Big\{ \min_{t\in [0,1]} Z_t^K = 0, \quad 
                     \max_{t\in [0,1]} Z_t^K < \frac12 \Big( 1 + \frac{c^2}6 \Big)^2 \Big\},\\
  A_2 &:= \Big\{ \sup_{t\in [0,1]}|S_t^K - M^K(x)| < \frac{c^2}{12} \Big\}, \\
  A_3 &:= \Big\{ \sup_{t\in [0,1]}\max_{3 \leq j \leq N} |S_t^j -  x^j| < \frac{c^2}{12} \Big\}.
\end{align*}
Moreover, by the $\Q$-independence of the processes in \eqref{pfxi}, we obtain that the events $\{A_k\}_{k=1}^3$ are independent, and thus
\[
  \Q(\eta = \tau)
  \geq \Q( \cap_{k=1}^3 A_k )
  = \prod_{k=1}^3 \Q(A_k).
\]
Clearly, there exist deterministic constants $q_2 = q_2(c)>0$ and $q_3 = q_3(N,c)>0$ such that $\Q(A_2) \ge q_2$ and $\Q(A_3) \ge q_3$. Finally, noting that $\delta^K = 2(1-\gamma) < 2$ (recall \eqref{deltaK}) and $R^K(x) = \frac12 |x^1 - x^2|^2 = \frac12$, Theorem \ref{t:BP}\ref{t:BP:less2} provides a deterministic constant $q_1 = q_1(c,\gamma) >0$ such that $\Q (A_1) \ge q_1$. This completes the proof of the inequality in Proposition \ref{p:2coll}.
\end{proof}

\subsection{Entering $\cE_N$ happens with uniformly positive probability}
\label{s:it}

This section is devoted to the following proposition:

\begin{prop} \label{p:it}
Let $\gamma > 0$, $N \geq 3$, $b \in \{-1,1\}^N$, $x \in \cE \setminus \cE_N$, and $( X_t )_{t \ge 0}$ be a $\SC(b,x)$-process with lifetime $\tau$ and associated filtered probability space $(\Omega,\cF,(\cF_t)_{t\ge 0},\P)$. Then, there exist $q = q(\gamma,N) > 0$ and a stopping time $\eta$ such that $\eta \in (0,\tau)$ a.s.\ and \comm{[ we put $(\Omega,\cF,(\cF_t)_{t\ge 0},\P)$ in the statement whenever convenient for the proof or its use]}
\begin{equation*}
  \P_x(X_\eta \in \cE_N) \geq q.
\end{equation*}
\end{prop}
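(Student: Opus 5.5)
The plan is to combine a multiscale clustering of the configuration $x$ with the scale invariance of Proposition \ref{p:invar}\ref{p:invar:scale}, and to argue by strong induction on $N$. The proposition for smaller particle numbers is applied to well-separated sub-clusters, which are decoupled by the Girsanov argument of Corollary \ref{c:Girs:bK}. The specific value $c_N=7^{-N}N^{-\frac32 N}$ suggests this structure: a factor of order $7N^{3/2}$ is lost at each of at most $N$ clustering levels.

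\textbf{Step 1 (multiscale decomposition).} Order the $\binom N2$ pairwise distances of $x$, or rather the $N-1$ merge levels $\lambda_1\le\dots\le\lambda_{N-1}$ of single-linkage clustering. Since $x\notin\cE_N$, we have $\Dsame(x)<c_N\Dopp(x)$, so the ratio between the level at which the closest same-sign pair merges and $\Dopp(x)$ is at most $c_N$. By pigeonhole over at most $N$ consecutive levels, there is a level $\lambda$ such that the partition $\bK$ into clusters at level $\lambda$ satisfies two properties:
\begin{itemize}
\item[(a)] each cluster has diameter at most $N\lambda$;
\item[(b)] distinct clusters are at distance at least $\Lambda\lambda$ with $\Lambda\ge 7N^{3/2}$.
\end{itemize}
Moreover, some cluster $K_0$ contains a same-sign pair. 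By Proposition \ref{p:invar}\ref{p:invar:label} and \ref{p:invar:scale}, I rescale so that $\lambda=1$. All probabilities below are then uniform in $x$, which is what makes $q$ depend only on $(\gamma,N)$.

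\textbf{Step 2 (decoupling and evolution of the clusters).} On $[0,1]$, apply Corollary \ref{c:Girs:bK} with this $\bK$ and $\e=\Lambda/2$. Under $\Q$, the clusters evolve as independent $\SC(b^K)$-processes, and their centres $M^K$ are independent Brownian motions up to the first time the separation drops below $\e$. With uniformly positive $\Q$-probability, the centres move by less than $1$ and each $R^K$ stays bounded by a constant times $N^2$; by (b), this keeps $d^\bK\ge\e$ up to time $1$. Inside each cluster, I then distinguish two cases.

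\emph{Single-sign clusters.} If $|K|\ge2$, Corollary \ref{c:no:PPcol} and $\delta_K>2$ give no collision. The $\SB(\delta_K)$-process $R^K$, together with the scaling of the sub-cluster, allows us to require that with positive probability all same-sign pairs inside $K$ are at distance of order $1$ at time $1$. I would obtain this from a lower bound on $\Dsame$ of the sub-process, via a scaled version of the present proposition applied to $K$.

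\emph{Mixed-sign clusters with $|K|<N$.} Use the induction hypothesis: Proposition \ref{p:it} for $|K|$ particles if $x^K\notin\cE_{|K|}$. Combined with Proposition \ref{p:2coll} for $|K|$ particles, this lets the sub-process either stay in, or with probability at least $q(\gamma,|K|)$ enter, $\cE_{|K|}$ at some stopping time before its lifetime. The stopping time $\eta$ is taken as the first such time, capped by the exit time from the good event. The ratio $c_{|K|}/c_N\ge 7N^{3/2}$, together with the separation $\Lambda$, then gives $\Dsame(X_\eta)\ge c_N\Dopp(X_\eta)$ globally. The reason is that the minimal opposite distance inside a mixed cluster is at most $N$, while same-sign pairs in different clusters are at distance at least $\Lambda/2$.

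\textbf{Step 3 (back to $\P$).} Transfer the $\Q$-probability lower bound back to $\P$ by the Cauchy--Schwarz step as in \eqref{pfxl}, using $\E_\P[(\dd\Q/\dd\P)^2]\le C(\gamma,\e,1,N)$. Also check that $\eta<\tau$ a.s. This holds because on the good event no cluster collides before $\eta$: single-sign clusters never collide, and for mixed clusters we stop at entry into $\cE_{|K|}\subset\cE$. Off the good event, I set $\eta$ to a fixed intermediate stopping time, e.g.\ half of the exit time of $\cE^{\e'}$ capped at $1$, which is a.s.\ in $(0,\tau)$.

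\textbf{Main obstacle.} The hardest case is $|K_0|=N$, i.e.\ no separation gap exists between the whole configuration and a proper subcluster. Then the induction hypothesis does not apply directly, since $|K|<N$ fails. I would handle this by exploiting that $\lambda\le c_N\Dopp$ forces a \emph{proper} subcluster at the lowest level to be separated, because a same-sign pair is much closer than every opposite pair. So the recursion always peels off a same-sign-dominated subcluster of size smaller than $N$. Controlling the constants so that the lost factors telescope into exactly $c_N$ is the delicate bookkeeping I expect to be the main difficulty.
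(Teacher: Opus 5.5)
\textbf{Gap 1: the single-sign clusters and the scale of expansion.} Your plan breaks down at the single-sign clusters, which is where the real difficulty lies. You need a same-sign cluster, possibly arbitrarily tight, to spread out with probability bounded below \emph{uniformly in its initial configuration}. You defer this to ``a scaled version of the present proposition applied to $K$'', but that is not available. For single-sign $b^K$ there is no $\Dopp$, so $\cE_{|K|}$ is undefined (or empty). Moreover, the proposition only controls $\Dsame$ relative to $\Dopp$, never on an absolute scale.

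Spreading to the merge scale $\lambda$ would not suffice anyway. Take $N=3$, with two positive particles at distance $\e\ll c_3$ and a negative one at distance $1$. Your only gap level is $\lambda=\e$, and both clusters are single-sign. After your time horizon (time $\e^2$ in original units), the pair is at distance $O(\e)$ with probability tending to one as $\e\to0$, so $X_\eta\notin\cE_3$. The only alternative is the single cluster of size $N$, which is your unresolved ``main obstacle''.

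The paper normalises instead by the cluster separation $d^\bK(x)$, which is initially $\Dopp(x)$ via the sign-separating partition $\bK(x,\Dopp(x))$. Its argument runs as follows.
\begin{enumerate}
\item For single-sign $K$ one has $\delta_K>2$, so by comparison $R^K$ dominates a non-stopped $SqB(2,0)$-process.
\item That process reaches level $\frac1{49}$ in unit time with a probability independent of $R^K(x)$ (Remark \ref{r:BP}).
\item Proposition \ref{p:cluster:split} then splits the cluster, giving a strictly finer partition (Lemma \ref{l:it:step}).
\item This is iterated at most $N-2$ times. The recursion $g_{k+1}\le\alpha^{-1}(g_k+\frac67)$ for $\Dopp/d^{\bK^k}$ produces $c_N$.
\end{enumerate}

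\textbf{Gap 2: the mixed-cluster step and the definition of $\eta$.} The induction on $N$ also fails as stated. The induction hypothesis gives each cluster its own stopping time at which it lies in $\cE_{|K|}$ with probability at least $q$, with no control at any other time. You need all clusters, including the single-sign ones, to be good at one common time, and taking the first such time leaves the others uncontrolled. Proposition \ref{p:2coll} does not keep a cluster inside $\cE_{|K|}$ either: it produces a collision with positive probability, which conflicts with $\eta<\tau$.

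Mixed clusters are in fact unnecessary. The smallest merge level is $\Dsame(x)<c_N\Dopp(x)$, and some merge level is at least $\Dopp(x)$, because any spanning tree has an edge joining opposite signs. Hence a gap of ratio at least $c_N^{-1/(N-2)}\ge 7N^{3/2}$ can always be found among levels below $\Dopp(x)$, where every cluster is single-sign. This is why the paper needs no induction on $N$.

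Finally, half of the exit time of $\cE^{\e'}$ is not a stopping time, so your fallback definition of $\eta$ must be replaced. In the paper, $\eta=\zeta^L$ lies in $(0,\tau)$ a.s.\ by construction. Corollary \ref{c:no:PPcol} gives $\eta<\tau$, and Lemma \ref{l:it:step}\ref{pp6} together with $x\notin\cE_N$ gives $\eta>0$.
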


The proof is involved. We first describe the complexity of Proposition \ref{p:it} and the proof strategy.

Proposition \ref{p:it} is intuitively obvious. Indeed, $X_t \in \cE \setminus \cE_N$ is equivalent to $0 < \Dsame(X_t) < c_N \Dopp(X_t)$, which means that there are at least two particles of the same sign very close together with respect to the nearest particle of opposite sign. This is an unlikely configuration to appear in the $\SC$-process $(X_t)_{t \ge 0}$, because both the Brownian motion and the particle interaction tend to separate particles of equal sign.

The main difficulty with making this rigorous is that we have no lower bound on $\Dsame(X_t)$ and thus no control on how close $X_t$ is to the singularity of the drift. Therefore, we will work with $(M^K(X_t))_{t \ge 0}$ and $(R^K(X_t))_{t \ge 0}$. We will start by dividing $x$ into clusters as $\{ x^K \}_{K \in \bK}$ (recall \eqref{xK}) such that each $x^K$ contains particles of the same sign that are very close together and separated from the other particles (see Figure \ref{fig:clusters}). Then, by Corollary \ref{c:Girs:bK}, $(M^K(X_t))_{t \ge 0}$ behaves like a Brownian motion, and for each cluster with two or more particles, $(R^K(X_t))_{t \ge 0}$ behaves like a $\SB(r^K,\delta_K)$ process with $r^K > 0$ small and $\delta_K > 2$. It is then easy to show that with positive probability, until some stopping time $\zeta < \tau$, $(M^K(X_t))_{t \ge 0}$ does not move much away from $M^K(x)$ and $(R^K(X_t))_{t \ge 0}$ reaches a value much larger than $r^K$. 

\begin{figure}[h]
\centering
\begin{tikzpicture}[scale=.2]
    \def \r {.8}
    \def \lgray {black!20!white}
    
    \begin{scope}[shift={(14,4)}]
    \foreach \x/\y in {0/0, 1.8/1.8, -3/0, -1.5/-2, 1.5/-2}{
    \begin{scope}[shift={(\x,\y)}]
        \filldraw[draw=red, fill=white] (0,0) circle (\r); 
        \draw[red] (-.7*\r,0) -- (.7*\r,0);
        \draw[red] (0,-.7*\r) -- (0,.7*\r);
    \end{scope}
    }
    \draw[dotted] (0,0) circle (4);
    \begin{scope}[rotate = 196]
      \draw[<->] (4,0) -- (11.4,0); 
    \end{scope}
    \begin{scope}[rotate = 329]
      \draw[<->] (4,0) -- (9.5,0);
    \end{scope}

    \draw (-2,-4) node[left] {$\geq d^\bK(x)$};
    \draw (5,-2) node[right] {$\geq d^\bK(x)$};
    \end{scope}

    \begin{scope}[shift={(24,-2)}]
    \foreach \x/\y in {0/-1, 0/1}{
      \begin{scope}[shift={(\x,\y)}] 
      \filldraw[draw=red, fill=white] (0, 0) circle (\r); 
      \draw[red] (-.7*\r, 0) -- (.7*\r, 0);
      \draw[red] (0, -.7*\r) -- (0, .7*\r);
      \end{scope}
    }
    \draw[dotted] (0,0) circle (2);
    \end{scope}
    
    \begin{scope}[shift={(0,0)}]
    \foreach \x/\y in {-2/0, .8/1.5, 1.8/-.2, .2/-1.8}{
      \begin{scope}[shift={(\x,\y)}] 
        \filldraw[draw=blue, fill=white] (0, 0) circle (\r); 
        \draw[blue] (-.7*\r, 0) -- (.7*\r, 0);
      \end{scope}
    }
    \draw[dotted] (0,0) circle (3);
    \end{scope}               
\end{tikzpicture}
\caption{Example of a configuration $x$ and a partition $\bK$ that separates the positive from the negative particles.}
\label{fig:clusters}
\end{figure}

The problem is that this does not give the information we seek, namely that $\Dsame(X_\zeta) \geq c_N \Dopp(X_\zeta)$. Indeed, if $|K| \geq 3$, then $R^K(X_\zeta) \gg r^K$ gives no lower bound on $\Dsame(X_\zeta^K)$; it only says that not \textit{all} particles in the cluster $X_\zeta^K$ are close. 

The proof strategy of Proposition \ref{p:it} is to solve this problem by an iterative argument. If $R^K(X_\zeta) \gg r^K$, then we show that we can split the cluster $X_\zeta^K$ into at least two sub-clusters $X_\zeta^{K_1}$ and $X_\zeta^{K_2}$ that are sufficiently separated from each other (Proposition \ref{p:cluster:split}). Then, similar to the division of $x$ into $\{ x^K \}_{K \in \bK}$, but with different division parameters (i.e.\ the permitted values for $d^\bK(x)$ and the diameters of the clusters, see Figure \ref{fig:clusters}), we can divide $X_\zeta$ into $\{ X_\zeta^K \}_{K \in \bL}$ for a partition $\bL$ which is strictly finer than $\bK$ (Lemma \ref{l:it:step}). Iterating over such partitions and the corresponding stopping times, we arrive in at most $N-2$ steps with positive probability  at some stopping time $\eta$ at the trivial partition consisting of $N$ singletons, i.e.\ all particles $X_\eta^i$ are separated by some distance $d > 0$. Then, in particular, $\Dsame(X_\eta) \geq d$, which gives sufficient control to conclude. This iterative argument results in the exponential decay of $c_N$ in $N$ (recall \eqref{cN}).
\medskip

The proof of Proposition \ref{p:it} is given at the end of this section. First, we introduce notation and prepare several lemmas. 

Given $N \geq 2$, $b \in \{-1,1\}^N$, a partition $\bK$ of $\bbrack{1,N}$ and a configuration $x \in (\R^2)^N$, we recall the minimal cluster separation distance $d^\bK(x)$ from \eqref{dbK} and that $|\bK|$ is the number of elements of $\bK$. In addition, we define:
  \begin{itemize}
     \item a \textbf{refinement} $\bL$ of $\bK$ as a partition of $\bbrack{1,N}$ such that for all $L \in \bL$ there exists a $K \in \bK$ such that $L \subset K$. It is \textbf{strict} if $|\bL| \geq |\bK| + 1$ (or, equivalently, $\bL \neq \bK$);
     \item `$\bK$ separates the positive from the negative particles' as the condition that for any $K \in \bK$ and any $i,j \in K$ we have $b^i = b^j$. Note that this condition is independent of $x$; \comm{[Before I thought that this can be seen from comparing $\Dopp(x)$ and $d^\bK(x)$, but that is false. Indeed, for $b = (1,1,-1)$, $x^3$ far away, and $\bK = \{1, (2,3)\}$, we have $\Dopp(x) > d^\bK(x)$ but no pos-neg separation. Such connections come when working with associated partitions defined below]}
   \end{itemize}  

\begin{defn}[Associated partition] \label{d:bK} 
    Let $N \geq 2$, $x \in (\R^2)^N$ and $d_0 > 0$. The associated partition $\bK(x,d_0)$ is defined as the set of connected components of the graph $G = (V, E)$, where $V = \bbrack{1,N}$ is the vertex set and 
\[
  E = \Big\{ (i,j) : i,j \in V \text{ distinct, } |x^i - x^j| < d_0 \Big\}
\]
is the edge set.
\end{defn}

An alternative interpretation of the associated partition is as follows. Given a configuration $x$, there can be various partitions $\bK$ with the same value $d_0$ for $d^\bK(x)$. The finest partition among these is the associated partition. This is made precise by the stronger statement \ref{l:bK:e} in the 
following lemma, which list several properties of associated partitions. 

\begin{lem}[Properties of associated partitions] \label{l:bK}
    Let $N \geq 2$, $x \in (\R^2)^N$ and $d_0 > 0$ and $\bK_0 := \bK(x,d_0)$ be the associated partition. Then:
    \begin{enumerate}[label=(\alph*)]
        \item \label{l:bK:a} $d^{\bK_0}(x) \geq d_0$,
        \item \label{l:bK:b} if $0 < d_1 \le d_0$, then $\bK(x,d_1)$ is a refinement of $\bK(x,d_0)$,
        \item \label{l:bK:e} for any partition $\bL$ of $\bbrack{1,N}$, $\bK(x, d^\bL(x))$ is a refinement of $\bL$,
        \item \label{l:bK:d} for any $K \in \bK(x,d_0)$, $R^K(x) < \frac12 |K|^3 d_0^2$.
    \end{enumerate}
\end{lem}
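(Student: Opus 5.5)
The plan is to prove the four properties in order, using only the definition of connected components of the graph $G$ from Definition \ref{d:bK} and the expression of $R^K$ as an average of squared pairwise distances from \eqref{RK}.

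For \ref{l:bK:a}, take $K \in \bK_0$, $i \in K$ and $j \notin K$. If $|x^i - x^j| < d_0$, then $(i,j)$ would be an edge of $G$, so $j$ would lie in the connected component $K$ of $i$, which is a contradiction. Hence $|x^i-x^j| \ge d_0$, and taking the minimum over $i$, $j$ and $K$ gives $d^{\bK_0}(x) \ge d_0$. (If $|\bK_0| = 1$ the minimum is empty; I would adopt the convention $d^{\bK}=\infty$ there, which is consistent with the statement.)

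For \ref{l:bK:b}, note that the edge set for $d_1 \le d_0$ is contained in the edge set for $d_0$. Every path in the smaller graph is therefore a path in the larger one, so each connected component for $d_1$ lies inside a component for $d_0$. For \ref{l:bK:e}, set $d := d^\bL(x)$ and suppose $(i,j)$ is an edge of the graph associated to $d$, so $|x^i-x^j| < d$. If $i$ and $j$ were in different elements of $\bL$, say $i \in L$ and $j \notin L$, then $d^L(x) \le |x^i-x^j| < d \le d^L(x)$, which is impossible. So every edge joins indices in the same $L \in \bL$, and following paths shows that each component lies in a single $L$. If $d=0$ the graph has no edges, so the partition consists of singletons and the claim is trivial.

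For \ref{l:bK:d}, take $K$ a component with $|K| = m$. For any $i,j \in K$ there is a path in $G$ of at most $m-1$ edges, each of length $< d_0$, so the triangle inequality gives $|x^i-x^j| < (m-1)d_0$. Then
\[
R^K(x) = \frac1{2m}\sum_{i,j\in K}|x^i-x^j|^2 \le \frac{1}{2m} m(m-1)\,(m-1)^2 d_0^2 < \frac12 m^3 d_0^2,
\]
where the diagonal terms vanish. The inequality is strict since it also holds in the case $m=1$, where $R^K=0$. None of these steps is a real obstacle; the only care needed is the empty-minimum convention in \ref{l:bK:a} and \ref{l:bK:e}, and the path-length bound in \ref{l:bK:d}.
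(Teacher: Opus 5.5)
Your proposal is correct and follows essentially the same route as the paper. In particular, (c) uses the same edge-by-edge propagation along paths via the definition of $d^\bL(x)$, and (d) uses the same path-length bound inserted into the pairwise-distance formula for $R^K$. Your bound $(|K|-1)d_0$ in (d) is marginally sharper than the paper's $|K|d_0$, and your remarks on the single-block convention in (a) and the case $d^\bL(x)=0$ in (c) cover edge cases the paper leaves implicit.
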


\begin{proof}
\ref{l:bK:a} and \ref{l:bK:b} are obvious. To prove \ref{l:bK:e}, set $\bK := \bK(x, d^\bL(x))$. Let $K \in \bK$ and $i \in K$ be arbitrary. Take $L \in \bL$ such that $i \in L$. It is sufficient to show that $j \in K \implies j \in L$. This is obvious if $j = i$. If $j \neq i$, then there exists a path $\{i_k\}_{k=0}^m$ on the graph $G$ from Definition \ref{d:bK} with $i_0 = i$, $i_m = j$ and $|x_{i_k} - x_{i_{k-1}}| < d^\bL(x)$ for all $k \in \bbrack{1,m}$. By the definition of $d^\bL(x)$, since $i = i_0 \in L$ and $|x_{i_1} - x_{i_0}| < d^\bL(x)$, we have $i_1 \in L$. Iterating this over the path, we obtain that $j = i_m \in L$.

We prove \ref{l:bK:d} by a similar argument. For any $i,j \in K$, take the shortest path $\{i_k\}_{k=0}^m$ among those mentioned above. Then, $|x^i - x^j| < |K|d_0$ and thus
\[
  R^K(x)
  = \frac1{2 |K|} \sum_{i,j=1}^{|K|} |x^i - x^j|^2
  < \frac{|K|^3}{2} d_0^2.
\]
\end{proof}

When splitting a cluster $x^K$ 
into two subclusters $x^{K_1}$ and $x^{K_2}$, we require a sufficient lower bound on $d^{K_i}(x)$ for $i=1,2$. The following proposition provides certain $K_i$ for which a convenient lower bound holds in terms of $|K|$ and $R^K(x)$. It is stated for $K = \bbrack{1,N}$. Recall that $R = R^{\ig 1,N\id}$ and that $d^L = d^{L^c}$.

\begin{prop}[Splitting a cluster] \label{p:cluster:split}
Let $N \geq 2$. For all $x\in (\R^2)^N$ there exists $K \subset \bbrack{1,N}$ such that 
\begin{equation*}
  1 \leq |K| \leq N-1 \quad \text{and} \quad d^K(x) \geq \sqrt{ \frac{2 R(x)}{N^3} }.
\end{equation*}
\end{prop}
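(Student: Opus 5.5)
Set $d_0 := \sqrt{2R(x)/N^3}$ and use the associated partition $\bK_0 := \bK(x,d_0)$ from Definition \ref{d:bK}; the argument splits according to whether $\bK_0$ is trivial.

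\textbf{Case $R(x) = 0$.} Then $d_0 = 0$, so the bound is vacuous. Any $K$ with $1 \le |K| \le N-1$ works, and such $K$ exist because $N \ge 2$.

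\textbf{Case $R(x) > 0$.} Then $d_0 > 0$, and we look at $\bK_0$.

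\emph{Subcase $|\bK_0| \ge 2$.} Take any $K \in \bK_0$. Then $1 \le |K| \le N-1$. By Lemma \ref{l:bK}\ref{l:bK:a}, $d^K(x) \ge d^{\bK_0}(x) \ge d_0$, which is the required bound.

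\emph{Subcase $|\bK_0| = 1$.} We rule this out. Here $\bK_0 = \{\bbrack{1,N}\}$, so Lemma \ref{l:bK}\ref{l:bK:d} applied with $K = \bbrack{1,N}$ gives
\[
  R(x) < \tfrac12 N^3 d_0^2 = R(x),
\]
a contradiction.

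The only point needing care is the strictness of the inequality in Lemma \ref{l:bK}\ref{l:bK:d}. It holds because the graph path between any two indices uses at most $N-1$ edges, each of length strictly less than $d_0$. This gives $|x^i-x^j| < (N-1)d_0 \le N d_0$ for $i \neq j$. The terms with $i=j$ vanish, so the sum $\frac1{2N}\sum_{i,j}|x^i-x^j|^2$ is strictly below $\frac1{2N}N^2\cdot N^2 d_0^2 = \frac12 N^3 d_0^2$ whenever $N \ge 2$.

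This also explains the choice of $d_0$: it is exactly the threshold at which Lemma \ref{l:bK}\ref{l:bK:d} forbids a single connected component. No real obstacle is expected beyond handling the degenerate case $R(x)=0$ separately.
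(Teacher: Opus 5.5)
Your proof is correct and is essentially the paper's argument. The paper also sets $\alpha=\sqrt{2R(x)N^{-3}}$ and argues that if no valid $K$ exists, then $\bK(x,\alpha)=\{\bbrack{1,N}\}$; Lemma \ref{l:bK}\ref{l:bK:d} then gives the contradiction $R(x)<\frac12N^3\alpha^2=R(x)$. Your case split, using Lemma \ref{l:bK}\ref{l:bK:a} when $|\bK_0|\ge 2$, simply spells out the step that the paper's proof by contradiction leaves implicit.
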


\begin{proof}
If $R(x) = 0$, then the inequality is trivial. When $R(x) > 0$, 
we reason by contradiction. Suppose no such $K$ exists. Then this implies that $\bK(x,\alpha) = \{ \bbrack{1,\dots,N} \}$, where $\alpha := \sqrt{2 R(x) N^{-3}}$. Applying Lemma \ref{l:bK}-(d), we get that 
$$
R(x)< \frac12 N^3 \alpha^2 = R(x), 
$$
which is absurd.
\end{proof} 
 
We use the factor $\sqrt{2 N^{-3}}$ to quantify parts of the following lemma. Ideally, given an $\SC(b,x)$-process $(X_t)_{t \ge 0}$ with lifetime $\tau$, we want to find a strict refinement $\bL$ of $\bK$ such that $d^\bL(x) \geq \sqrt{2 N^{-3}} d^\bK(x)$. This is, however, not possible if all particles within each cluster $x^K$ are very close (i.e.\ $R^K(x)$ is small) with respect to $d^\bK(x)$. In this case, we  show that $(R^K(X_t))_{t \ge 0}$ behaves for each $K \in \bK$ as an independent $\SB(b^K,R^K(x))$-process until particles from other clusters get too close. If $R^K(X_\zeta)$ gets large enough for some $K$ at some stopping time $\zeta$, then we apply Proposition \ref{p:cluster:split} to split $K$ into two subsets. We construct $\bL$ based on this splitting, and set the parameters to obtain the desired bound $d^\bL(X_\zeta) \geq \sqrt{2 N^{-3}} d^\bK(X_\zeta)$. Our task is to show that this construction of $\zeta$ and $\bL$ succeeds with uniformly positive probability. 

\begin{lem}[A single refinement step] \label{l:it:step}
Let $\gamma > 0$, $N \geq 3$, $b \in \{-1,1\}^N$, $x \in \cE \setminus \cE_N$, $\bK$ be a partition of $\bbrack{1,N}$ such that $d^\bK(x) > \Dsame(x)$ and $\bK$ separates the positive from the negative particles.
Let $(\Omega,\cF,(\cF_t)_{t\ge0},\P)$ be a filtered probability space on which is defined a $\SC(b,x)$-process $( X_t )_{t \ge 0}$ with lifetime $\tau$. Then, there exists a stopping time $\zeta$ and an $\cF_\zeta$-measurable partition $\bL$ such that, a.s.
\begin{enumerate}[label=(\alph*)]
  \item \label{pp4} $\zeta < \tau$ and $\zeta \leq d^\bK(x)^2$,
  \item \label{pp7} $\bL$ is a strict refinement of $\bK$,
  \item \label{pp6} if $\zeta = 0$, then $d^\bL(x) \geq \frac17 \sqrt{  2N^{-3} } d^\bK(x)$.
\end{enumerate}  
Furthermore, there exists a deterministic constant $q_* = q_*(\gamma,N) > 0$ such that with probability greater than $q_*$ the intersection of the following events hold:
\begin{enumerate}[resume,label=(\alph*)]
  \item \label{pp1} $
  d^\bL(X_\zeta) \geq \frac17 \sqrt{  2N^{-3} } d^\bK(x)$,
  \item \label{pp3} $
  \Dopp(X_\zeta) \leq \frac67 d^\bK(x) + \Dopp(x)$.
\end{enumerate}
\end{lem}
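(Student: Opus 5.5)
We first rescale using Proposition \ref{p:invar}\ref{p:invar:scale} with $L := d^\bK(x)$, so that $d^\bK(x) = 1$. All statements \ref{pp4}--\ref{pp3} are invariant under this rescaling, because every distance scales by $L$ and times by $L^2$. Since $\bK$ separates the positive from the negative particles, every $K \in \bK$ satisfies $|\sum_{i\in K} b^i| = |K|$. Hence $\delta_K > 2$ whenever $|K|\ge 2$.

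Since $d^\bK(x) > \Dsame(x)$, the pair realising $\Dsame(x)$ lies in a common cluster. So some $K \in \bK$ has $|K|\ge2$, and a strict refinement always exists. On any event where the construction below fails, we set $\bL$ to be $\bK$ with one cluster of size $\ge 2$ split arbitrarily. This secures \ref{pp7} a.s., since \ref{pp4}--\ref{pp6} are the only a.s.\ requirements.

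We fix the threshold $h := \frac1{49}$ and split into two cases.

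\emph{Case 1: some $K\in\bK$ has $R^K(x) \geq h |K|^3/N^3$.} We take $\zeta = 0$. We apply Proposition \ref{p:cluster:split} to $x^K$ (with $|K|$ in place of $N$) to get $\emptyset\ne K_1 \subsetneq K$ with
\[
\min_{i\in K_1,\, j\in K\setminus K_1}|x^i-x^j| \ge \sqrt{2R^K(x)/|K|^3} \ge \tfrac17\sqrt{2N^{-3}}.
\]
We let $\bL$ be $\bK$ with $K$ replaced by $K_1, K\setminus K_1$. Then $d^\bL(x) \ge \min\{1, \frac17\sqrt{2N^{-3}}\}$, which gives \ref{pp6} and \ref{pp1}, and \ref{pp3} is trivial.

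\emph{Case 2: all clusters satisfy $R^K(x) < h|K|^3/N^3 \le h$.} Then each particle of $x^K$ is within $\sqrt{R^K(x)}<\frac17$ of $M^K(x)$. We apply Corollary \ref{c:Girs:bK} with this $\bK$, $T=1$ and $\e = \frac17$. This gives $\Q$ and independent processes $S^K$ (Brownian motions) and $Z^K$ ($\SB(\delta_K)$ processes) that coincide with $M^K(X_t)$ and $R^K(X_t)$ up to $\sigma$. We define $\zeta$ as the minimum of three times:
\begin{itemize}
\item the first time some $R^K(X_t)$ reaches $h|K|^3/N^3$;
\item the first time some $|M^K(X_t)-M^K(x)|$ reaches $\frac1{14}$, or some other $R^{K'}(X_t)$ exceeds $h$;
\item time $1$.
\end{itemize}
At $\zeta$ we set $\bL$ by splitting the cluster whose dispersion reached the threshold, exactly as in Case 1, with $X_\zeta$ in place of $x$.

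Up to $\zeta$, the cluster means move less than $\frac1{14}$ and cluster radii stay below $2\sqrt h=\frac27$. Hence
\[
d^\bK(X_t) \ge 1 - \tfrac2{14} - \tfrac47 = \tfrac27 > \tfrac17 \quad \text{on } [0,\zeta],
\]
so no inter-cluster collision occurs before $\zeta$. Intra-cluster collisions are same-sign collisions, which have probability zero by Corollary \ref{c:no:PPcol} (strong Markov at the relevant times). Together these give $\zeta<\tau$ a.s. and $\zeta\le1$, which is \ref{pp4}. The same displacement bound gives \ref{pp3}: the pair realising $\Dopp(x)$ moves by at most $2(\frac1{14}+\frac27) = \frac67$ in total. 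On the good event, \ref{pp1} follows as in Case 1, together with the separation $\frac27$ between old clusters.

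The good event is that the first exit is due to a dispersion threshold before time $1$, with all means staying in their $\frac1{14}$-balls and the other dispersions staying below $h$. Under $\Q$ it has probability bounded below uniformly in $x$. This uses independence, Brownian small-ball estimates, and the following facts for an $\SB(\delta_K)$ process with $\delta_K>2$ started below the threshold:
\begin{itemize}
\item it reaches the threshold before time $1$ with positive probability, uniformly in the start (by the comparison theorem against a start at $0$, not stopped, and Remark \ref{r:BP});
\item for the remaining clusters, it stays below $h$ over $[0,1]$ with positive probability uniformly (comparison against start at level $h|K|^3/N^3 < h$).
\end{itemize}
The lower bound transfers to $\P$ by the Cauchy--Schwarz argument \eqref{pfxl}.

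I expect the main obstacle to be the uniformity of this probability bound. The dispersions may start arbitrarily small, so I have to rely on the monotone comparison in the initial value and on the non-stopped squared Bessel process from $0$, not on the scale of $x$. A second point to check is the bookkeeping of constants, so that the separation $\frac17\sqrt{2N^{-3}}$ and the factor $\frac67$ come out exactly.
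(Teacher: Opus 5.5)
Your proposal is correct and follows essentially the same route as the paper. You rescale to $d^\bK(x)=1$, take $\zeta=0$ when an immediate split is available, and otherwise stop when a cluster dispersion reaches a threshold or a cluster mean moves; the uniform lower bound then comes from Corollary \ref{c:Girs:bK}, small-ball estimates for the means, comparison with a non-stopped $SqB(2,0)$ process, and the Cauchy--Schwarz transfer \eqref{pfxl}.

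The differences are cosmetic. You phrase the case split and build $\bL$ via the threshold $\frac1{49}|K|^3/N^3$ and one application of Proposition \ref{p:cluster:split}, rather than via the associated partitions $\bK(x,d_N)$ and $\bK(X_\zeta,d_N)$ with $d_N=\frac17\sqrt{2N^{-3}}$. By Lemma \ref{l:bK}\ref{l:bK:d}, the paper's case $\bK(x,d_N)=\bK$ lies inside your Case 2. Two small points: requiring the other dispersions to stay below $h$ is redundant, and $2(\frac1{14}+\frac27)=\frac57$ rather than $\frac67$, which still gives \ref{pp3}.
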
 

\begin{proof}
We start with some preparations. Since $\bK$ separates the positive from the negative particles, we have $|\bK| \geq 2$. From $d^\bK(x) > \Dsame(x)$ it follows that there exists a $K \in \bK$ with $|K| \geq 2$. In particular, $|\bK| \leq N-1$. In addition, we may assume that $d^\bK(x) = 1$. Indeed, applying Proposition \ref{p:invar}\ref{p:invar:scale} with $a := d^\bK(x)$, we 
 obtain that $(\tilde X_t)_{t \ge 0}$ defined by $\tilde X_t := \frac1a X_{a^2t}$ is an $\SC(b, \frac xa)$-process with lifetime $\tilde \tau := a^{-2} \tau$, adapted to $(\tilde \cF_t)_{t\ge 0} := (\cF_{a^2t})_{t\ge 0}$. Since the events in Lemma \ref{l:it:step}\ref{pp1},\ref{pp3} are invariant under rescaling by $a$, the constant $q_*$ does not depend on $a$.

Let $\bK_1$ be the partition associated to $(x, d^\bK(x))$ (recall Definition \ref{d:bK}). Take 
\begin{equation} \label{pfxc}
    d_N 
    := \frac17 \sqrt{ \frac2{N^3} }
    < 1 = d^\bK(x)
\end{equation} 
and let $\bK_N$ be the partition associated to $(x, d_N)$. We observe that:
\begin{itemize}
    \item by Lemma \ref{l:bK}\ref{l:bK:e}, $\bK_1$ is a refinement of $\bK$, 
    \item by Lemma \ref{l:bK}\ref{l:bK:b}, $\bK_N$ is a refinement of $\bK_1$, and hence $d^{\bK_N}(x) \leq d^\bK(x)$,
    \item by Lemma \ref{l:bK}\ref{l:bK:a}, $
  d^{\bK_N}(x) 
  \geq d_N
  = \frac17\sqrt{2 N^{-3}}
$,
and
  \item by Lemma \ref{l:bK}\ref{l:bK:d}, 
\end{itemize}
\begin{equation} \label{pfyh}
  R^K(x)
  < \frac{|K|^3}2d_N^2
  < \frac1{49}
  \qquad \text{for all } K \in \bK_N.
\end{equation}
From the first two observations, we either have that $\bK_N$ is a strict refinement of $\bK$ or that $\bK_N = \bK$. In the former case, we take $\zeta := 0$ and $\bL := \bK_N$. Then, it is easy to see that Lemma \ref{l:it:step}\ref{pp4}--\ref{pp6} hold.
\medskip

In the remainder we assume the latter case, i.e.\ $\bK_N = \bK$. We start by constructing $\zeta$. For each $K \in \bK$ and all $t \ge 0$, we denote $R_t^K := R^K(X_t)$ and $M_t^K := M^K(X_t)$. By \eqref{pfyh},
\begin{equation} \label{pfye}
  \eta_K := \inf \Big\{ t\ge 0 : R_t^K \ge \frac1{49} \Big\}, \qquad 
  \eta := \min_{K \in \bK} \eta_K > 0. 
\end{equation}
In addition, we consider the stopping time 
\begin{equation} \label{pfyg}
  \xi := \inf \Big\{ t \ge 0 : \max_{K \in \bK} |M_t^K - M_0^K| \geq \frac17 \Big\} > 0.
\end{equation}
Finally, we set 
\begin{equation*} 
  \zeta := 1 \wedge \tau \wedge \eta \wedge \xi > 0 \quad \mbox{a.s.}
\end{equation*}
This already proves \ref{pp6} and the second upper bound in \ref{pp4}.

Next we prove the auxiliary estimates 
\begin{align} \label{pfxb}
    \max_{0 \leq t \leq \zeta} \max_{K \in \bK} \max_{i \in K} |X_t^i - M_t^K| &\leq \frac17,
    \\\label{pfyb}
    \min_{0 \leq t \leq \zeta} d^\bK(X_t) &> \frac17. 
\end{align}
Take $t \in [0,\zeta]$, $K \in \bK$ and $i \in K$. From the definition of $R^K$ we obtain $|X_t^i - M_t^K|^2 \leq  R_t^K$. Then, \eqref{pfxb} follows from $t \leq \zeta \leq \eta$ and \eqref{pfye}. To prove \eqref{pfyb}, take $t \in [0,\zeta]$, distinct $K, K' \in \bK$, $i \in K$ and $j \in K'$ such that $d^\bK(X_t) = |X_t^i - X_t^j|$. Then, using \eqref{pfxb},
      \begin{equation*} 
        d^\bK(X_t)
        \geq |M_t^K - M_t^{K'}| - |X_t^i - M_t^K| - |X_t^j - M_t^{K'}|
        \geq |M_t^K - M_t^{K'}| - \frac27.
      \end{equation*}
By $t \leq \zeta \leq \xi$ and \eqref{pfyg}, we have $|M_t^K - M_t^{K'}| \geq |M^K(x) - M^{K'}(x)| - \frac27$. Applying \eqref{pfxb} at initial time, we obtain
\[
  |M^K(x) - M^{K'}(x)|
  \geq |x^i - x^j| - |M^K(x) - x^i|  - |x^j - M^{K'}(x)|
  \geq |x^i - x^j| - \frac27.
\]
Clearly, $|x^i - x^j| \geq d^\bK(x) = 1$. Trailing back the inequalities, we obtain \eqref{pfyb}.
\vspace{0.3cm}
      
Next we complete the proof of \ref{pp4} by showing that $\zeta < \tau$. Take any $t \in [0,\zeta]$ and any $i,j$ with $b^i = -b^j$. Since $\bK$ separates the positive and negative particles, $i$ and $j$ belong to different index sets of $\bK$. Hence, by \eqref{pfyb}
      \begin{equation*} 
        |X_t^i - X_t^j|
        \geq d^\bK(X_t)
        \geq \frac17. 
      \end{equation*}
Thus, $\Dopp(X_t) \geq \frac17$. Consequently, during $[0,\zeta]$ the only type of collision that can happen is a same-sign collision. Corollary \ref{c:no:PPcol} states that this does not happen. Hence, $\zeta < \tau$.

\vspace{0.3cm}

In preparation for constructing $\bL$, we prove
\begin{equation} \label{pfyd}
  \P(A) \geq q_*,
  \qquad A := \{ \zeta = \eta < 1 \wedge \tau \}
\end{equation}
for some $q_* = q_*(\gamma, N) > 0$.
According to Corollary \ref{c:Girs:bK} applied with $\e = \frac18$ and $T=1$, there exist a probability $\Q$ equivalent to $\P$,  a constant $C = C(N,\gamma) > 0$ such that $C^{-1} \le \E_{\P}[(\dd \Q/\dd \P)^2] \le C$, and $\Q$-independent processes $(S_t^K)_{t \ge 0}$ ($\Q$-Brownian motions) and $(Z_t^K)_{t \ge 0}$ ($\Q$-$\SB(\delta_K, R_0^K$-processes) (see Corollary \ref{c:Girs:bK} for the precise properties) such that 
\begin{equation} \label{pfxq}
    M_t^K = S_t^K
    \quad \text{and} \quad 
    R_t^K  = Z_t^K
    \quad \text{for all $K \in \bK$ and all } t \in [0, 1 \wedge \tau \wedge \sigma],
\end{equation} 
where $\sigma := \inf \{ t \ge 0 : d^\bK(X_t) < \frac18 \}$. From \eqref{pfyb} we obtain $\sigma > \zeta$. In addition, by applying the Cauchy-Schwartz inequality (see \eqref{pfxl} for details), we obtain $\P(A) \geq \frac1C \Q(A)^2$.

In the remainder of the proof of \eqref{pfyd}, we bound $\Q(A)$ from below.
We do this by introducing  
$\Q$-independent events $A_1, A_2 \in \cF$ with $\Q(A_1), \Q(A_2) > 0$ and $A_1 \cap A_2 \subset A$.
First, given an integer $k \geq 1$ and a $2k$-dimensional $\Q$-Brownian motion  $(W_t^{2k})_{t \ge 0}$ starting from $0$ and scaled by $k^{-1/2}$, let 
\begin{equation*} 
  q_1(k) := \Q \Big( \sup_{0 \leq t \leq 1} | W_t^{2k} | < \frac17 \Big) > 0.
\end{equation*}
Set $q_2:= \min_{k\in \ig 1,N\id} q_1(k) >0$. In particular, for all $K \in \bK$,
 since $(S_t^K)_{t \ge 0}$ is a $2|K|$-dimensional $\Q$-Brownian motion scaled by $|K|^{-1/2}$ and started at $M^K(x)$,
\begin{equation*} 
  \Q \Big( \sup_{0 \leq t \leq 1} | S_t^K -  M^K(x) | < \frac17 \Big) \geq q_2.
\end{equation*}
Second, let $K \in \bK$ be such that $|K| \geq 2$. Since $|\bK| \leq N-1$, such a $K$ exists. Since $\bK$ separates the positive from the negative particles, we have by \eqref{deltaK} that $\delta_K = \gamma |K|(|K|-1) + 2(|K|-1) > 2$. Hence, since $(Z_t^K)_{t \ge 0}$ is a $\Q$-$\SB(\delta_K, R^K(x))$-process, we have by Theorem \ref{t:BP}\ref{t:BP:geq2} that $(Z_t^K)_{t \ge 0}$ does not hit $0$ a.s. Thus, $(Z_t^K)_{t \ge 0}$ satisfies the SDE \eqref{R:Bessel:SDE} for all $t \ge 0$. Then, by the comparison theorem (see e.g.\ \cite[Theorem 3.7, p394]{RevuzYor99}) there exists a $SqB(2,0)$-process $(Z_t)_{t \ge 0}$ (not stopped at $0$) under $\Q$ such that a.s.\ $Z_t^K \geq Z_t$ for all $t \ge 0$. Finally, from Remark \ref{r:BP} we obtain
\begin{equation*}
  \Q \Big( \sup_{0 \leq t \leq 1}  Z_t^K > \frac1{49} \Big) \geq \Q \Big( \sup_{0 \leq t \leq 1}  Z_t > \frac1{49} \Big) =: q_3 > 0.
\end{equation*}
Combining the events above and applying the independence of $(S_t^K)_{t\ge 0}$ and $(Z_t^K)_{t\ge 0}$, we obtain $\Q(A_1 \cap A_2) \geq q_2^{N-1} q_3$, where
\begin{align*}
  A_1 &:= \Big\{ \max_{K \in \bK} \sup_{0 \leq t\le 1} |S_t^{K} - M^{K}(x)| < \frac17 \Big\}, \qquad 
  A_2 := \Big\{ \sup_{0 \leq t \leq 1} \max_{K \in \bK} Z_t^K > \frac1{49}
       \Big\}.
\end{align*} 

Finally, we show that $\Q(A_1 \cap A_2) \leq \Q(A)$, which is the last ingredient for concluding \eqref{pfyd} with $q_* = \frac1C q_2^{2(N-1)} q_3^2$. Since $\zeta < \tau$, we have $\zeta = 1 \wedge \eta \wedge \xi < \tau$. On $A_2$, we have $\eta < 1$ and thus $\zeta = \eta \wedge \xi < 1 \wedge \tau$.  Recalling \eqref{pfxq}, we have on $A_1$ that $\xi \geq 1 \wedge \tau \wedge \sigma$. Since $\sigma > \zeta$, we have on $A_1$ that 
$\sigma \geq 1 \wedge \tau \implies \xi \geq 1 \wedge \tau$ and $\sigma < 1 \wedge \tau \implies \xi \geq \sigma > \zeta$. Hence, in either case, on $A_1 \cap A_2$, we have $\xi > \zeta$. Thus, $A_1 \cap A_2 \subset A$. 
\vspace{0.3cm}

With \eqref{pfyd} established, we construct $\bL$ separately on $A^c$ and $A$, and prove the remaining statements \ref{pp7}, \ref{pp1} and \ref{pp3}. 
On $A^c$, we take $\bL$ as the finest partition (i.e.\ $|\bL| = N$). Then, \ref{pp7} obviously holds on $A^c$. By \eqref{pfyd} it is sufficient to prove \ref{pp1} and \ref{pp3} only on $A$.
  
On $A$, let $\bL$ be the partition associated with $(X_\zeta, d_N)$, where we recall $d_N = \frac17 \sqrt{  2 N^{-3} } $ from \eqref{pfxc}. Obviously, $\bL$ is $\cF_\zeta$-measurable, and \ref{pp1} follows directly from Lemma \ref{l:bK}\ref{l:bK:a} (recall $d^\bK(x) = 1$).
To prove \ref{pp7}, we first apply \eqref{pfyb} with $t = \zeta$ to obtain 
\begin{equation*} 
    d^{\bK}(X_{\zeta}) 
    \geq \frac17 
    > d_N.
\end{equation*}
Then, by Lemma \ref{l:bK}\ref{l:bK:b}, $\bL$ is a refinement of the partition $\bK_\zeta$ associated to $(X_\zeta, d^{\bK}(X_{\zeta}))$. In addition, by Lemma \ref{l:bK}\ref{l:bK:e}, $\bK_\zeta$ is a refinement of $\bK$. To show that the refinement $\bL$ of $\bK$ is strict, let $K \in \bK$ be such that $\zeta = \eta_K$. Then, $|K| \geq 2$, and Proposition \ref{p:cluster:split} applied with $X_{\zeta}^K$ guarantees that there exists a partition $\{K^1, K^2\}$ of $K$ such that  
  \begin{equation*} 
    \min_{i \in K^1} \min_{j \in K^2} |X_{\zeta}^i - X_{\zeta}^j|
    \geq \sqrt{ \frac{ 2 R_{\zeta}^{K} }{|K|^3} } \geq \frac17 \sqrt{ \frac2{N^3} }.  
  \end{equation*}
This completes the proof of \ref{pp7}. 

\vspace{0.3cm}

Finally we prove \ref{pp3} by showing that on $A$, $\Dopp(X_\zeta) \le \frac67 + \Dopp(x)$. We argue similarly to the proof of \eqref{pfyb}, but here with opposite inequalities. Let $K, L \in \bK$, $i \in K$ and  $j \in L$ be such that $b^i = -b^j$ and $\Dopp(x) = |x^i - x^j|$. Then, 
    \begin{align*} 
      \Dopp(X_{\zeta}) 
      \leq |X_{\zeta}^i - X_{\zeta}^j|
        &\leq |X_{\zeta}^i - M_{\zeta}^K| + |M_{\zeta}^K - M_{\zeta}^{L}| + |M_{\zeta}^{L} - X_{\zeta}^j| \\
        &\leq |M_\zeta^K - M^K(x)| + |M^K(x) - M^{L}(x)| + |M^L(x) - M_\zeta^{L}| + \frac27 \\
        &\leq |M^K(x) - x^i| + |x^i - x^j| + |x^j - M^{L}(x)| + \frac47 \\
        &\leq \Dopp(x) + \frac67.
    \end{align*} 
\end{proof}

\begin{proof}[Proof of Proposition \ref{p:it}]
According to the strong Markov property, we define iteratively the (random) sequences $(\zeta^k)_{k\ge 0}$ of $(\cF_t)_{t\ge 0}$-stopping times and $(\bK^k)_{k\ge 0}$  of $\cF_{\zeta^k}$-measurable partitions of $\ig 1,N\id$. Let $\zeta^0 := 0$ and $\bK^0$ be the partition associated to $(x, \Dopp(x))$ (recall Definition \ref{d:bK}). This implies $d^{\bK^0}(x) = \Dopp(x)$. Combining this with $x\notin \cE_N$, we observe that $\bK^0$ satisfies the conditions in the setting of Lemma \ref{l:it:step}. In the following, we evaluate $(X_t)_{t\ge 0}$ only at $\zeta^k$; we abuse notation and abbreviate $X_k := X_{\zeta^k}$. For all $k \ge 0$:
\begin{itemize}
  \item On the event 
  \[
    B_k 
    := \{ d^{\bK^k}(X_k) > \Dsame(X_k) \} \cap \{ X_k \notin \cE_N \}
    \in \cF_{\zeta^k},
  \]
  Lemma \ref{l:it:step} provides
  a stopping time $\zeta^{k+1} \in [\zeta^k, \tau)$ and an $\cF_{\zeta^{k+1}}$-measurable strict refinement  $\bK^{k+1}$ of $\bK^k$ such that \comm{[below, $B_k$ is st it matches the needed conditions on $x,d$, and $A_k$ st it fits the events that happen wp gtr than $q$]}
  \begin{equation} \label{pfxg}
    \indiq_{B_k} \P(A_{k+1}|\cF_{\zeta^{k}}) \geq q_* \indiq_{B_k}, 
  \end{equation}
  where $q_* = q_*(N,\gamma)>0$ is a deterministic constant and $A_{k+1} \in \cF_{\zeta^{k+1}}$ is
  \begin{equation} \label{pfxf}
    A_{k+1} 
    = \Big\{ d^{\bK^{k+1}}(X_{k+1}) \geq \frac17 \sqrt{  2N^{-3} } d^{\bK^{k}}(X_{k}) \Big\} \bigcap \Big\{\Dopp(X_{k+1}) \leq \frac67 d^{\bK^{k}}(X_{k}) + \Dopp(X_k) \Big\},
  \end{equation}
  which comes from Lemma \ref{l:it:step}-(e),(f). 
  \item On the event $B_k^c$, we simply set $\zeta^{k+1} = \zeta^k$ and $\bK^{k+1} = \bK^{k}$. 
\end{itemize}
It is easy to see that this iterative construction is well-defined. Indeed, $\bK^k$ is for all $k \geq 1$ a refinement of $\bK^0$, and therefore it separates the positive from the negative particles. Also, $\zeta_k < \tau$ for all $k \geq 0$. Hence, for each $k \geq 0$, on $B_k$, the couple $(X_k, \bK^k)$ satisfies the assumptions in Lemma \ref{l:it:step}.

Let for all $\omega\in \Omega$, $L(\omega) := \inf \{ k \ge 0 : \omega \in B_k^c\}$. Since the sequence $(B_k)_{k\ge 0}$ is nonincreasing in the sense of inclusion, $\{  k \le L-1\} = B_k$ and $\{  k \ge L\} = B_k^c$. Since $|\bK^0| \geq 2$, and $|\bK^{k+1}| \geq |\bK^k| + 1$ on $B_k$, we have $\P(B_{N-2})=0$ and thus $L \leq N-2$ a.s. We set $\eta := \zeta_L$, and note that $\eta \le \zeta_{N-2} < \tau$.

Next we show that a.s. $\eta > 0$. We reason on the event $\{ \eta = 0 \}$. Then, we have $\zeta_k = 0$ for all $k \ge 0$, thus $X_k = x \notin \cE_N$ for all $k \ge 0$ and $L = \inf \{ k\ge 0 : d^{\bK^k}(x)\le \Dsame(x)\}$. Then, according to Lemma \ref{l:it:step}\ref{pp6} and $L \leq N-2$, we get
\begin{align*}
  \Dsame(x) 
  \ge d^{\bK^L}(x)
  \geq \Big( \frac17 \sqrt{  2N^{-3} } \Big)^{N-2} d^{\bK^0}(x)
  \geq c_N d^{\bK^0}(x)
  = c_N \Dopp(x),
\end{align*}
which contradicts with $x \notin \cE_N$. Then $\P(\eta = 0) = 0$.

\vspace{0.3cm}

Last we prove $\P(X_\eta \in \cE_N) \geq q$ for some $q = q(\gamma, N) > 0$. We consider the event \comm{[In the below, note that the $\in \cF_{\zeta^{k+1}}$ follows from $L$ being a discrete $(\cF_{\zeta^{k}})_k$-stopping time]} 
$$
D:=\cap_{k\ge 1} C_k, \qquad 
C_k := (B_{k-1} \cap A_k) \cup B_{k-1}^c \in \cF_{\zeta^k}.
$$
First, we construct $q > 0$ and show that $\P(D) \geq q$. For $k\ge N-2$, we have $L \leq k$. Then, $\P(B_k^c) = 1$ and thus $\P(C_{k+1}) = 1$. Therefore, $\P(D) = \P(D_{N-2})$, where $D_\ell := \cap_{k=1}^\ell C_k \in \cF_{\zeta^{\ell}}$ for all $1 \le \ell \le N-2$.
We get for all $1 \leq \ell \le N-3$ by the strong Markov property and \eqref{pfxg}
\begin{align*}
  \P(D_{\ell+1})
  & =  \P(A_{\ell+1} \cap B_\ell \cap D_\ell) + \P( (B_\ell)^c \cap D_\ell) \\
  &= \E [ \indiq_{D_\ell} \indiq_{B_\ell} \P(A_{\ell+1}|\cF_{\zeta_\ell} ) ]+\P( (B_\ell)^c \cap D_\ell) \\
  &\geq q_* \P ( B_\ell \cap D_\ell ) + \P( (B_\ell)^c \cap D_\ell) \\
  &\ge q_* \P(D_\ell).
\end{align*}
We conclude that $\P(D) \ge q_*^{N-3}\P(A_1)\ge q_*^{N-2} =: q$.

Second, we show that $D \subset \{X_\eta \in \cE_N\}$. We reason on $D$. By definition of $L$, we either have $X_\eta \in \cE_N$, in which case we are done, or $X_\eta\notin \cE_N$ and 
$d^{\bK^L}(X_L) \leq \Dsame(X_L)$. 
In the latter case we set
\begin{equation*}
  g_k 
     := \frac{ \Dopp( X_k ) }{d^{\bK^k} ( X_k )}
     \quad \mbox{for all } k\ge 0.
\end{equation*}
Note that $g_0 = 1$. By definition of $D$ and $L$, we obtain from \eqref{pfxf} that for all $k\in \ig 0,L-1\id$, (setting $\alpha := \frac17 \sqrt{  2N^{-3} } $)
  \begin{align*}
     g_{k+1} 
     = \frac{ \Dopp( X_{k+1} ) }{d^{\bK^{k+1}} ( X_{k+1} )}
     \leq \frac{ \frac67 d^{\bK^k} (X_k) + \Dopp( X_k ) }{ \alpha d^{\bK^k} ( X_k ) } 
     = \frac1\alpha \Big( g_k + \frac67 \Big).
   \end{align*} 
   Solving with equality yields 
   \begin{equation*}
     g_k \leq A \alpha^{-k} + B, \qquad A = 1 + \frac{6}{7(1 - \alpha)}, \quad B = \frac{-6}{7(1 - \alpha)}.
   \end{equation*}
   Recalling $d^{\bK^L}(X_L) \leq \Dsame(X_L) = \Dsame(X_\eta)$ and $L\le N-2$, we get
   \begin{equation*}
     \frac{\Dopp (X_\eta)}{\Dsame (X_\eta)}
     \leq \frac{\Dopp(X_L)}{d^{\bK^L}(X_L)}
     = g_{L} \leq A \alpha^{-N+2} + B.
   \end{equation*}
   Applying $B \leq 0$ and $A \leq 7$, we obtain
   \begin{equation*}
     \frac{\Dopp (X_\eta)}{\Dsame (X_\eta)}
     \leq \frac{ 7^{N-1} }{ 2^{\frac12 (N - 2)} } N^{\frac32 (N-2)} < \frac1{c_N}.
   \end{equation*}
\end{proof}

\section{Global well-posedness}
\label{s:mainT}

In this section, we give a rigorous meaning to the SDE system \eqref{SDE}. In particular, in Definition \ref{d:SDE} below we formalize the collision rule from Section \ref{s:intro:SC} given by particle removal and define the global solution. In Theorem \ref{t:SDE} below we formalize Properties \ref{thma}--\ref{thme} from Section \ref{s:intro:main:results}. We prove Theorem \ref{t:SDE} by iteratively applying Lemma \ref{l:exun:X}, Proposition \ref{p:cty:tau} and Theorem \ref{t:3plus} over the collision times. 
 
To state Definition \ref{d:SDE}, we add an abstract cemetery point $\triangle$ to $\R^2$ (as the state space of the position of a particle) to put the particle at upon removal. Since we need a concept of continuity for particle trajectories on $\R^2\cup \{\triangle\}$, we need to introduce a topology on it. By interpreting $\R^2\cup \{\triangle\}$ as embedded in $\R^3$ as $(\R^2 \times \{0\}) \cup \{ (0,0,1) \}$, we take the topology as inherited from the Euclidean distance in $\R^3$. Let 
$$
\cE_\triangle := \big\{ x\in (\R^2\cup \{\triangle\})^N : \mbox{ for all } i,j : x^i\in (\R^2\setminus \{x^j \} )\cup \triangle\big\}
$$
be the set of configurations in which all remaining particles have distinct positions. We equip it with the product topology.
For all $x\in \cE_\triangle$, let 
$$
  \cN(x) :=  \{i\in \ig 1,N\id : x^i \ne \triangle\}
$$ 
be the index set of the remaining particles.

\begin{defn}[Solution of {\eqref{SDE}}] \label{d:SDE} 
Let $\gamma > 0$, $N \geq 2$, $b \in \{-1,1\}^N$ and $(\Omega,\cF,(\cF_t)_{t\ge 0},\P)$ be a filtered probability space on which is defined a $2N$-dimensional standard Brownian motion $(B^1_t,\dots,B^N_t)_{t\ge 0}$ and an $\cF_0$-measurable r.v.\ $X_0 \in \cE$. A c\`adl\`ag, $(\cF_t)_{t\ge 0}$-adapted, $\cE_\triangle$-valued process $(X_t)_{t\ge 0}$ is a solution of \eqref{SDE} if it satisfies the following:
\begin{itemize}
\item denoting for all $x\in \cE_\triangle$ by $\P_x$ the law of $(X_t)_{t\ge 0}$ starting from $x$, we have that 
$$
(\Omega,\cF,(\cF_t)_{t\ge 0},\P,(X_t)_{t\ge 0},(\P_x)_{x\in \cE_\triangle})
$$
is a strong Markov process,
    \item denoting $\cN_t := \cN(X_t)$ for all $t\ge 0$, $(\cN_t)_{t\ge 0}$ is a.s.\ nonincreasing for the inclusion, 
    \item define iteratively the stopping times $\tau_0 := 0$ and
    $$
    \tau_{k+1} := \inf\{ t \ge \tau_k : |\cN_t| < |\cN_{\tau_k}| \} \qquad \text{for all } k \in \N.
    $$
    Let $m$ be the (random) integer such that $\tau_m < \tau_{m+1} = \infty$.
    We have that: 
    \begin{enumerate}
        \item for all $k \ge 0$ with $\tau_k < \infty$ and all $t\in [\tau_k,\tau_{k+1})$, 
    $$
    \mbox{ for all } i\in \cN_{\tau_k}, \quad X^i_t = X^i_{\tau_k-} + B^i_t - B^i_{\tau_k} + \gamma \int_{\tau_k}^t\sum_{j\in \cN_{\tau_k}} b^ib^j f(X^i_s - X^j_s)\dd s,
    $$
    where $X_{0-} := X_0$,
        \item for all $k\in \ig 1,m \id$ there a.s.\ exist $L \in \N$ and disjoint $ \{ i_\ell, j_\ell \} \subset \cN_{\tau_{k-1}}$ for $\ell \in \bbrack{1,L}$ such that
        $\cN_{\tau_{k-1}} \setminus \cN_{\tau_{k}} = \cup_{\ell = 1}^L \{ i_\ell, j_\ell \}$ and for all $\ell \in \bbrack{1,L}$ we have $X_{\tau_{k}-}^{i_\ell} = X_{\tau_{k}-}^{j_\ell}$ and $b^{i_\ell} = -b^{j_\ell}$.
    \end{enumerate}     
\end{itemize}
\end{defn}

We make several remarks to parse Definition \ref{d:SDE}. The monotonicity of $\cN_t$ implies that once particles are removed, they can never return back to the system. Since $X_t$ is c\`adl\`ag, $\cN_t$ is c\`adl\`ag too. The stopping times $\tau_k$ mark, when positive and finite, the times at which some particles are removed. $m$ is the number of times such removals happen, which is obviously bounded from above by $N$. In point 1, by construction we have that $\{ [\tau_k,\tau_{k+1}) \}_{k=0}^m$ is a tessellation of $[0,\infty)$ of nonempty intervals; hence the SDE gives, through the strong Markov property, a rigorous sense for the SDE in \eqref{SDE}. Note that the SDE covers the special cases $|\cN_t| \in \{0,1\}$, and that the term `$X^i_{\tau_k-}$' guarantees the continuity of the trajectories at $\tau_k$ of the remaining particles. Finally, point 2 specifies that particles can only be removed if the conditions of the removal rule stated in Section \ref{s:intro:SC} are met. Then, the particle separation condition given by $X_t \in \cE_\triangle$ forces that instantly particle pairs are removed until no two particles are at the same position in $\R^2$. This formalizes the `collision rule' mentioned in \eqref{SDE}.

\begin{thm}[Well-posedness and properties] \label{t:SDE}
Let $\gamma$, $N$, $b$, $(\Omega,\cF,(\cF_t)_{t\ge 0},\P)$, $(B^1_t,\dots,B^N_t)_{t\ge 0}$ and $X_0$ be as in Definition \ref{d:SDE}. Then, there exists a unique (up to indistinguishability) solution $(X_t)_{t\ge 0}$ of \eqref{SDE}. Moreover, with $\cN_t, \tau_k, m$ defined from $(X_t)_{t\ge 0}$ as in Definition \ref{d:SDE}, $(X_t)_{t\ge 0}$ satisfies:
\begin{enumerate}[label=(\roman*)]
    \item $\displaystyle m = \frac12 \Big( |\cN_0| - \Big|   \sum_{i \in \cN_0} b^i \Big| \Big)$, and
    \item $|\cN_{\tau_{k-1}} \setminus \cN_{\tau_{k}}| = 2$ for all $k \in \bbrack{1,m}$. 
\end{enumerate}
\end{thm}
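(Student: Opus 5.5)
We build the solution by iterating over collision times, concatenating $\SC$-processes on the successive remaining index sets. Set $\tau_0:=0$ and $\cN_0=\bbrack{1,N}$. Given $\tau_k<\infty$ and the $\cF_{\tau_k}$-measurable state $X_{\tau_k}$, whose remaining particles $\cN_{\tau_k}$ occupy distinct positions, we apply Lemma \ref{l:exun:X} to the shifted Brownian motion $(B^{\cN_{\tau_k}}_{\tau_k+s}-B^{\cN_{\tau_k}}_{\tau_k})_{s\ge0}$ with signs $b^{\cN_{\tau_k}}$ and initial condition $X^{\cN_{\tau_k}}_{\tau_k}$. This gives a strong $\SC(b^{\cN_{\tau_k}})$-process with lifetime $\tau'$, and we set $\tau_{k+1}:=\tau_k+\tau'$. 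Removed particles are kept at $\triangle$.

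By Proposition \ref{p:cty:tau}, the left limit $X_{\tau_{k+1}-}$ exists in $(\R^2)^{|\cN_{\tau_k}|}$ on $\{\tau_{k+1}<\infty\}$. By Theorem \ref{t:3plus}, either all remaining signs agree, in which case $\tau_{k+1}=\infty$ a.s., or a.s.\ $\tau_{k+1}<\infty$ and $X_{\tau_{k+1}-}\in\cG_2$. In the latter case exactly one pair $\{i,j\}$ with $b^i=-b^j$ coincides and all other particles are distinct. We define $X_{\tau_{k+1}}$ by sending $i,j$ to $\triangle$ and keeping the others at their left limits. The result lies in $\cE_\triangle$ with distinct positions, so the iteration continues. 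Since each step removes exactly two particles of opposite sign, $|\sum_{i\in\cN_t}b^i|$ is conserved, and the iteration stops precisely when all remaining particles share a sign. That happens after $m=\frac12(|\cN_0|-|\sum_{i\in\cN_0}b^i|)$ removals, which gives (i) and (ii).

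The resulting process is c\`adl\`ag, adapted, has nonincreasing $\cN_t$, and satisfies points 1 and 2 of Definition \ref{d:SDE} by construction. These verifications are routine but need care with the a.s.\ statements at each of the finitely many steps. The strong Markov property comes from pasting: each piece is strong Markov by Lemma \ref{l:exun:X}, the transitions at $\tau_k$ are deterministic functions of $X_{\tau_k-}$, and $\tau_k$ are stopping times. We would check the defining identity $\E_x[\1_A f(X_{\sigma+t})]=\E_x[\1_A\E_{X_\sigma}f(X_t)]$ by splitting on the events $\{\tau_k\le\sigma<\tau_{k+1}\}$ and applying the strong Markov property of the current piece, together with the iteration started from $X_{\tau_{k+1}}$.

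For uniqueness, take any solution $\tilde X$ in the sense of Definition \ref{d:SDE} and argue by induction on $k$. On $[\tau_k,\tilde\tau_{k+1})$ the remaining particles solve the SDE of point 1 with no removals. Hence this is an $\SC(b^{\cN_{\tau_k}})$-process up to $\tilde\tau_{k+1}$, and pathwise uniqueness in Lemma \ref{l:exun:X} identifies it with our piece. We must show $\tilde\tau_{k+1}$ equals the lifetime $\tau'$. It cannot be larger, because at the lifetime $X_{\tau'-}\in\partial\cE$ (Proposition \ref{p:cty:tau}), which contradicts $\tilde X_t\in\cE_\triangle$ unless a removal occurs. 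It cannot be smaller, because point 2 requires a coincidence $X^{i}_{\tilde\tau_{k+1}-}=X^{j}_{\tilde\tau_{k+1}-}$, which does not happen before the lifetime. The removed set is then forced. Point 2 only allows removing opposite-sign coincident pairs, and $\cE_\triangle$ forces every coincidence to be resolved. Since $X_{\tau'-}\in\cG_2$, the only option is to remove the unique colliding pair. I expect this identification of the removal time and set to be the main obstacle, mainly because the a.s.\ bookkeeping has to be carried through each of the finitely many steps. The analytic difficulty itself has already been absorbed by Theorem \ref{t:3plus}.
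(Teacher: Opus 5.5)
Your proposal is correct and follows essentially the same route as the paper. For existence, both iterate Lemma \ref{l:exun:X}, Proposition \ref{p:cty:tau} and Theorem \ref{t:3plus} over the collision times. For uniqueness, both identify each $\tau_k$ with the lifetime of the corresponding $\SC$-process and determine the removed pair via $\cG_2$. Your version adds detail the paper leaves as ``easy to verify'': the two-sided comparison between $\tilde\tau_{k+1}$ and the lifetime, and a sketch of pasting together the strong Markov property.
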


\begin{proof}[Proof of Theorem \ref{t:SDE}]
First we prove the existence. We will construct a solution $(X_t)_{t \ge 0}$ iteratively. By Lemma \ref{l:exun:X} and Proposition \ref{p:cty:tau} there exists a $\SC(b, X_0)$-process $(\tilde X_t)_{t \ge 0}$ driven by $(B_t)_{t \ge 0}$ with lifetime $\tilde \tau$ in the sense of Remark \ref{r:SC}. Since $\tilde X_t \in \cE$ for all $t < \tilde \tau$, we have $\cN(X_t) = \cN(X_0)$ for all $t < \tilde \tau$, and thus Lemma \ref{l:exun:X} and Proposition \ref{p:cty:tau} demonstrate that $(\tilde X_t)_{t \ge 0}$ satisfies all conditions in Definition \ref{d:SDE} on $[0,\tilde \tau)$. We set $X_t := \tilde X_t$ for all $t < \tilde \tau$. On the event $\{ \tilde \tau = \infty\}$, Theorem \ref{t:3plus} implies that $|\sum_{i=1}^N b^i | = N$, which completes the construction of $(X_t)_{t \ge 0}$. On the event $\{\tilde  \tau < \infty\}$, we have $ X_{\tilde \tau-} \notin \cE$. Then, Theorem \ref{t:3plus} guarantees that $|\sum_{i=1}^N b^i | < N$ and $ X_{\tilde \tau-} \in \cG_2$. In particular, the condition in point 2 of Definition \ref{d:SDE} is met with $\tau_1 = \tilde \tau$ and with a single pair $\{i,j\}$. Setting $X_t^i := X_t^i := \triangle$ for all $t \geq \tau_1$ and $X_{\tau_1}^k := X_{\tau_1-}^k$ for all $k \in \cN_{\tau_1}$, we observe that $( X_t)_{t \ge 0}$ satisfies all conditions in Definition \ref{d:SDE} on $[0, \tau_1]$. This completes the iteration step. For the sake of clarity, let us mention that the next iteration step continues the construction of the remaining part $((X_t^k)_{k \in \cN_{\tau_1}})_{t \in [\tau_1, \infty)}$ by considering the $\SC((b^k)_{k \in \cN_\tau}, (X_{\tau_1-}^k)_{k \in \cN_{\tau_1}})$-process $(\hat X_t)_{t \ge 0}$ driven by $((B_t^k - B_\tau^k)_{k \in \cN_{\tau_1}})_{t \ge 0}$. It is easy to verify that this iteration finishes in at most $N$ steps, and that the resulting process $(X_t)_{t \ge 0}$ satisfies the conditions in Definition \ref{d:SDE} and the two further conditions from Theorem \ref{t:SDE}.

Next we prove uniqueness. Let $(X_t)_{t \ge 0}$ be a solution with corresponding $\cN_t, \tau_k, m$ from  Definition \ref{d:SDE}. Then, $(X_t)_{t \ge 0}$ is a $\SC(b, X_0)$-process on $[0,\tau_1)$, and thus it is uniquely defined up to $\tau_1$. By point 2 of Definition \ref{d:SDE}, $X_{\tau_1-} \notin \cE$, and thus $\tau_1$ is the (uniquely defined) lifetime of the $\SC(b, X_0)$-process. Then, by Theorem \ref{t:3plus} and point 2, the two particles that jump at $\tau_1$ to $\triangle$ are uniquely determined. By point 1, the remaining particles continue to evolve from $\tau_1$ onwards by an $\SC$-process with data similar to the existence proof. Iterating this construction over the collision times $\tau_k$, we obtain the uniqueness of $(X_t)_{t \ge 0}$.
\end{proof}

\section*{Acknowledgements}

PvM has received financial support from JSPS KAKENHI Grant Number JP24K06843.


\begin{thebibliography}{vMMO26}

\bibitem[AMS11]{AmbrosioMaininiSerfaty11}
L.~Ambrosio, E.~Mainini, and S.~Serfaty.
\newblock Gradient flow of the {C}hapman--{R}ubinstein--{S}chatzman model for
  signed vortices.
\newblock In {\em Annales de l'Institut Henri Poincare (C) Non Linear
  Analysis}, volume 28(2), pages 217--246, 2011.

\bibitem[BOS07]{BethuelOrlandiSmets07}
F.~Bethuel, G.~Orlandi, and D.~Smets.
\newblock Dynamics of multiple degree {G}inzburg-{L}andau vortices.
\newblock {\em Communications in Mathematical Physics}, 272:229--261, 2007.

\bibitem[BS24]{BoursierSerfaty24ArXiv}
J.~Boursier and S.~Serfaty.
\newblock Dipole formation in the two-component plasma.
\newblock {\em arXiv:2410.01025}, 2024.

\bibitem[BS25]{BoursierSerfaty25ArXiv}
J.~Boursier and S.~Serfaty.
\newblock Multipole and {B}erezinskii--{K}osterlitz--{T}houless transitions in
  the two-component plasma.
\newblock {\em arXiv:2509.09449}, 2025.

\bibitem[CP16]{CattiauxPedeches16}
P.~Cattiaux and L.~P\'ed\`eches.
\newblock {The 2-D stochastic Keller-Segel particle model: existence and
  uniqueness}.
\newblock {\em Latin American Journal of Probability and Mathematical
  Statistics}, 13:447--463, 2016.

\bibitem[FJ17]{FournierJourdain17}
N.~Fournier and B.~Jourdain.
\newblock Stochastic particle approximation of the {K}eller--{S}egel equation
  and two-dimensional generalization of {B}essel processes.
\newblock {\em Annals of Applied Probability}, 27(5):2807--2861, 2017.

\bibitem[FT25]{FournierTardy25}
N.~Fournier and Y.~Tardy.
\newblock Collisions of the supercritical {K}eller--{S}egel particle system.
\newblock {\em Journal of the European Mathematical Society (EMS Publishing)},
  27(10), 2025.

\bibitem[KS98]{KaratzasShreve98}
I.~Karatzas and S.~E. Shreve.
\newblock {\em Brownian Motion and Stochastic Calculus}.
\newblock Springer, New York, 2nd edition, 1998.

\bibitem[Lew22]{Lewin22}
M.~Lewin.
\newblock Coulomb and {R}iesz gases: {T}he known and the unknown.
\newblock {\em Journal of Mathematical Physics}, 63(6), 2022.

\bibitem[LSZ17]{LebleSerfatyZeitouni17}
T.~Lebl{\'e}, S.~Serfaty, and O.~Zeitouni.
\newblock Large deviations for the two-dimensional two-component plasma.
\newblock {\em Communications in Mathematical Physics}, 350(1):301--360, 2017.

\bibitem[LY16]{LiuYang16}
J.-G. Liu and R.~Yang.
\newblock Propagation of chaos for large {B}rownian particle system with
  {C}oulomb interaction.
\newblock {\em Research in the Mathematical Sciences}, 3:1--33, 2016.

\bibitem[MZ05]{MasmoudiZhang05}
N.~Masmoudi and P.~Zhang.
\newblock Global solutions to vortex density equations arising from
  sup-conductivity.
\newblock {\em Annales de l'Institut Henri Poincar\'e}, 22(4):441--458, 2005.

\bibitem[RY99]{RevuzYor99}
D.~Revuz and M.~Yor.
\newblock {\em Continuous Martingales and Brownian Motion}.
\newblock Springer-Verlag Berlin Heidelberg, 1999.

\bibitem[SBO07]{SmetsBethuelOrlandi07}
D.~Smets, F.~Bethuel, and G.~Orlandi.
\newblock Quantization and motion law for {G}inzburg--{L}andau vortices.
\newblock {\em Archive for Rational Mechanics and Analysis}, 183(2):315--370,
  2007.

\bibitem[Tar24]{Tardy24}
Y.~Tardy.
\newblock Weak convergence of the empirical measure for the {K}eller--{S}egel
  model in both subcritical and critical cases.
\newblock {\em Electronic Journal of Probability}, 29:1--35, 2024.

\bibitem[vMMO26]{VanMeursMurokawaOshikawaXX}
P.~van Meurs, S.~Murokawa, and I.~Oshikawa.
\newblock Ill-posedness of discrete screw dislocation dynamics.
\newblock {\em In preparation}, 2026.

\bibitem[vMPP22]{VanMeursPeletierPozar22}
P.~{v}an Meurs, M.~A. Peletier, and N.~Po{\v{z}}{\'a}r.
\newblock Discrete-to-continuum convergence of charged particles in 1{D} with
  annihilation.
\newblock {\em Archive for Rational Mechanics and Analysis}, 246(1):241--297,
  2022.

\bibitem[vMPS25]{VanMeursPeletierSlangen25}
P.~van Meurs, M.~A. Peletier, and T.~Slangen.
\newblock Global existence and mean--field limit for a stochastic interacting
  particle system of signed {C}oulomb charges.
\newblock {\em Potential Analysis}, 63(4):1699--1733, 2025.

\end{thebibliography}
\end{document}